\pgfplotsset{compat=newest}
\pgfplotsset{plot coordinates/math parser=false}
\definecolor{darkgreen}{rgb}{0.1,0.5,0.1}
\definecolor{darkblue}{rgb}{0.1,0.1,0.9}
\newcommand{\spinv}{\mathsf{spinv}}
\newcommand{\E}{\mathbb{E}}
\newcommand{\ginvset}{\mathcal{G}}
\newcommand{\T}{\top}
\newcommand{\ginv}[2]{\ensuremath{\mathsf{ginv}_{#1}(#2)}}
\newcommand{\ind}[1]{\mathbb{I}_{#1}}
\newcommand{\proj}{\ensuremath{\mathsf{proj}}}
\newcommand{\prob}{\mathbb{P}}
\newcommand{\dist}{\mathsf{dist}}
\newcommand{\erfc}{\mathrm{erfc}}
\newcommand{\setS}{{\cal S}}
\newcommand{\var}{\mathsf{Var}}
\DeclareMathOperator*{\argmin}{arg\,min}
\definecolor{shade}{rgb}{1,1,1}
\newmdtheoremenv [backgroundcolor=shade, %
innertopmargin = -4pt , %
innerbottommargin =2pt , %
innerleftmargin = 1pt , %
innerrightmargin = 1pt, %
splittopskip = \topskip, %
skipbelow= 6pt, %
skipabove=6pt, %
topline=false,bottomline=false,leftline=false,rightline=false,]{example}{Example}[section]
\newmdtheoremenv [backgroundcolor=shade, %
innertopmargin = -4pt , %
innerbottommargin =2pt , %
innerleftmargin = 1pt , %
innerrightmargin = 1pt, %
splittopskip = \topskip, %
skipbelow= 6pt, %
skipabove=6pt, %
topline=false,bottomline=false,leftline=false,rightline=false,]{remark}{Remark}[section]
\providecommand{\cref}[1]{Chapter~\ref{chap:#1}}
\providecommand{\R}{\ensuremath{\mathbb{R}}}
\providecommand{\C}{\ensuremath{\mathbb{C}}}
\providecommand{\N}{\ensuremath{\mathbb{N}}}
\providecommand{\abs}[1]{\left|#1\right|}
\providecommand{\norm}[1]{\left\lVert#1\right\rVert}
\providecommand{\set}[1]{\left\lbrace#1\right\rbrace}
\providecommand{\bydef}{\overset{\mathrm{def}}{=}}
\providecommand{\parder}[2]{{\partial{#1} \over \partial{#2}}}
\providecommand{\parderr}[2]{{\partial^2{#1} \over \partial{#2}^2}}
\providecommand{\di}{\ensuremath{\text{d}}}
\providecommand{\e}{\ensuremath{\mathrm{e}}}
\renewcommand{\vec}[1]{\ensuremath{\mathbf{#1}}}
\providecommand{\mat}[1]{\ensuremath{\mathbf{#1}}}
\providecommand{\wt}[1]{\ensuremath{\widetilde{#1}}}
\providecommand{\mA}{\mat{A}}
\providecommand{\mI}{\mat{I}}
\providecommand{\mM}{\mat{M}}
\providecommand{\mW}{\mat{W}}
 \providecommand{\mG}{\mat{G}}
\providecommand{\mX}{\mat{X}}
 \providecommand{\vb}{\vec{b}}
\providecommand{\ve}{\vec{e}} 
\providecommand{\vg}{\vec{g}}
\providecommand{\vh}{\vec{h}} 
 \providecommand{\vp}{\vec{p}}
\providecommand{\vu}{\vec{u}} \providecommand{\vw}{\vec{w}}
\providecommand{\vx}{\vec{x}} \providecommand{\vy}{\vec{y}}
\providecommand{\vz}{\vec{z}} 
 \providecommand{\vzero}{\vec{0}}
 \providecommand{\vv}{\vec{v}}
\numberwithin{theorem}{section}
\newcommand{\inlineitem}[1][]{%
\ifnum\enit@type=\tw@
    {\descriptionlabel{#1}}
  \hspace{\labelsep}%
\else
  \ifnum\enit@type=\z@
       \refstepcounter{\@listctr}\fi
    \quad\@itemlabel\hspace{\labelsep}%
\fi}
\title{Concentration of the Frobenius Norm of \\Generalized Matrix Inverses}
\author{Ivan Dokmani\'c
\thanks{I. Dokmanic is with Dept of Electrical and Computer Engineering, University of Illinois at Urbana-Champaign} 
\and 
R\'emi Gribonval
\thanks{R. Gribonval is with Univ Rennes, Inria, CNRS, IRISA\protect\\ \noindent\makebox[\linewidth]{\rule{\textwidth}{0.4pt}}}
}
\date{}
\begin{document}

\maketitle
\normalem


\begin{abstract}
In many applications it is useful to replace the Moore-Penrose pseudoinverse (MPP) by a different generalized inverse with more favorable properties. We may want, for example, to have many zero entries, but without giving up too much of the stability of the MPP. One way to quantify stability is by how much the Frobenius norm of a generalized inverse exceeds that of the MPP. In this paper we derive finite-size concentration bounds for the Frobenius norm of $\ell^p$-minimal general inverses of iid Gaussian matrices, with $1 \leq p \leq 2$. For $p = 1$ we prove exponential concentration of the Frobenius norm of the sparse pseudoinverse; for $p = 2$, we get a similar concentration bound for the MPP. Our proof is based on the convex Gaussian min-max theorem, but unlike previous applications which give asymptotic results, we derive finite-size bounds.
\end{abstract}

\section{Introduction}
\label{sec:introduction}

Generalized inverses are matrices that have some properties of the usual inverse of a regular square matrix. We call\footnote{We only study real matrices.} $\mX \in \R^{n \times m}$ a generalized inverse of a matrix $\mA \in \R^{m \times n}$ if $\mA \mX \mA = \mA$, and write $\ginvset(\mA)$ for the set of all such matrices $\mX$. In inverse problems it is desirable to use generalized inverses with a small Frobenius norm because the Frobenius norm controls the output mean-squared error. In this paper, we study Frobenius norms of generalized inverses that are obtained by constrained minimization of a class of matrix norms. Concretely, we look at entrywise $\ell^p$ norms for $1 \leq p \leq 2$, including the Moore-Penrose pseudoinverse (MPP) for $p = 2$ and the \emph{sparse pseudoinverse} for $p = 1$.

For $\mA \in \R^{m \times n}$, $m < n$, $1 \leq p \leq \infty$, we define the $\ell^p$-minimal generalized inverse as\footnote{For $p=1$, $\ginv{p}{\mA}$ is a singleton except for a set of matrices of measure zero; a precise statement is given in Section \ref{sec:stability}.}
\begin{align*}
  \ginv{p}{\mA} &\bydef \argmin_{\mX} \ \norm{\vec{\mX}}_{p} \ \ \text{subject to} \ \ \mX \in \ginvset(\mA),
\end{align*}
with 
\(
  \norm{\mM}_p  = \big( \sum_{ij} \abs{m_{ij}}^p \big)^{1/p}.
\)
The MPP $\mA^\dag$ is obtained by minimizing the Frobenius norm for $p = 2$, thus $\norm{\ginv{p}{\mA}}_F \geq \norm{\mA^\dag}_F$ for any $p$. Computing $\ginv{p}{\mA}$ involves solving a convex program.

Our initial motivation for this work is the sparse pseudoinverse, $p = 1$, since applying a sparse pseudoinverse requires less operations than applying a full one \cite{Dokmanic:2013bo,Li:2013cx,Casazza:ev,Krahmer:2012cy}. The sparsest generalized inverse may be formulated as
\begin{equation}
    \label{eq:intro_l0}
    \ginv{0}{\mA} \bydef \argmin_{\mX}~\norm{\mX}_{0}\ \text{subject to} \ \mX \in \ginvset(\mA),
\end{equation}
where $\norm{\, \cdot \,}_{0}$ counts the total number of non-zero entries. The non-zero count gives the naive complexity of applying $\mX$ or its adjoint to a vector. We show in Section \ref{sec:stability} that for a generic $\mA$, \eqref{eq:intro_l0} has many solutions. While some of them are easy to compute,  they correspond to poorly conditioned matrices.

To recover uniqueness and improve conditioning of sparse pseudoinverses, it is natural to try and replace $\ell^0$ by the $\ell^{1}$ norm \cite{Donoho:2006ci}. Indeed, it was shown in \cite{beyondMP-partII} that $\ginv{1}{\mA}$ provides \emph{a} minimizer of the $\ell^{0}$ norm for almost all matrices, and that this minimizer is generically unique, motivating the notation $\spinv(\mA) \bydef \ginv{1}{\mA}$. Intuitively, an $m \times n$ matrix $\mA$ with $m \leq n$ is generically rank $m$, hence $\mA \mX = \mI_{m}$ is a system of $m^2$ independent linear equations. The matrix $\mX$ has $nm$ entries, leaving us with $nm - m^2$ degrees of freedom, which one hopes will correspond to $nm - m^2$ zero entries. The main advantage of $\spinv$ over $\ell^0$ minimization is that it yields unique, well-behaved matrices. 


\subsection{Our Contributions}

In Sections~\ref{sec:stability} and \ref{sec:proof-main-concentration} we prove an exponential concentration result for the Frobenius norm of $\ell^p$-minimal pseudoinverses for iid Gaussian matrices. Specializing to $p = 1$ in Corollary \ref{cor:P1}, we show that unlike simpler strategies that yield a sparsest generalized inverse, $\ell^1$ minimization produces a well-conditioned matrix; specializing to $p = 2$ in Corollary \ref{cor:P2}, we get new results for the Frobenius norm of the MPP. Unlike previous applications of the CGMT, we give finite-size concentration bounds rather than asymptotic ``in probability'' results.

\subsection{Prior Art}

There is a one-to-one correspondence between generalized inverses of full rank matrices and dual frames. Several earlier works \cite{Casazza:ev, Krahmer:2012cy, Li:2013cx} study existence  and explicit constructions of sparse frames and sparse dual frames. Krahmer, Kutyniok, and Lemvig \cite{Krahmer:2012cy} establish sharp bounds on the sparsity of dual frames, showing that generically, for $\mA \in \C^{m \times n}$, the sparsest dual has $mn - m^2$ zeros, while Li, Liu, and Mi \cite{Li:2013cx} provide better bounds on the sparsity of dual Gabor frames. They introduce the idea of using $\ell^1$ minimization to find these dual frames, and show that under certain conditions,  $\ell^1$ minimization yields the sparsest possible dual Gabor frame. Further examples of non-canonical dual frames are given by Perraudin et al., who use convex optimization to derive dual frames with good time-frequency localization \cite{Perraudin:2014we}.

Results on finite-size concentration bounds for norms of pseudoinverses are scarce, with one notable exception being an upper bound on the probability of large deviation for the MPP \cite{Halko:2011kg} (we obtain concentration bounds for a complementary regime). On the other hand, a number of results exist for square matrices \cite{Rudelson:2008gv,Vershynin:2014jv}. The sparse pseudoinverse was previously studied in \cite{Dokmanic:2013bo}, where it was shown empirically that the minimizer is indeed a sparse matrix, and that it can be used to speed up the resolution of certain inverse problems.

Our proof relies on the \emph{convex Gaussian min-max theorem} (CGMT) \cite{Oymak:2013vm,Thrampoulidis:2016vo,Thrampoulidis:2015vf} which was previously used to quantify performance of regularized M-estimators such as the lasso \cite{Oymak:2013vm,Thrampoulidis:2016vo,Thrampoulidis:2015vf}. Many technical ideas in \cite{Thrampoulidis:2016vo,Oymak:2013vm,Thrampoulidis:2015vf} have been developed in earlier works. The CGMT can be seen as a descendant of Gordon's Gaussian min-max theorem \cite{Gordon1985}. Rudelson and Vershynin \cite{RudelsonVershynin} first recognized that Gordon's result (more precisely, its consequence known as escape through a mesh \cite{Gordon1988escape}) is a useful theoretical device to study sparse regression. Ensuing papers by Stojni\'c \cite{Stojnic2009,Stojnic2013}, Chandrasekaran et al. \cite{Chandrasekaran:2012fs}, Amelunxen et al. \cite{Amelunxen2014}, Foygel and Mackey \cite{Foygel:2014iy}, and others, give sharper analyses and study more general settings. Their techniques percolated into the work by Thrampoulidis et al. \cite{Thrampoulidis:2016vo} which we primarily refer to.
  
\section{Frobenius Norms of Generalized Inverses}
\label{sec:stability}

In this section we state our main results and prepare the proof. We first need to clear a technicality: for some $\mA$, $\spinv(\mA)$ will have multiple minimizers. This is, however, rare, as we prove in \cite{beyondMP-partII}:
\begin{theorem} \label{th:spinvmsparse}
Assume that $\mA \in \R^{m \times n}$ has columns in general position, and that $\mA$ is in general position with respect to the canonical basis vectors $\ve_{1},\ldots,\ve_{m}$. Then the sparse pseudoinverse $\spinv(\mA)$ of $\mA$ contains a single matrix whose columns are all exactly $m$-sparse.
\end{theorem}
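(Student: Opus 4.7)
My plan is to exploit the separable structure of both the $\ell^1$ objective and the constraint $\mA\mX\mA = \mA$, reducing the theorem to $m$ independent basis pursuit instances. First, the hypothesis that the columns of $\mA$ are in general position implies in particular that any $m$ of them form a basis of $\R^m$, so $\mA$ has full row rank $m$. A direct argument (right-multiplying $(\mA\mX - \mI_m)\mA = \mathbf{0}$ by a right inverse of $\mA$, or equivalently using the trivial left null space of $\mA$) shows that the constraint $\mA\mX\mA = \mA$ is equivalent to $\mA\mX = \mI_m$. Writing $\mX = [\vx_1 \mid \cdots \mid \vx_m]$, the objective splits as $\|\vec(\mX)\|_1 = \sum_{i=1}^{m} \|\vx_i\|_1$ and the constraint splits as $\mA\vx_i = \ve_i$, so the problem reduces to
\[
\spinv(\mA) \ni \mX^\star = [\vx_1^\star \mid \cdots \mid \vx_m^\star], \qquad \vx_i^\star \in \argmin_{\mA\vx = \ve_i} \|\vx\|_1.
\]

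Next, for each fixed $i$ I would analyze the basis pursuit problem via its standard LP reformulation $\vx = \vx^+ - \vx^-$ with $\vx^\pm \ge \mathbf{0}$. An optimizer exists at a vertex of the feasible polyhedron, and such a vertex corresponds to a signed solution $\vx_i^\star$ whose support $S_i \subseteq \{1,\ldots,n\}$ has cardinality $|S_i| \le m$ and whose associated columns $\{\mA_{:,j}\}_{j \in S_i}$ are linearly independent. The general position hypothesis of $\mA$ with respect to the canonical vector $\ve_i$ rules out the degenerate case $|S_i| < m$, since otherwise $\ve_i$ would lie in the span of fewer than $m$ columns of $\mA$. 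Hence $|S_i| = m$ exactly, establishing the claimed sparsity.

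The main obstacle is uniqueness, which I would handle via a dual-certificate argument. Suppose $\vx_i^\star$ and $\vx_i'$ were two distinct minimizers with supports $S$ and $S'$. Optimality in basis pursuit yields a dual vector $\vh \in \R^m$ with $(\mA^\T\vh)_j = \sign(x^\star_{i,j})$ for $j \in S$ and $|(\mA^\T\vh)_j| \le 1$ for $j \notin S$. Since $|S| = m$ and any $m$ columns of $\mA$ are linearly independent, $\vh$ is uniquely determined by the sign pattern on $S$; a genericity argument using the fact that $\mA$ is in general position with respect to $\ve_i$ upgrades the inequality to the strict bound $|(\mA^\T\vh)_j| < 1$ for $j \notin S$. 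This strict dual certificate, combined with linear independence of $\{\mA_{:,j}\}_{j \in S}$, pins the support of every minimizer to $S$ and forces $\vx_i' = \vx_i^\star$. Assembling the $m$ unique $m$-sparse columns produces the single matrix in $\spinv(\mA)$ whose existence and sparsity the theorem asserts.
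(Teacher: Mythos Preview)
The paper does not actually prove Theorem~\ref{th:spinvmsparse}; it is quoted from the companion paper \cite{beyondMP-partII} (see the sentence immediately preceding the theorem: ``as we prove in \cite{beyondMP-partII}''). So there is no proof in this document to compare against. That said, your outline---decoupling over columns, identifying each column problem with basis pursuit, using an LP vertex to get support size at most $m$, and invoking general position to force exact $m$-sparsity---is precisely the standard route, and it is also the route used elsewhere in this paper (the column-decoupling in Section~\ref{sec:proof-main-concentration} relies on the same observation that $\mA\mX=\mI_m$ splits into $m$ independent systems).

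One point deserves tightening. Your uniqueness step asserts that ``a genericity argument using the fact that $\mA$ is in general position with respect to $\ve_i$ upgrades the inequality to the strict bound $|(\mA^\T\vh)_j|<1$ for $j\notin S$.'' As written this is circular: the phrase ``general position with respect to $\ve_i$'' is a hypothesis whose precise definition you have not stated, and the strict dual inequality is exactly what uniqueness hinges on. In \cite{beyondMP-partII} the authors give a concrete definition of this notion and verify that it is equivalent to (or implies) the Fuchs-type strict complementarity condition you need; without that definition in hand, your sentence is a placeholder rather than an argument. A cleaner way to phrase your sketch would be: (i) observe that for a vertex minimizer with support $S$ of size $m$, the dual certificate $\vh=(\mA_S^\T)^{-1}\sign(\vx^\star_{i,S})$ is uniquely determined; (ii) note that the event $\langle \va_j,\vh\rangle\in\{-1,+1\}$ for some $j\notin S$ is a nontrivial polynomial condition on the entries of $\mA$ once the sign pattern is fixed; (iii) take ``general position with respect to $\ve_i$'' to mean precisely that none of these finitely many polynomial conditions is satisfied. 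With that in place your argument is complete and matches what the companion paper does.
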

Operationally, this means that for almost all matrices with respect to the Lebesgue measure on $\R^{m \times n}$ we will have a unique $\spinv$, which simplifies the proofs.\footnote{A careful reader will notice that the assumptions of Theorem \ref{th:spinvmsparse} forbid many types of sparse matrices. Indeed, it is known that sparse frames can have sparser duals than generic frames \cite{Krahmer:2012cy,Li:2013cx}.}

\begin{figure}[t!]
\centering
\includegraphics[width=.7\linewidth]{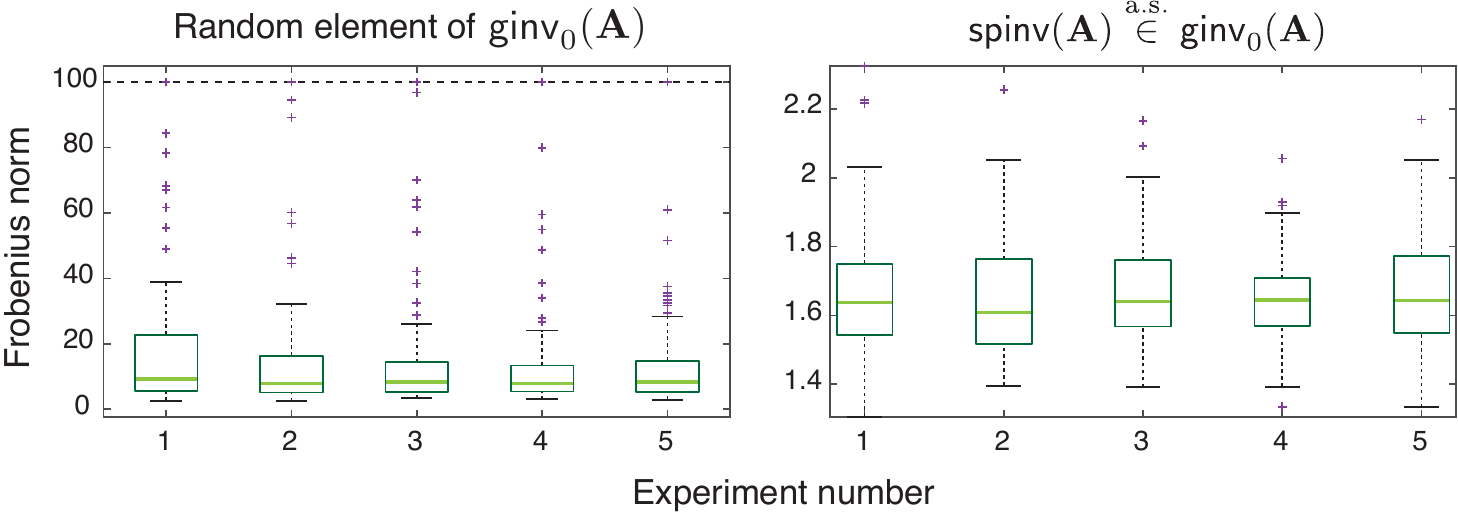}
\caption{Frobenius norm of a random element of $\ginv{0}{\mA}$ for $\mA$ a random Gaussian matrix of size $20 \times 30$ (left) and a sparse pseudoinverse $\spinv(\mA)$ (generically in $\ginv{0}{\mA}$; cf. \cite{beyondMP-partII}) of the same matrix (right). The random element of $\ginv{0}{\mA}$ is computed by selecting $m$ columns of $\mA$ at random and inverting the obtained submatrix. The plot shows results of 5 identical experiments, each consisting of generating a random $20 \times 30$ matrix and computing the two inverses 100 times. In the first experiment, the outlier norms extend to 2000 so they were clipped at 100. Green lines denote medians, and boxes denote the second and the third quartile.}
\label{fig:minor}
\end{figure}

The importance of the Frobenius norm of a generalized inverse can be motivated by considering an overdetermined system of linear equations $\vy = \mA^\T \vx + \vz$, where $\mA \in \C^{m \times n}$ has full row rank and $\vz$ is noise such that $\E \{ \vz \vz^\T \} \propto \mI_n$. Then for $\mW \in \ginvset(\mA)$ we have
\(
    \E_{\vz}[\norm{\mW \vy - \vx}_2^2] = \E_{\vz}[\norm{\mW \vz}_2^2] \, \propto\, \norm{\mW}_F^2. 
\)
Thus the mean-squared error is controlled by the Frobenius norm of $\mW$ and it is desirable to use generalized inverses with small Frobenius norms.

Note that for a rank-$m$ $\mA$, the optimization \eqref{eq:intro_l0} decouples over columns into $m$ independent $\ell^0$ problems. Even though such problems are in general NP-hard \cite{DMA97,natarajan95:_spars}, Theorem~\ref{th:spinvmsparse} says that for a generic $\mA \in \R^{m \times n}$ with $m < n$, a sparsest generalized inverse has $m^2$ non-zeros. Thus a simple way to compute \emph{a} sparsest generalized inverse (an element of $\ginv{0}{\mA}$) is to invert any $m \times m$ submatrix of $\mA$ and set the rest to zero. Unfortunately, this leads to poorly conditioned matrices, unlike computing the $\spinv$ as shown in Figure \ref{fig:minor}.

The goal of this section is to make the last statement quantitative by developing concentration results for the Frobenius norm of $\spinv(\mA)$ as well as $\ginv{p}{\mA}$ for $1 \leq p \leq 2$ when $\mA$ is an iid Gaussian random matrix. 

Our results rely on the properties of the following geometric functional:
\begin{eqnarray}
\label{eq:DefExpSquaredDistance}
D(t) = D_{p}(t;n) &\bydef& \tfrac{1}{n} \left[\E_{\vh \sim {\cal N}(0, \mI_n)} \,
\dist(\vh, \norm{\,\cdot\,}_{p^{*}} \leq t)\right]^{2}, \ \text{$1 \leq p \leq \infty$, $t \geq 0$},
\end{eqnarray}
where $\dist(\vh, \norm{\,\cdot\,}_{p^*} \leq t) \bydef \min_{\vu : \norm{\vu}_{p^*} \leq t} \norm{\vh - \vu}_2$ and $1/p + 1/p^* = 1$.
In particular, Lemma~\ref{lem:Dp_det_prop}--Property 7 in Appendix \ref{appendix:stability} tells us that $0 < D(t) < 1$ and for $0 < \delta < 1$, $\delta = D(t) -\tfrac{t}{2} D'(t)$ on $(0,\infty)$ has a unique solution on $(0, \infty)$, denoted $t^* = t^{*}_{p}(\delta, n)$. With this notation we can state our main result.

\begin{theorem}
    \label{thm:frob_of_l1}
    Let $\mA \in \R^{m \times n}$, $1 < m \leq n$, be a standard iid Gaussian matrix and $\delta \bydef (m-1) / n \in (0, 1)$. For $1 \leq p \leq 2$, define
    $t^* = t^{*}_{p}(\delta,n)$ to be the unique solution of $\delta = D(t) -\tfrac{t}{2} D'(t)$ on $(0,\infty)$ and denote 
    \[
    \alpha^* = \alpha^*_{p}(\delta, n) \bydef \sqrt{\frac{D(t^{*})}{\delta(\delta - D(t^{*}))}}. 
    \]
    Assume there exist\footnote{The existence of $\gamma(\delta)$ and $N(\delta)$ will be proved below for $p \in \{1,2\}$.} $\gamma(\delta)$ and $ N(\delta)$ such that $-t^{*}D'_{p}(t^{*};n) \geq \gamma(\delta) >0$ for all $n \geq N(\delta)$. Then for any $n \geq \max(2/(1-\delta),N(\delta))$ we have: for any $0<\epsilon \leq 1$,
    \begin{equation}
        \label{eq:stability_main_bound}
        \prob \left[ \abs{ \tfrac{n}{m}\norm{\ginv{p}{\mA}}_F^2 -  (\alpha^*)^2} \geq \epsilon(\alpha^*)^2 \right] \leq \   \frac{n}{C_{1}\epsilon}\e^{-C_{2}n\epsilon^{4}},
    \end{equation}
    where the constants $C_{1},C_{2}>0$ may depend on $\delta$ but not on $n$ or $\epsilon$.
\end{theorem}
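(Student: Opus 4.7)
The strategy is to combine (i) a column-wise reduction that turns the matrix problem into $m$ identically distributed vector problems, and (ii) a quantitative application of the convex Gaussian min-max theorem (CGMT) to each vector problem, with explicit finite-sample bounds. Since an iid Gaussian $\mA$ has full row rank almost surely, $\ginvset(\mA) = \{\mX : \mA\mX = \mI_m\}$ and the objective $\|\mX\|_p^p = \sum_{k=1}^m \|\vx_k\|_p^p$ separates across the columns $\vx_k$ of $\mX$. Thus the $k$th column of $\ginv{p}{\mA}$ is $\vx_k^* = \argmin \|\vx\|_p$ subject to $\mA\vx = \ve_k$, and $\|\ginv{p}{\mA}\|_F^2 = \sum_{k=1}^m \|\vx_k^*\|_2^2$. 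Left-rotational invariance of $\mA$ shows that each $\|\vx_k^*\|_2^2$ has the same marginal distribution, so it suffices to establish exponential concentration for a single column and then union-bound across $k = 1,\ldots, m$.

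For the per-column problem, I write the Lagrangian min-max
\[
\min_{\vx \in \R^n} \max_{\vy \in \R^m} \|\vx\|_p + \vy^\T(\ve_1 - \mA\vx),
\]
localize it to deterministic compact sets $\|\vx\|_2 \leq R_x$, $\|\vy\|_2 \leq R_y$ (justified by high-probability a-priori bounds on the primal/dual optima), and invoke the CGMT to replace the Gaussian bilinear term $-\vy^\T \mA\vx$ by $\|\vx\|_2\, \vg^\T\vy + \|\vy\|_2\, \vh^\T\vx$, with independent $\vg \sim \mathcal{N}(0, \mI_m)$ and $\vh \sim \mathcal{N}(0, \mI_n)$. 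Maximizing over the direction of $\vy$ collapses the $m$-dimensional dual variable to its magnitude $\beta\geq 0$ and produces a term $\|\ve_1 + \alpha\vg\|_2$ with $\alpha = \|\vx\|_2$. A Fenchel-type manipulation of the inner minimization in $\vx$ (via $\|\vx\|_p = \max_{\|\vz\|_{p^*} \leq 1} \vz^\T\vx$) then introduces the distance functional $\dist(\vh, \|\cdot\|_{p^*} \leq t)$, whose squared expectation is exactly $n\,D(t)$. The auxiliary problem therefore collapses to a two-scalar saddle point in $(\alpha, t)$ whose deterministic first-order conditions recover precisely $\delta = D(t) - (t/2)D'(t)$ and $\alpha^{*2} = D(t^*)/[\delta(\delta - D(t^*))]$.

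To obtain quantitative concentration, I use that $\vh \mapsto \dist(\vh, \|\cdot\|_{p^*} \leq t)$ is $1$-Lipschitz, so Gaussian concentration controls the deviation of $D(t)$ from its expectation at rate $\exp(-cn\epsilon^2)$ for a relative error $\epsilon$. The hypothesis $-t^*D'(t^*) \geq \gamma(\delta) > 0$ supplies strictly positive curvature of the scalar saddle equation at $t^*$, so a value-concentration at scale $\epsilon^2$ of the auxiliary problem implies a minimizer concentration at scale $\epsilon$ around $(\alpha^*, t^*)$. Finally, the high-probability direction of the CGMT \cite{Thrampoulidis:2016vo} transfers the auxiliary concentration to the primal: the restricted auxiliary problem over $\vx$ with $\|\vx\|_2^2/\alpha^{*2} \notin (1-\epsilon, 1+\epsilon)$ is shown to have strictly larger value than the unrestricted infimum, so $\vx_1^*$ lies in the $\epsilon$-band with high probability. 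A union bound over $m \leq n$ columns yields the $n/(C_1\epsilon)$ prefactor and preserves the exponential rate.

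The main obstacle is the \emph{finite-size} nature of the bound: previous CGMT arguments are asymptotic and do not track explicit constants. The $\epsilon^4$ exponent in \eqref{eq:stability_main_bound} is the compounded cost of (a) Gaussian concentration of the auxiliary value, which alone gives $\exp(-cn\epsilon^2)$ for value error $\epsilon$, and (b) the exchange $\epsilon^2 \leftrightarrow \epsilon$ between value and minimizer deviations mediated by the strong-convexity modulus controlled by $\gamma(\delta)$. Keeping the localization-step failure probability negligible also requires uniform control on $R_x, R_y$ as $n$ grows, which must be threaded through the constants. Establishing the assumed lower bound $-t^*_p(\delta,n)\,D'_p(t^*_p(\delta,n);n) \geq \gamma(\delta)$ for $p \in \{1,2\}$ is treated as a separate input to this theorem and is verified via case analysis of $D_p$ outside this proof.
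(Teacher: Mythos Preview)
Your proposal is correct and follows essentially the same route as the paper: column-wise decoupling, CGMT applied to a single column, reduction of the auxiliary optimization to two scalars, concentration of the auxiliary value via Lipschitzness of $\vh\mapsto\dist(\vh,\|\cdot\|_{p^*}\le t)$, transfer to the minimizer via strong convexity (which is where the hypothesis $-t^*D'_p(t^*)\ge\gamma(\delta)$ enters and where the $\epsilon^4$ arises), and finally a union bound over the $m$ columns.

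The one place where your write-up differs in presentation is the localization step. You propose to truncate the Lagrangian dual to $\|\vy\|_2\le R_y$ and justify this by ``high-probability a-priori bounds on the dual optima.'' The paper instead passes to the $\ell^2$-lasso $\min_{\vx}\|\mA\vx-\ve_1\|_2+\lambda\|\vx\|_p$, which automatically has dual variable in the unit ball, and then proves a separate lemma showing that for $\lambda$ below an explicit threshold (depending on $\sigma_{\min}(\mA)$ and the Lipschitz constant of $\|\cdot\|_p$) the lasso minimizer satisfies $\mA\vx=\ve_1$ exactly with high probability. These are equivalent---your truncated Lagrangian \emph{is} the lasso with $\lambda=1/R_y$---but the paper's formulation makes the required ``a-priori bound'' concrete and easy to verify. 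Similarly, the paper's two scalars are $(\alpha,\beta)$ with $\beta=\|\vu\|_2\in[0,1]$ and $t=\lambda/\beta$ appearing only in the analysis of the deterministic limit $\kappa(\alpha,\beta)$, rather than $(\alpha,t)$ directly; this is a change of variables and does not affect the argument.
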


From Theorem \ref{thm:frob_of_l1} we can derive more explicit results for the two most interesting cases: $p=1$ and $p=2$. For $p=2$ we get a result about $\norm{\mA^\dag}_F$ complementary to a known large deviation bound \cite[Proposition A.5; Theorem A.6]{Halko:2011kg} obtained by a different technique.
\begin{corollary}[$p=2$]\label{cor:P2}
  Let $\mA \in \R^{m \times n}$, $1 < m \leq n$, be a standard iid Gaussian matrix, $\delta = (m-1) / n \in (0, 1)$, and
  \(
  \alpha^* = \alpha^*_{2}(\delta, n) \bydef \sqrt{\frac{D_2(t^{*};n)}{\delta(\delta - D_2(t^{*};n))}},
  \)
  with $t^*$ being the unique solution of $\delta = D_2(t; n) -\tfrac{t}{2} D_2'(t; n)$ on $(0,\infty)$. Then there exists $N(\delta)$ such that for $n \geq N(\delta)$ we have: for any $0<\epsilon \leq 1$,
\begin{equation}
        \prob \left[ \abs{ \tfrac{n}{m}\norm{\mA^{\dagger}}_F^2 -  (\alpha^*)^2} \geq \epsilon(\alpha^*)^2 \right] \leq \   \frac{n}{C_{1}\epsilon}\e^{-C_{2}n\epsilon^{4}},
    \end{equation}
    where the constants $C_{1},C_{2}>0$ may depend on $\delta$ but not on $n$ or $\epsilon$, and $\alpha^{*}=\alpha^{*}_{2}(\delta;n)$ with
    \begin{equation}
    \lim_{n \to \infty} \alpha_{2}^{*}(\delta;n) = {1}/{\sqrt{1-\delta}}.
    \end{equation}
\end{corollary}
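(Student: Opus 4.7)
The plan is to apply Theorem~\ref{thm:frob_of_l1} with $p=2$: the concentration inequality in Corollary~\ref{cor:P2} then follows verbatim, provided we (a) verify the hypothesis $-t^{*}_{2}(\delta,n)\,D_{2}'(t^{*}_{2}(\delta,n);n)\geq \gamma(\delta)>0$ for all large $n$, and (b) identify $\lim_{n\to\infty}\alpha^{*}_{2}(\delta;n)$. Since $p=2$ gives $p^{*}=2$, the feasible set in \eqref{eq:DefExpSquaredDistance} is just the Euclidean ball of radius $t$, so for $\vh\sim\mathcal{N}(0,\mI_{n})$,
\[
\dist(\vh,\{\norm{\cdot}_{2}\leq t\})=(\norm{\vh}_{2}-t)_{+},\qquad D_{2}(t;n)=\tfrac{1}{n}\bigl[\E(\norm{\vh}_{2}-t)_{+}\bigr]^{2}.
\]
Differentiating under the expectation (justified by dominated convergence, since $(\norm{\vh}_{2}-t)_{+}$ is $1$-Lipschitz in $t$) yields
\[
D_{2}'(t;n)=-\tfrac{2}{n}\,\E(\norm{\vh}_{2}-t)_{+}\cdot\prob(\norm{\vh}_{2}>t).
\]

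Next I would carry out the asymptotic analysis in the natural scaling $s\bydef t/\sqrt{n}$. The chi-concentration inequality (Laurent--Massart, or Gaussian Lipschitz concentration applied to $\vh\mapsto\norm{\vh}_{2}$) gives $\norm{\vh}_{2}=\sqrt{n}+O_{\prob}(1)$, and $\E\norm{\vh}_{2}=\sqrt{n}(1-O(1/n))$. Hence, uniformly on compact $s$-intervals,
\[
D_{2}(t;n)\longrightarrow (1-s)_{+}^{2},\qquad \sqrt{n}\,D_{2}'(t;n)\longrightarrow -2(1-s)_{+}\,\mathbb{1}_{s<1},
\]
with explicit $1/\sqrt{n}$ rates controlled by the chi-concentration estimates. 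Substituting into the defining equation $\delta=D_{2}(t)-\tfrac{t}{2}D_{2}'(t)$ and using the scaling, the limiting equation reads
\[
\delta=(1-s)^{2}+s(1-s)=1-s,\qquad\text{so}\qquad s^{*}=1-\delta,
\]
and monotonicity of $D_{2}$ (guaranteed by Lemma~\ref{lem:Dp_det_prop}, Appendix~\ref{appendix:stability}) locates the solution $t^{*}_{2}(\delta;n)/\sqrt{n}\to 1-\delta$. Consequently $D_{2}(t^{*};n)\to\delta^{2}$, giving
\[
(\alpha^{*}_{2}(\delta;n))^{2}=\frac{D_{2}(t^{*};n)}{\delta(\delta-D_{2}(t^{*};n))}\longrightarrow\frac{\delta^{2}}{\delta\cdot\delta(1-\delta)}=\frac{1}{1-\delta}.
\]
At the same scale, $-t^{*}D_{2}'(t^{*};n)\to 2\delta(1-\delta)>0$ for every $\delta\in(0,1)$, so choosing, e.g., $\gamma(\delta)=\delta(1-\delta)$ and $N(\delta)$ large enough that the chi-concentration error terms are at most half of the limit value fulfils the hypothesis of Theorem~\ref{thm:frob_of_l1}, and the corollary follows.

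The main obstacle is turning the pointwise limits into the \emph{uniform-in-$n$} lower bound demanded by Theorem~\ref{thm:frob_of_l1}: one must track the finite-$n$ corrections in both $\E(\norm{\vh}_{2}-t)_{+}$ and $\prob(\norm{\vh}_{2}>t)$ near $t\asymp(1-\delta)\sqrt{n}$, and then propagate them through the implicit equation for $t^{*}$ to obtain a quantitative localization of the form $|t^{*}_{2}(\delta;n)/\sqrt{n}-(1-\delta)|\leq c(\delta)/\sqrt{n}$. Once this is in hand, the bound $-t^{*}D_{2}'(t^{*};n)\geq\delta(1-\delta)$ for $n\geq N(\delta)$ is immediate, and everything else is bookkeeping through Theorem~\ref{thm:frob_of_l1}.
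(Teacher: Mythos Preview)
Your proposal is correct and follows essentially the same route as the paper: both invoke Theorem~\ref{thm:frob_of_l1}, compute $D_{2}(t;n)=\tfrac{1}{n}\bigl[\E(\norm{\vh}_{2}-t)_{+}\bigr]^{2}$ explicitly, rescale via $\tau=t/\sqrt{n}$, and use concentration of $\norm{\vh}_{2}/\sqrt{n}$ around $1$ to obtain $t^{*}/\sqrt{n}\to 1-\delta$, $D_{2}(t^{*})\to\delta^{2}$, and positivity of $-t^{*}D_{2}'(t^{*})$ (the paper packages this as Lemma~\ref{le:boundp2}, whose proof is Step~8 of the combined argument in Appendix~\ref{appendix:stability}). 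Your identification of the remaining work---upgrading pointwise limits to an eventual uniform lower bound $\gamma(\delta)$---matches exactly what the paper does via explicit two-sided finite-$n$ estimates on $\E\{\norm{\vh}_{2}/\sqrt{n}-\tau\}_{+}$ and $\prob\{\norm{\vh}_{2}/\sqrt{n}>\tau\}$.
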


We remark that Corollary \ref{cor:P2} covers ``small'' deviations $(0<\epsilon \leq 1)$. In contrast, the result of \cite{Halko:2011kg} establishes\footnote{The results in \cite{Halko:2011kg} are designed for the nonasymptotic regime where the matrix is essentially square.} that $\mathbb{E}\ \tfrac{n}{m}\norm{\mA^{\dagger}}_F^2 = \tfrac{n}{n-m-1} = \tfrac{1}{1-\delta}$ and that for any $\tau \geq 1$,
\[
\prob \left[ \tfrac{n}{m}\norm{\mA^{\dagger}}_F^2 \geq \tfrac{12 n}{n-m} \tau\right] \leq 4 \tau^{-(n-m)/4} = 4 e^{-n \tfrac{(1-m/n) \log \tau}{4}}.
\]
For large $n$ we have $\tfrac{12 n}{n-m} \tau \approx \tfrac{12\tau}{1-\delta} \approx 12\tau (\alpha^{*}_{2})^{2}$ and $\tfrac{(1-m/n) \log \tau}{4} \approx \tfrac{(1-\delta) \log \tau}{4}$, hence this provides a bound for $1+\epsilon := 12 \tau \geq 12$ with an exponent $\log \tau$, of the order of $\log \epsilon$ (instead of $\epsilon^{4}$ we get for $0<\epsilon \leq 1$). Further, we also show that the probability of $\tfrac{n}{m}\norm{\mA^\dag}_F^2$ being much smaller than $(\alpha^*)^2$ is exponentially small.

The most interesting corollary is for the sparse pseudoinverse, $p = 1$. 

\begin{corollary}[$p=1$]\label{cor:P1}
With the notation analogous to Corollary \ref{cor:P2}, there exists $N(\delta)$ such that for all $n \geq N(\delta)$ we have: for any $0<\epsilon \leq 1$,
\begin{equation}
        \prob \left[ \abs{ \tfrac{n}{m}\norm{\spinv(\mA)}_F^2 -  (\alpha^*)^2} \geq \epsilon(\alpha^*)^2 \right] \leq \     \frac{n}{C_{1}\epsilon}\e^{-C_{2}n\epsilon^{4}},
    \end{equation}
    where the constants $C_{1},C_{2}>0$ may depend on $\delta$ but not on $n$ or $\epsilon$, and $\alpha^{*}=\alpha^{*}_{1}(\delta;n)$ with
    \begin{equation}
    \label{eq:limiting-astar-spinv}
    \lim_{n \to \infty} \alpha_{1}^{*}(\delta;n)
    = \sqrt{\bigg( \sqrt{\tfrac{2}{\pi}} \e^{-\tfrac{(t_1^*)^2}{2}} t^{*}_{1} - \delta(t_1^*)^2 \bigg)^{-1} -\tfrac{1}{\delta}}
    \end{equation}
    and $t^*_{1} = \sqrt{2} \cdot \erfc^{-1}(\delta)$.
\end{corollary}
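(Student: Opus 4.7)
The strategy is to specialize \thref{frob_of_l1} to $p=1$, which requires verifying the hypothesis $-t^{*}D_{1}'(t^{*};n) \geq \gamma(\delta) > 0$ for sufficiently large $n$, and then computing the large-$n$ limit of $\alpha_{1}^{*}(\delta,n)$. Since $p^{*}=\infty$, the distance to the $\ell^{\infty}$-ball decouples coordinatewise: $\dist^{2}(\vh,\norm{\cdot}_{\infty} \leq t) = \sum_{i}(|h_{i}|-t)_{+}^{2}$. A standard Lipschitz-concentration argument (each summand is $1$-Lipschitz in $h_{i}$) then yields the pointwise limit
\[
D_{1}(t;n) \;\xrightarrow[n\to\infty]{}\; D_{\infty}(t) \bydef 2\int_{t}^{\infty}(x-t)^{2}\phi(x)\,\di x,
\]
together with an analogous limit $D_{1}'(t;n) \to D_{\infty}'(t) = -2\E[(|g|-t)_{+}]$ obtained by direct differentiation. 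I would upgrade both to locally uniform convergence on compact subsets of $(0,\infty)$.

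A short integration by parts then gives
\[
D_{\infty}(t) - \tfrac{t}{2}D_{\infty}'(t) \;=\; 2\int_{t}^{\infty}(x-t)\,x\,\phi(x)\,\di x \;=\; 2(1-\Phi(t)) \;=\; \erfc\bigl(t/\sqrt{2}\bigr),
\]
so the limiting equation $\delta = D_{\infty}(t) - \tfrac{t}{2}D_{\infty}'(t)$ has the unique root $t_{1}^{*} = \sqrt{2}\,\erfc^{-1}(\delta)$ on $(0,\infty)$, matching the claim. Combining the strict monotonicity from \lref{Dp_det_prop}--Property~7 with the locally uniform convergence yields $t_{1}^{*}(\delta,n) \to t_{1}^{*}$.

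For the hypothesis, I would use the trivial rearrangement $-tD_{\infty}'(t) = 2\bigl[(D_{\infty}(t)-\tfrac{t}{2}D_{\infty}'(t)) - D_{\infty}(t)\bigr]$, which at $t=t_{1}^{*}$, after expanding $D_{\infty}(t_{1}^{*}) = \delta(1+(t_{1}^{*})^{2}) - 2t_{1}^{*}\phi(t_{1}^{*})$ and substituting $\erfc(t_{1}^{*}/\sqrt{2})=\delta$, evaluates to
\[
-t_{1}^{*}D_{\infty}'(t_{1}^{*}) \;=\; 2\bigl(\delta - D_{\infty}(t_{1}^{*})\bigr) \;=\; 2\bigl(\sqrt{\tfrac{2}{\pi}}\,t_{1}^{*}\,\e^{-(t_{1}^{*})^{2}/2} - \delta\,(t_{1}^{*})^{2}\bigr).
\]
This is strictly positive because the inequality $t\,\erfc(t/\sqrt{2}) < \sqrt{2/\pi}\,\e^{-t^{2}/2}$ for $t>0$ is the classical Mills-ratio bound for Gaussian tails. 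Locally uniform convergence then gives $-t_{1}^{*}(\delta,n)\,D_{1}'(t_{1}^{*}(\delta,n);n) \to 2(\delta - D_{\infty}(t_{1}^{*})) > 0$, so taking $\gamma(\delta) := \delta - D_{\infty}(t_{1}^{*})$ with a suitable $N(\delta)$ fulfills the hypothesis of \thref{frob_of_l1}.

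Applying \thref{frob_of_l1} then delivers the stated concentration inequality. The limiting value follows from the algebraic rearrangement
\[
(\alpha_{1}^{*})^{2} \;=\; \frac{D_{\infty}(t_{1}^{*})}{\delta(\delta - D_{\infty}(t_{1}^{*}))} \;=\; \frac{1}{\delta - D_{\infty}(t_{1}^{*})} - \frac{1}{\delta},
\]
combined with the expression for $\delta - D_{\infty}(t_{1}^{*})$ computed above, giving \eqref{eq:limiting-astar-spinv}. The main obstacle is the quantitative uniform control in $n$ of $D_{1}$ and $D_{1}'$ near $t_{1}^{*}$, needed to transfer the strict positivity of $-t_{1}^{*}D_{\infty}'(t_{1}^{*})$ to all finite $n \geq N(\delta)$ and to ensure the convergence $t_{1}^{*}(\delta,n) \to t_{1}^{*}$; the remaining steps are explicit Gaussian integrals and the Mills-ratio inequality.
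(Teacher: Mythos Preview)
Your proposal is correct and follows the same overall strategy as the paper: specialize \thref{frob_of_l1} to $p=1$ by verifying the hypothesis $-t^{*}D_{1}'(t^{*};n)\geq\gamma(\delta)>0$ and computing the limit of $\alpha_{1}^{*}$. Your limiting function $D_{\infty}$ is exactly the paper's $\theta(t)=\E(|h|-t)_{+}^{2}$, and your integration-by-parts identity $\theta(t)-\tfrac{t}{2}\theta'(t)=\erfc(t/\sqrt{2})$ is the content of \lref{theta_explicit}.

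The one technical difference is in how the finite-$n$ lower bound on $-t^{*}D_{1}'(t^{*};n)$ is obtained. You argue by convergence: show $-t_{1}^{*}(\delta,n)\,D_{1}'(t_{1}^{*}(\delta,n);n)\to -t_{1}^{*}\theta'(t_{1}^{*})>0$ via locally uniform convergence of $D_{1}(\cdot;n)$ and its derivative, then pick $N(\delta)$ large enough. The paper instead uses a direct convexity trick at finite $n$: from $D_{1}(2t^{*})\geq D_{1}(t^{*})+t^{*}D_{1}'(t^{*})$ one gets $-t^{*}D_{1}'(t^{*};n)\geq D_{1}(t^{*};n)-D_{1}(2t^{*};n)$, and the quantitative sandwich $\theta(t)-1/n\leq D_{1}(t;n)\leq\theta(t)$ (from Jensen plus the Poincar\'e variance bound) turns this into $\theta(t^{*})-\theta(2t^{*})-1/n$, which is bounded below uniformly once $t^{*}(\delta;n)$ is trapped in a fixed compact interval (Lemma~\ref{le:genericboundTp}). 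This sidesteps the need to first prove $t_{1}^{*}(\delta,n)\to t_{1}^{*}$ and locally uniform convergence of derivatives before obtaining the finite-$n$ bound. Your route is equally valid but requires exactly the ``main obstacle'' you flag; the paper's route is more self-contained for that step. Your Mills-ratio argument for $\delta-\theta(t_{1}^{*})>0$ is a nice explicit check, though the paper gets this for free from \lref{Dp_det_prop}--Property~7.
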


\begin{proof}[Proof of Corollaries~\ref{cor:P2}--\ref{cor:P1}]
For $p \in \{1,2\}$, using Lemmata~\ref{le:boundp2}--\ref{le:boundp1}, we lower bound $-t^{*} D'_{p}(t^{*}) \geq \gamma(\delta)$ for all $n$ above some $N(\delta)$, and control $\lim_{n \to \infty} \alpha_{p}^{*}(\delta;n)$.
We conclude using Theorem~\ref{thm:frob_of_l1}.
\end{proof}

\subsection{Some remarks on the corollaries} 

Results of Corollaries \ref{cor:P2} and \ref{cor:P1} are illustrated\footnote{For reproducible research, code is available online at \url{https://github.com/doksa/altginv}.} in Figures \ref{fig:frobinv1} and \ref{fig:frobinv2}. Figure \ref{fig:frobinv1} shows the shape of the limiting $(\alpha_{p}^*)^{2}$ as a function of $\delta$, as well as empirical averages for different values of $n$ and $\delta$. As expected, the limiting values get closer to the empirical result as $n$ grows larger. In Figure \ref{fig:frobinv2} we also show the individual realizations for different combinations of $n$ and $\delta$. As predicted by the two corollaries, the variance of both $\tfrac{n}{m}\norm{\mA^\dag}_F^2$ and $\tfrac{n}{m}\norm{\spinv(\mA)}_F^2$ reduces with $n$. For larger values of $n$, all realizations are close to the limiting $(\alpha_{p}^*)^{2}$.

\begin{figure}[h!]
\centering
\includegraphics[width=.9\linewidth]{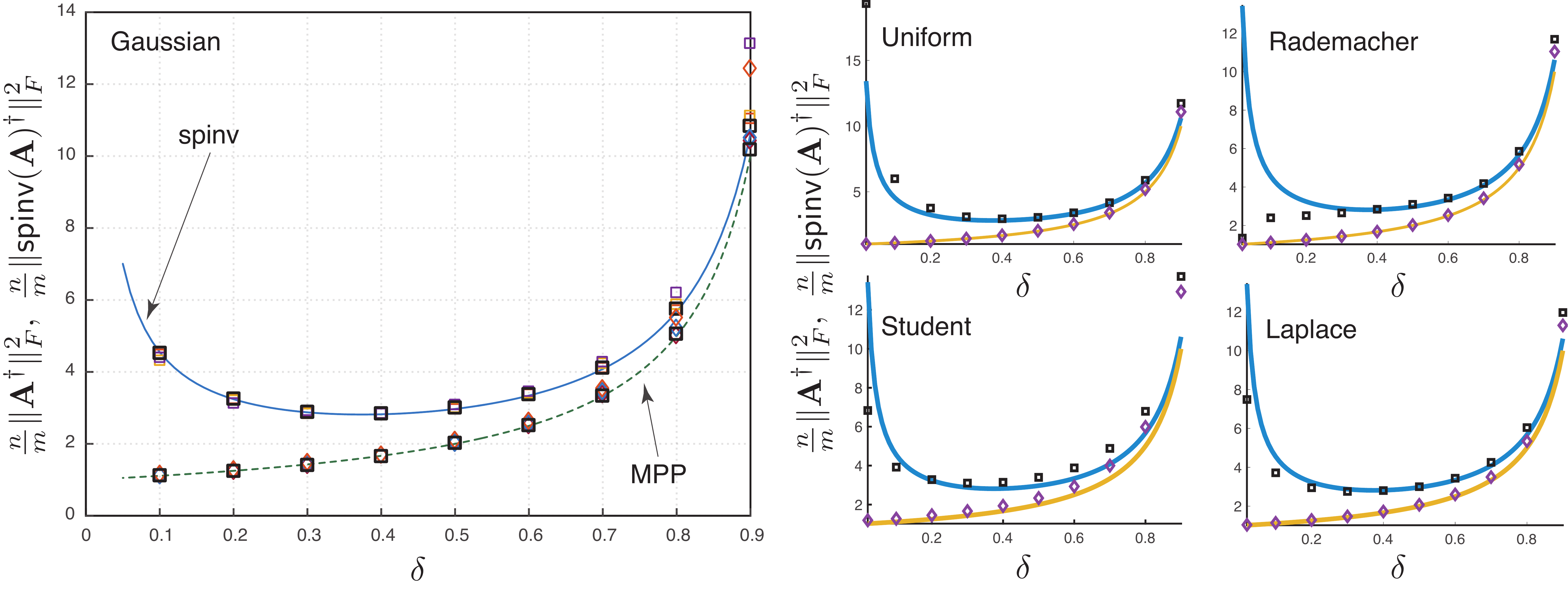}
\caption{Left: Comparison of the limiting $(\alpha_{p}^*)^2$ with the mean of $\tfrac{n}{m}\norm{\mA^\dag}_F^2$ and $\tfrac{n}{m}\norm{\spinv(\mA)}_F^2$ for 10 realizations of $\mA$. Empirical results are given for $\delta \in \set{0.1, 0.2, \ldots, 0.9}$ and $n \in \set{100, 200, 500, 1000}$. Black squares represent the empirical result for $n=1000$; colored squares represent the empirical mean of $\tfrac{n}{m}\norm{\mA^\dag}_F^2$ for $n \in \set{100, 200, 500}$, with the largest discrepancy (purple squares) for $n=100$; colored diamonds represent the empirical mean of $\tfrac{n}{m}\norm{\spinv(\mA)}_F^2$ with the largest discrepancy (orange diamonds) again for $n=100$. Right: Empirical averages over 100 trials for different matrix ensembles normalized to unit entry variance; $n = 200$ and $\delta \in \set{0.02, 0.1, 0.2, \ldots, 0.9}$.}
\label{fig:frobinv1}
\end{figure}

\begin{figure}[h!]
\centering
\includegraphics[width=0.9\linewidth]{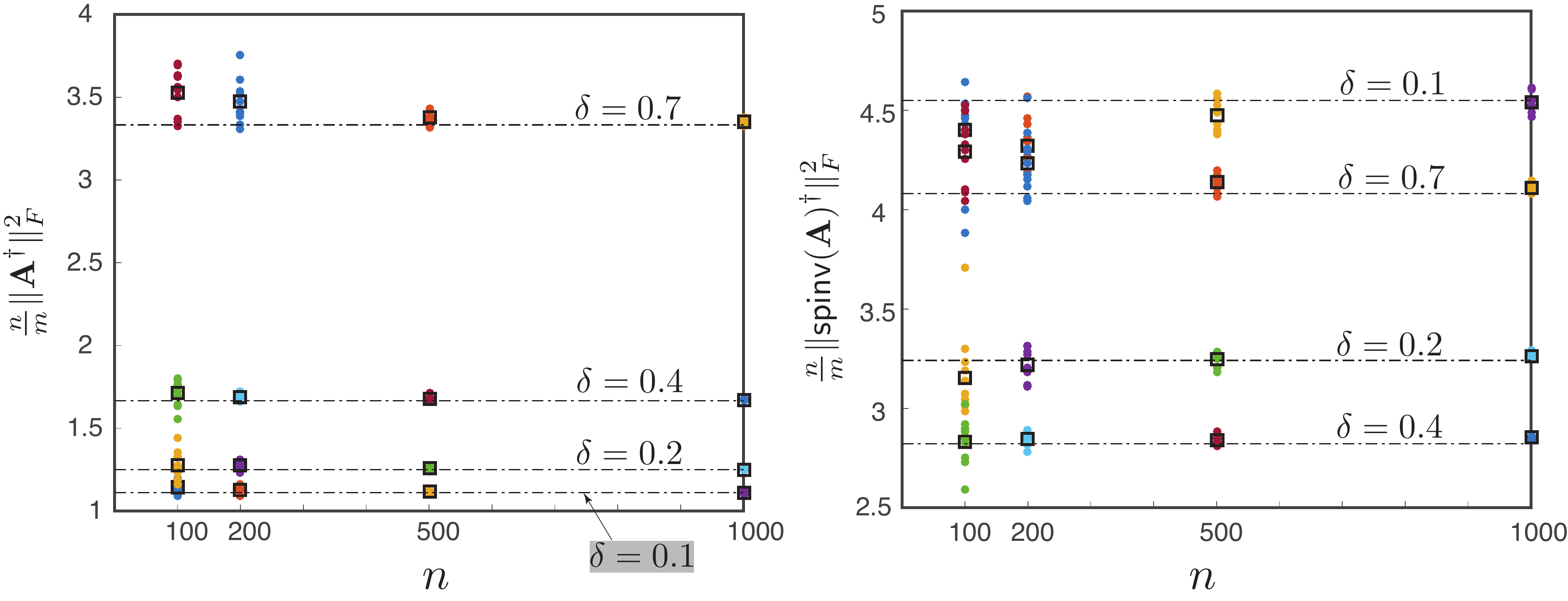}
\caption{Comparison of the limiting $(\alpha_{p}^*)^2$ with the value of $\tfrac{n}{m}\norm{\mA^\dag}_F^2$ (left) and $\tfrac{n}{m}\norm{\spinv(\mA)}_F^2$ (right) for 10 realizations of $\mA$. Results are shown for $\delta \in \set{0.1, 0.2, 0.4, 0.7}$ and different values of $n$. Values for individual realizations are shown with colored dots with different color for every combination of $n$ and $\delta$. Horizontal dashed lines indicate the limiting value for the considered values of $\delta$.}
\label{fig:frobinv2}
\end{figure}

The $\spinv$ and the MPP exhibit rather different qualitative behaviors. The Frobenius norm of the MPP monotonically decreases as $\delta$ gets smaller, while that of the $\spinv$ turns up below some critical $\delta$. Intuitively, for small $\delta$, the support of the $\spinv$ is concentrated on few entries which have to be comparably large to produce the diagonal in $\mI = \mA \mX$. A careful analysis of \eqref{eq:limiting-astar-spinv} using an asymptotic expansion of $\erfc(t)$ shows that for a sufficiently large $n$, $\alpha_1^*(\delta; n)$ behaves as $\left[\delta \log(1 / \delta)\right]^{-1/2}$ when $\delta$ is small.

The bound \eqref{eq:stability_main_bound} and the bounds in Corollaries \ref{cor:P1} and \ref{cor:P2} involve $\epsilon^4$ instead of the more common $\epsilon^2$ one gets for, e.g., Lipschitz functions: to guarantee a given probability in the right hand side of \eqref{eq:stability_main_bound}, $\epsilon$ should be of the order $n^{-1/4}$ instead of the usual $n^{-1/2}$, suggesting a comparably higher variance of $\tfrac{n}{m}\norm{\ginv{p}{\mA}}_F^2$. This is a consequence of the technique used to bound $\inf_{\abs{\alpha-\alpha^{*}} \geq \epsilon} \kappa(\alpha)-\kappa(\alpha^{*})$ in Lemma \ref{le:ArgMinKappa} which relies on strong convexity of $\kappa$. Whether a more refined analysis could lead to better error bars remains an open question.
    
Corollaries \ref{cor:P2} and \ref{cor:P1} prove that $\norm{\mA^{\dag}}_F^2$ and $\norm{\spinv(\mA)}_F^2$ indeed concentrate and give a closed-form limiting value of the optimal $\alpha^*$. It would seem natural that an interpolation to $p \in (1, 2)$ is possible, although $\alpha^*_{p}$ would be specified implicitly and computed numerically. It is less clear whether an extension to $p > 2$ is possible.

\subsection{A note on Gaussianity} Gaussian random matrices may appear as a serious restriction. It is known, however, that Gaussian matrices are a representative of a large class of random matrix models for which many relevant functionals are universal---they concentrate around the same value for a given matrix size \cite{Donoho4273,OymakTropp2015}.\footnote{We remark that universality of Gaussians is a general phenomenon that goes beyond examples in \cite{Donoho4273,OymakTropp2015}.} Although it is tempting to justify our model choice by universality, in the case of sparse pseudoinverses we must proceed with care. As Oymak and Tropp point out \cite{OymakTropp2015}, ``... geometric functionals involving non-Euclidean norms need not exhibit universality.'' They give an example of $\ell^1$ restricted minimum singular value. Indeed, as Figure \ref{fig:frobinv1} shows, while the predictions of Corollary \ref{cor:P2} for the MPP remain true over a number of ensembles and thus seem to be universal, those of Corollary \ref{cor:P1} for the $\spinv$ exhibit various levels of disagreement. For the Rademacher ensemble they collapse completely. In view of our results on pseudoinverses of structured matrices \cite{beyondMP-partI}, this comes as no surprise since we know that in this case the $\spinv$ contains the MPP. Still, from Figure \ref{fig:frobinv1}, results for Gaussian matrices are a good qualitative template for many absolutely continuous\footnote{With respect to the Lebesgue measure.} distributions, and the Gaussian assumption enables us to use sharp tools not available in a more general setting.

\subsection{A note on motivation} 

This work was originally motivated by a practical inverse problem in modern touchscreen technologies \cite{piot2015optical}. The idea is as follows: an array of light-emitting diodes (LEDs) injects light into a glass panel. Disturbances in the light propagation caused by fingers are detected by photo diodes placed next to the LEDs. With many source--detector pairs, reconstructing the multiple touch locations  becomes a tomographic problem. In order to meet the industry standards, the device must operate at a high refresh rate, yet it uses resource-constrained hardware. An obvious choice to solve the resulting over-constrained (due to coarse target resolution) tomographic problem is to apply the MPP of the forward matrix, but the refresh rate requirement precludes multiplication by a full matrix. This makes a sparse pseudoinverse attractive so long as it is stable, which translates to a controlled Frobenius norm. Even though the tomographic forward matrix is far from iid Gaussian (for example, it is sparse), it is interesting to compare it to the theoretical Gaussian results. As a toy model, we use a $15 \times 15$ pixel panel and randomly subsample the forward discrete Radon transform matrix to get a tall matrix of size $(225/\delta) \times 225$. We compute the ratio of the squared Frobenius norm of the $\mathsf{spinv}$ and the MPP and compare it to the ratio predicted by Corollaries \ref{cor:P2} and \ref{cor:P1} for Gaussian matrices. Results averaged over 10 realizations are shown in Table \ref{table:tomo}. While the two ratios are different, the Gaussian theory gives a good qualitative (and coarse quantitative) prediction.

\begin{table}[h!]
\caption{Ratio of squared Frobenius norms.}
\centering
\begin{tabular}{@{}lllllllll@{}}
\toprule
$\delta$ & 0.2 & 0.3 & 0.4 & 0.5 & 0.6 & 0.7 & 0.8 & 0.9 \\\midrule
Tomo & 3.56 & 2.73 & 2.26 & 1.98 & 1.74 & 1.55 & 1.45 & 1.32 \\
Gauss & 2.59 & 2.01 & 1.69 & 1.49 & 1.34 & 1.22 & 1.13 & 1.06 \\
\bottomrule
\end{tabular}
\label{table:tomo}
\end{table}

In general, we expect our results to be relevant whenever an application calls for multiplications by a precomputed, sparse pseudoinverse which is at the same time stable.

\section{Proof of the Main Concentration Result, Theorem \ref{thm:frob_of_l1}}
\label{sec:proof-main-concentration}

Our proof technique relies on decoupling the optimization for $\ginv{p}{\mA}$ over columns. A ``standard'' application of the CGMT would give an asymptotic result for the $\ell^p$ norm of one column which holds in probability. However, because the squared Frobenius norm is a sum of $m$ squared $\ell^2$ column norms, convergence in probability is not enough. To address this shortcoming we developed a number of technical results located mostly in Appendix \ref{appendix:stability} that lead to a stronger exponential concentration result which may be of independent interest to the users of the CGMT.

By definition, we have
\begin{equation*}
    \ginv{p}{\mA} = \argmin_{\mA \mX = \mI} \ \norm{\mX}_p = \argmin_{\mA \mX = \mI} \ \norm{\mX}_p^p,
\end{equation*}
where we assume that the solution is unique. This is true by strict convexity for $p > 1$; for $p = 1$ it holds almost surely by Theorem \ref{th:spinvmsparse}. This optimization decouples over columns of $\mX$: denoting $\mX^* = \ginv{p}{\mA}$ we have for the $i$th column that $\vx^*_i = \argmin_{\mA \vx = \ve_i} \norm{\vx}_p$. We can thus apply the following lemma proved in Section \ref{sub:proof_of_concentration}:
\begin{lemma}%
    \label{lem:concentration_column}%
    With notations and assumptions as in Theorem \ref{thm:frob_of_l1}, we have for $0<\epsilon' \leq 1$ and $n \geq \max(2/(1-\delta),N(\delta))$
    \[\prob
    \left\{ \abs{ \sqrt{n} \norm{\vx^*}_2 - \alpha^*} \geq \epsilon' \alpha^{*} \right\} \leq \
    \frac{1}{K_{1}\epsilon'} \e^{-K_{2} n \epsilon'^{4}},
    \]
    where $K_{1}, K_{2}>0$ may depend on $\delta$ but not on $n$ or $\epsilon'$.
\end{lemma}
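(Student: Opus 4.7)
The plan is to use the Convex Gaussian Min--Max Theorem (CGMT) following the strategy of Thrampoulidis et al., augmented with quantitative bookkeeping to obtain finite-size bounds. By column symmetry of the iid Gaussian ensemble, it suffices to analyze $\vx^{*} = \argmin_{\mA\vx = \ve_{1}}\norm{\vx}_{p}$. The cleanest way to capture $\sqrt{n}\norm{\vx^{*}}_{2}$ through a value-of-optimization argument is the ``constrained versus unconstrained minimum'' reduction:
\[
    \Phi(\mA) \bydef \min_{\mA\vx = \ve_{1}} \norm{\vx}_{p},\qquad \Phi_{\epsilon'}(\mA) \bydef \min_{\substack{\mA\vx = \ve_{1}\\ \sqrt{n}\norm{\vx}_{2} \notin [(1-\epsilon')\alpha^{*},(1+\epsilon')\alpha^{*}]}} \norm{\vx}_{p}.
\]
Since $\Phi_{\epsilon'}(\mA)=\Phi(\mA)$ exactly when $\sqrt n\norm{\vx^*}_2$ falls outside the $\epsilon'$-band, $\prob(\abs{\sqrt{n}\norm{\vx^{*}}_{2} - \alpha^{*}} \geq \epsilon'\alpha^{*}) \leq \prob(\Phi_{\epsilon'}(\mA) \leq \Phi(\mA))$. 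Both objectives are cast in the CGMT form $\min_{\vx}\max_{\vy}\vy^{\T}\mA\vx + f(\vx,\vy)$ by dualizing the linear constraint through $\vy \in \mathcal B_{R}\subset\R^{m}$ and enforcing a harmless upper bound $\norm{\vx}_{2}\leq M$, with $R,M$ depending only on $\delta$.

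Next, the CGMT replaces $\vy^{\T}\mA\vx$ by $\norm{\vx}_{2}\vg^{\T}\vy + \norm{\vy}_{2}\vh^{\T}\vx$ for $\vg\in\R^{m}$, $\vh\in\R^{n}$ independent standard Gaussians. Maximizing over $\vy\in\mathcal B_{R}$ turns the inner maximum into the geometric constraint $\vh^{\T}\vx + \norm{\norm{\vx}_{2}\vg - \ve_{1}}_{2} \leq 0$. Parameterizing $\vx = (\alpha/\sqrt n)\vu$ with $\norm{\vu}_{2}=1$, dualizing the objective via $\norm{\vx}_{p} = \sup_{\norm{\vw}_{p^{*}}\leq 1}\vw^{\T}\vx$ (with a Lagrange multiplier $t\geq 0$ for the radial constraint), and taking expectations, forces the squared Gaussian distance $[\dist(\vh,\norm{\cdot}_{p^{*}}\leq t)]^{2}=n D_{p}(t;n)$ to appear naturally. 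The resulting deterministic scalar objective $\kappa(\alpha)$ has stationarity condition $\delta = D_{p}(t;n) - \tfrac{t}{2}D_{p}'(t;n)$, so its unique minimizer is exactly the $\alpha^{*}$ of Theorem~\ref{thm:frob_of_l1}.

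The stochastic auxiliary values $\phi(\vg,\vh)$ and $\phi_{\epsilon'}(\vg,\vh)$ are, after normalization, $O(1/\sqrt n)$-Lipschitz functionals of the Gaussians, so Borell--TIS gives pointwise concentration $\prob(\abs{\phi/\sqrt n - \kappa(\alpha)} \geq s) \leq 2\e^{-c(\delta) n s^{2}}$ and similarly for $\phi_{\epsilon'}$, whose expected value is $\inf_{\abs{\alpha - \alpha^{*}}\geq \epsilon'\alpha^{*}}\kappa(\alpha)$. The key quantitative input is strong convexity of $\kappa$ around $\alpha^{*}$: under the hypothesis $-t^{*}D_{p}'(t^{*};n)\geq \gamma(\delta)>0$, the calculus carried out in Lemma~\ref{le:ArgMinKappa} yields $\inf_{\abs{\alpha-\alpha^{*}}\geq \epsilon'\alpha^{*}}\kappa(\alpha) - \kappa(\alpha^{*}) \geq c'(\delta)(\epsilon'\alpha^{*})^{2}$. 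A net in $\alpha$ over $[\alpha^{*}/2,\,2\alpha^{*}]$ of spacing $\sim\epsilon'/\sqrt n$ (with the complement handled by the coercivity of $\kappa$) upgrades the pointwise statement to uniform concentration and produces the $n/\epsilon'$ pre-factor.

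Applying the CGMT's probabilistic transfer to both $\Phi$ and $\Phi_{\epsilon'}$ and using the deterministic gap of order $(\epsilon'\alpha^{*})^{2}$ between their auxiliary limits, the event $\{\Phi_{\epsilon'}(\mA)\leq \Phi(\mA)\}$ has probability at most $(n/\epsilon')\e^{-K_{2}n(\epsilon')^{4}}$. The characteristic $(\epsilon')^{4}$ exponent is precisely the signature of combining $\e^{-ns^{2}}$ Gaussian concentration with quadratic strong convexity, since one needs $s\asymp (\epsilon')^{2}$. The principal difficulty is not any single hard step but uniform finite-$n$ bookkeeping: CGMT applications in the literature overwhelmingly produce asymptotic-in-probability statements, so every constant that enters the reduction (Lipschitz moduli of $\phi$ and $\kappa$, the dual-ball radius $R$, the net resolution, and the strong-convexity modulus) must be tracked and shown to depend only on $\delta$ once $n\geq \max\{2/(1-\delta),N(\delta)\}$. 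The threshold $2/(1-\delta)$ keeps the scalar parametrization non-degenerate, and $N(\delta)$ ensures $D_{p}(\cdot;n)$ is close enough to its asymptotic shape that the strong-convexity modulus does not collapse.
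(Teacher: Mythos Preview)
Your strategy is the same as the paper's---CGMT applied to a lasso/soft-constrained surrogate, scalar reduction to a convex--concave function whose deterministic limit is $\kappa$, strong convexity of $\kappa$ near $\alpha^{*}$ giving a quadratic gap $\omega(\epsilon)\asymp (\epsilon'\alpha^{*})^{2}$, and uniform concentration via a net---so the architecture is right and the $(\epsilon')^{4}$ exponent indeed comes out exactly as you say.

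There is, however, a real gap in the reduction step. You write that the equality constraint $\mA\vx=\ve_{1}$ is ``dualized through $\vy\in\mathcal B_{R}$'' and that this, together with the bound $\norm{\vx}_{2}\le M$, is ``harmless.'' It is not: restricting the multiplier to a ball replaces the hard constraint by a soft penalty $R\norm{\mA\vx-\ve_{1}}_{2}$, i.e.\ an $\ell^{2}$-lasso, and there is no a priori reason the lasso minimizer coincides with the equality-constrained one. The paper devotes a separate lemma to this (Lemma~\ref{lem:lasso_satisfies_equality}), which uses that the $\ell^{p}$ norm is $L$-Lipschitz and that $\sigma_{\min}(\mA^{\T})\gtrsim \sqrt{n}-\sqrt{m}$ with high probability; only for $\lambda<(\sqrt{n}-\sqrt{m})/L$ (equivalently, $R$ large enough in a \emph{specific} $\delta$-dependent range) do the two problems share minimizers. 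Your sketch does not supply this argument, and without it the CGMT is being applied to the wrong problem. Relatedly, your next sentence---``Maximizing over $\vy\in\mathcal B_{R}$ turns the inner maximum into the geometric constraint $\vh^{\T}\vx+\norm{\norm{\vx}_{2}\vg-\ve_{1}}_{2}\le 0$''---is inconsistent with a bounded dual: for $\vy$ in a ball the inner max yields the soft term $\beta\bigl(\norm{\norm{\vx}_{2}\vg-\ve_{1}}_{2}+\vh^{\T}\vx\bigr)$ with $\beta\in[0,R]$, not a hard constraint. The hard constraint arises only in the limit $R\to\infty$, which is precisely the regime the CGMT compactness hypothesis forbids. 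The paper therefore keeps the soft form and carries the extra scalar $\beta$ all the way through to the two-parameter function $\kappa(\alpha,\beta)$ before optimizing it out (Lemma~\ref{le:ArgMinKappa}). Finally, a minor bookkeeping point: the union over the net produces a $1/\epsilon'$ prefactor here, not $n/\epsilon'$; the extra factor of $n$ enters only later, in the union bound over columns that proves Theorem~\ref{thm:frob_of_l1} from the present lemma.
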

Lemma \ref{lem:concentration_column} tells us that $\sqrt{n} \norm{\vx^*_{i}}_2$ remains close to $\alpha^{*}$ with high probability. To exploit the additivity of the \emph{squared} Frobenius norms over columns we write $(n \norm{\vx^*}_2^2 - (\alpha^*)^2) = (\sqrt{n} \norm{\vx^*}_2 - \alpha^*)(\sqrt{n} \norm{\vx^*}_2 + \alpha^*)$. Then for any $b > 0$ we have that
\begin{align*}
    \prob \left\{ \abs{ n \norm{\vx^*}_2^2 - (\alpha^*)^2} \geq \epsilon (\alpha^*)^2 \right\}
    &  \leq \prob \left\{ \abs{ \sqrt{n} \norm{\vx^*}_2 - (\alpha^*)} \geq \epsilon (\alpha^*)^2 / b \right\} \\
    & \quad +  \prob \left\{ (\sqrt{n} \norm{\vx^*}_2 + \alpha^*) \geq b \right\}.
\end{align*}
By taking $b = 3 \alpha^*$, we bound both terms using Lemma \ref{lem:concentration_column}
to obtain
\[
    \prob \left\{ \abs{ n \norm{\vx^*}_2^2 - (\alpha^*)^2} \geq \epsilon (\alpha^*)^2 \right\}
    \leq
    \frac{3}{K_{1}\epsilon} \e^{-K_{2} n (\epsilon / 3)^4} + \frac{1}{K_{1}} \e^{-K_{2} n }
    \leq
    \frac{1}{C_{1}\epsilon} \e^{-C_2 n \epsilon^4}
\]
with an appropriate choice of $C_{1},C_{2}$.

This characterizes the squared $\ell^2$ norm of one column of the MPP. The (scaled) squared Frobenius norm of $\mX^*$ is a sum of $m$ such terms which are not independent,
\(
    \frac{n}{m} \norm{\mX^*}_F^2 = \frac{n}{m} \sum_{i=1}^m \norm{\vx^*_i}_2^2,
\)
and we want to show that it stays close to $(\alpha^*)^{2}$. We work as follows: 

\begin{eqnarray*}
    \prob\left\{\abs{\tfrac{n}{m}  \norm{\mX^*}_F^2 - (\alpha^*)^2} \geq \epsilon(\alpha^*)^2\right\}
    &=& \prob\left\{\abs{\sum_{i=1}^m n\norm{\vx_i^*}_2^2 - (\alpha^*)^2} \geq m \epsilon(\alpha^*)^2\right\}\\ 
    &\leq& \prob\left\{\sum_{i=1}^m \abs{n \norm{\vx_i^*}_2^2 - (\alpha^*)^2} \geq m \epsilon(\alpha^*)^2\right\} = (*).
\end{eqnarray*}
If the sum of $m$ terms is to be larger than $m \epsilon$, then at least one term must be larger than $\epsilon$,
\begin{equation*}
\begin{aligned}
    (*) \leq \prob \left\{ \, \exists \, i, 1 \leq i \leq m: \abs{n \norm{\vx_i^*}_2^2 - (\alpha^*)^2} \geq \epsilon(\alpha^*)^2 \right\}
    &= \prob \left\{ \bigcup_{i=1}^m \set{\abs{n \norm{\vx_i^*}_2^2 - (\alpha^*)^2} \geq \epsilon(\alpha^*)^2} \right\}\\
    &\leq \sum_{i=1}^m \prob\left\{ \abs{n \norm{\vx_i^*}_2^2 - (\alpha^*)^2} \geq \epsilon(\alpha^*)^2 \right\}\\
    & \leq m\frac{1}{C_{1}\epsilon} \e^{-C_{2}n\epsilon^{4}}
    \leq \frac{\delta n}{C_{1}\epsilon} \e^{-C_{2}n\epsilon^{4}},
\end{aligned}
\end{equation*}
which is exactly the statement of Theorem \ref{thm:frob_of_l1}.

\subsection{Proof of the Main Vector Result, Lemma \ref{lem:concentration_column}} 
\label{sub:proof_of_concentration}
Define
\begin{align}
    \label{eq:no_lasso}
    \vx^{*} \ & = \ \arg\min_{\mA \vx = \ve_1} \norm{\vx}_p, \\
    \label{eq:lasso}
    \wt{\vx} \ & = \ \arg\min_{\vx} \ \norm{\mA \vx - \ve_1}_2 + \lambda \norm{\vx}_p.
\end{align}
By Lemma \ref{lem:lasso_satisfies_equality}, since the $\ell^{p}$ norm is $L$-Lipschitz with respect to the $\ell^{2}$ norm (with $L \bydef n^{\max(1/p-1/2,0)}$), if we choose $\lambda \leq \frac{\sqrt{n} - \sqrt{m}}{L} (1 - \epsilon)$, minimizers $\vx^{*}$ and $\wt{\vx}$ of \eqref{eq:no_lasso} and \eqref{eq:lasso} coincide\footnote{An analogous result does not hold for the squared lasso (except for $\lambda = 0^+$).}  with probability at least $1 - \e^{-\epsilon^2 (\sqrt{n} - \sqrt{m})^2/2}$. Using $\norm{\vy}_2 = \max_{\vu:\norm{\vu}_2 \leq 1} \vu^\T \vy$ we get
\begin{equation}
 \label{eq:frob_variational}
 \wt{\vx} = \arg\min_{\vx} \max_{\vu:\norm{\vu}_2 \leq 1} \vu^\T \mA \vx + \lambda \norm{\vx}_p - \vu^\T \ve_1.
\end{equation}
The objective in \eqref{eq:frob_variational} is a sum of a bilinear term involving $\mA$ and a convex--concave function\footnote{Convex in the first argument, concave in the second one.} $\psi(\vx, \vu) = \lambda \norm{\vx}_p - \vu^\T \ve_{1}$ as required by the CGMT (Appendix \ref{append:CGMT}). The CGMT requires $\vx$ to belong to a compact set so instead of \eqref{eq:frob_variational} we analyze the following bounded modification:
\begin{equation}
 \label{eq:po_bounded}
 \wt{\vx}_{K} = \arg\min_{\vx : \norm{\vx}_2 \leq K} \max_{\vu:\norm{\vu}_2 \leq 1} \vu^\T \mA
 \vx + \lambda \norm{\vx}_p - \vu^\T \ve_1.
\end{equation}
We will see that $\wt{\vx}_{K} = \wt{\vx}$ with high probability as soon as $K$ is large enough.

Part \ref{thmpart:cgmt_optval_concentrate} of the CGMT says that the random optimal value of the \emph{principal} optimization \eqref{eq:po_bounded} concentrates if the random value of the following \emph{auxiliary} optimization concentrates:
\begin{equation}
 \label{eq:auxiliary}
 \hat{\vx}_{K} = \arg\min_{\vx : \norm{\vx}_2 \leq K} \max_{\vu:\norm{\vu}_2 \leq 1} \norm{\vx}_2 \vg^\T \vu - \norm{\vu}_2 \vh^\T \vx + \lambda \norm{\vx}_p - \vu^\T \ve_1,
\end{equation}
where $\vg \sim \mathcal(\vec{0}, \mI_m)$ and $\vh \sim \mathcal(\vec{0}, \mI_n)$ are independent. This lets us prove that if the norm $\norm{\hat{\vx}_{K}}_2$ of the optimizer of \eqref{eq:auxiliary} concentrates, then the norm $\norm{\wt{\vx}_{K}}_2$ of the optimizer of \eqref{eq:po_bounded} concentrates around the same value.\footnote{The CGMT contains a similar statement, albeit we need a different derivation to get exponential concentration.}

We now go through a series of steps to simplify \eqref{eq:auxiliary}. For convenience we let $\vz$ be a scaled version of $\vx$, $\vz = \vx \sqrt{n}$ (accordingly $A = K \sqrt{n}$). Using the variational characterization of the $\ell^p$ norm we rewrite \eqref{eq:auxiliary} as
\begin{equation}
 \label{eq:aux_dual}
\hat{\vz}_{A} =  \arg \min_{\vz : \norm{\vz}_2 \leq A} \max_{\substack{\beta: 0 \leq \beta \leq 1\\ \vu : \norm{\vu}_2 = \beta\\\vw : \norm{\vw}_{p^*} \leq 1}} {1}/{\sqrt{n}} \cdot \norm{\vz}_2 \vg^\T \vu - {1}/{\sqrt{n}} \cdot \norm{\vu}_2 \vh^\T \vz - \vu^\T \ve_1 + {\lambda}/{\sqrt{n}} \cdot \vw^\T \vz.
\end{equation}
Let $\check{\vz}_{A}$ be a (random) value of $\vz$ at the optimum of the following reordered optimization:
\begin{equation}
    \label{eq:aux_dual_swap}
    \max_{\substack{\beta : 0 \leq \beta \leq 1\\\vw:\norm{\vw}_{p^*} \leq 1}} \min_{\vz : \norm{\vz}_2 \leq A}  \max_{\vu:\norm{\vu}_2 = \beta} {1}/{\sqrt{n}} \cdot \norm{\vz}_2 \vg^\T \vu - {1}/{\sqrt{n}} \cdot \norm{\vu}_2 \vh^\T \vz 
- \vu^\T \ve_1 + {\lambda}/{\sqrt{n}} \cdot \vw^\T \vz.
\end{equation}
By Lemma \ref{lem:phi_eps_separation} and Lemma \ref{lem:minmaxswap}, $\norm{\check{\vz}_{A}}_2$ and $\norm{\hat{\vz}_{A}}_2$ stay close with high probability (this will be made precise below). We simplify \eqref{eq:aux_dual_swap} further as follows:

\begin{align}
\label{eq:aux_dual_swap_first_line}
\eqref{eq:aux_dual_swap} 
=
&\max_{\substack{\beta : 0 \leq \beta \leq 1\\\vw : \norm{\vw}_{p^*} \leq 1}} \min_{\vz : \norm{\vz}_2 \leq A}  \beta \norm{{\norm{\vz}_2 \vg}/{\sqrt{n}} - \ve_1}_2 - {\beta}/{\sqrt{n}} \cdot \vh^\T \vz + {\lambda}/{\sqrt{n}} \cdot \vw^\T \vz \\
\nonumber =
&\max_{\substack{\beta : 0 \leq \beta \leq 1\\\vw : \norm{\vw}_{p^*} \leq 1}} \min_{\alpha : 0 \leq \alpha \leq A} \min_{\vz:\norm{\vz}_2 = \alpha}  \beta \norm{{\norm{\vz}_2 \vg}/{\sqrt{n}} - \ve_1}_2 - {\beta}/{\sqrt{n}}\cdot \vh^\T \vz + {\lambda}/{\sqrt{n}}\cdot \vw^\T \vz \\
\nonumber =
&\max_{\substack{\beta : 0 \leq \beta \leq 1\\\vw : \norm{\vw}_{p^*} \leq 1}} \min_{\alpha : 0 \leq \alpha \leq A} \beta \norm{{\alpha \vg}/{\sqrt{n}} - \ve_1}_2 - {\alpha}/{\sqrt{n}} \cdot \norm{\beta \vh - \lambda \vw }_2.
\end{align}

The objective in the last line is convex in $\alpha$ and jointly concave in $(\beta, \vw)$, and the constraint sets are all convex and bounded, so by \cite[Corollary 3.3]{Sion:1958jm} we can again exchange $\min$ and $\max$:

\begin{align}
\nonumber = &\min_{\alpha : 0 \leq \alpha \leq A} \max_{\substack{\beta : 0 \leq \beta \leq 1\\\vw : \norm{\vw}_{p^*} \leq 1}}  \beta \norm{{\alpha \vg}/{\sqrt{n}} - \ve_1}_2 - {\alpha}/{\sqrt{n}} \cdot \norm{\beta \vh - \lambda \vw }_2 \\
\nonumber = &\min_{\alpha : 0 \leq \alpha \leq A} \max_{\beta : 0 \leq \beta \leq 1} \bigg( \beta \norm{{\alpha \vg} / {\sqrt{n}} - \ve_1}_2 -  {\alpha} / {\sqrt{n}} \cdot \min_{\vw : \norm{\vw}_{p^*} \leq 1} \norm{\beta \vh - \lambda \vw }_2 \bigg) \\
\label{eq:opt_two_scalars}
= &\min_{\alpha : 0 \leq \alpha \leq A} \max_{\beta : 0 \leq \beta \leq 1} \phi(\alpha,\beta;\vg,\vh),
\end{align}
where
\begin{equation}\label{eq:DefPhi2Scalars}
\phi(\alpha,\beta;\vg,\vh) \bydef  \beta \norm{{\alpha \vg}/{\sqrt{n}} - \ve_1}_2 - {\alpha}/{\sqrt{n}} \cdot \dist(\beta\vh, \norm{\,\cdot\,}_{p^*} \leq \lambda).
\end{equation}
We thus simplified a high-dimensional vector optimization~\eqref{eq:lasso} into an optimization over two scalars~\eqref{eq:opt_two_scalars}, one of these scalars, $\alpha$, almost giving us what we seek---the (scaled) $\ell^2$ norm of $\vx$. To put the pieces together, there now remains to formally prove that the min-max switches and the concentration results we mentioned actually hold.

\subsection{Combining the ingredients}

By Lemma~\ref{lem:concentration2}, $\phi(\alpha,\beta; \vg,\vh)$ concentrates around some deterministic function $\kappa(\alpha,\beta) = \kappa_{p}(\alpha,\beta;n,\delta,\lambda)$ with $\delta = (m-1)/n$. By Lemma~\ref{le:ArgMinKappa}--3, 
\begin{equation}
\argmin_{\alpha : 0 \leq \alpha \leq A} \max_{\beta: 0 \leq \beta \leq 1} \kappa(\alpha,\beta) 
= \alpha^{*} = \alpha^{*}_{p}(\delta;n) \bydef \sqrt{\frac{D_{p}(t^{*}_{p};n)}{\delta(\delta-D_{p}(t^{*}_{p};n))}}
\end{equation}
as soon as $A=K\sqrt{n}>\alpha^{*}$ and $\lambda \leq t^{*}$, with $t^{*}=t^{*}_{p}(\delta,n)$ defined in Lemma~\ref{lem:Dp_det_prop}--Property 7. By Lemma~\ref{le:empiricalminmax}, for $0<\epsilon\leq \max(\alpha^{*},A-\alpha^{*})$, the minimizer 
\[
\alpha^{*}_{\phi} \bydef \argmin_{0 \leq \alpha \leq A} \max_{0 \leq \beta \leq 1}\phi(\alpha, \beta; \vg, \vh)
\]
stays $\epsilon$-close to $\alpha^*$ with high probability:
\[
 \prob \{ |\alpha^{*}_{\phi}-\alpha^{*}| \geq \epsilon \} \leq \zeta\big(n,\tfrac{\omega(\epsilon)}{2}\big),
\]
with 
\(
\zeta(n,\xi) = \zeta(n,\xi;A,\delta) \bydef \tfrac{c_{1}}{\xi} \e^{-c_{2} \xi^{2}n \cdot c(\delta,A)},
\)
\(
   c(\delta,A) \bydef \tfrac{\min(\delta,A^{-2})}{1+\delta A^{2}},
\)
$c_{1},c_{2}$ universal constants, and
\[
\omega(\epsilon) = \omega_{p}(\epsilon;n,\delta,\lambda) \bydef \frac{\epsilon^2}{2} \frac{\lambda\delta/t^{*}}{(1 + \delta(\alpha^* + \epsilon)^2)^{3/2}}.
\]
As a consequence, by Lemma~\ref{lem:minmaxswap}, the scaled norm $\sqrt{n} \norm{ \wt{\vx}_{K}}_2$ of the minimizer of the (bounded) principal optimization problem \eqref{eq:po_bounded} stays close to $\alpha^*$ with high probability,
\[
        \prob \big\{ \abs{\sqrt{n} \norm{\wt{\vx}_K}_2 - \alpha^*} \geq \epsilon \big\}  \leq 4 \zeta\big(n, \tfrac{\omega(\epsilon)}{2}\big).
\]
Similarly, for $0<\epsilon\leq \min(\alpha^{*},A-\alpha^{*})$ by invoking Lemma~\ref{lemma:bounded_equals_unbounded} we have that the norm $\sqrt{n} \norm{ \wt{\vx}}$ of the minimizer of the unbounded optimization \eqref{eq:frob_variational} stays close to $\alpha^*$ with high probability
\[        
    \prob \big\{ \abs{\sqrt{n} \norm{\wt{\vx}}_2 - \alpha^*} \geq \epsilon \big\}  \leq 4 \zeta\big(n, \tfrac{\omega(\epsilon)}{2}\big),
\]
and in fact \(    \prob[\wt{\vx} \neq \wt{\vx}_{K}] \leq 4\zeta\big(n,\tfrac{\omega(\epsilon)}{2}\big) \).

Since the $\ell^{p}$ norm is $L$-Lipschitz with respect to the Euclidean metric in $\R^{n}$, with $L = n^{\max(1/p-1/2,0)}$, Lemma \ref{lem:lasso_satisfies_equality} gives for any  
\(
  \lambda < \lambda_{\max}(n,m,t^{*}) \bydef
  \min \set{\tfrac{\sqrt{n} - \sqrt{m}}{2 L}, t^{*}}
\) 
that the minimizer $\vx^{*}$ of the equality-constrained optimization \eqref{eq:no_lasso} coincides with the minimizer $\wt{\vx}$ of the lasso formulation \eqref{eq:lasso}--\eqref{eq:frob_variational} except with probability at most $\e^{-n(1 - \sqrt{\delta+1/n})^2/8}$.

Overall then, for $\lambda < \lambda_{\max}$ and $\epsilon \leq \min(\alpha^{*},A-\alpha^{*})$,
\begin{equation}
\label{eq:CombiningIngredientsMainProbBound}
 \prob \big\{\abs{\sqrt{n}\norm{\vx^{*}}_2-\alpha^{*}} \geq \epsilon\big\}  \leq \e^{-n(1 - \sqrt{\delta+1/n})^2/8} + 4\zeta\big(n,\tfrac{\omega(\epsilon)}{2}\big).
\end{equation}
The infimum over admissible values of $\lambda$ is obtained by taking its value when $\lambda = \lambda_{\max}$. 

\subsection{Making the bound~\eqref{eq:CombiningIngredientsMainProbBound} explicit}

From now on we choose $A \bydef 2\alpha^{*}$ and, for $0 < \epsilon' \leq 1$, we consider $\epsilon \bydef \epsilon' \alpha^{*}$ (which satisfies $0 < \epsilon  \leq \min(\alpha^{*},A-\alpha^{*}) = \alpha^{*}$). 
We use $\lesssim_{\delta}$ and $\gtrsim_{\delta}$ to denote inequalities up to a constant that may depend on $\delta$, but not on $n$ or $\epsilon'$, provided $n \geq N(\delta)$. We specify $N(\delta)$ where appropriate.

By Lemma~\ref{le:genericboundTp}--item~\ref{it:TMax}, for any $1 \leq p \leq 2$ and any $n \geq 1$,
\begin{eqnarray*}
\lambda_{\max} = t^{*}\ \min\left(\tfrac{\sqrt{n}-\sqrt{m}}{n^{1/p-1/2}}\tfrac{1}{2t^{*}},1\right)
&\gtrsim_{\delta}& t^{*}\ \min\left(\tfrac{\sqrt{n}(1-\sqrt{m/n})}{n^{1/p-1/2}}\tfrac{1}{n^{1-1/p}},1\right)
= t^{*} \min\left(1-\sqrt{m/n},1\right).
\end{eqnarray*}
For $n \geq \tfrac{2}{1-\delta}$ we have 
\(
1-\sqrt{m/n} = 1-\sqrt{\delta+\tfrac{1}{n}} \geq 1-\sqrt{(1+\delta)/2} \gtrsim_{\delta} 1,
\)
hence $\lambda_{\max} \gtrsim_{\delta} t^{*}$.

With the shorthands $D(t) = D_{p}(t;n)$ and $D = D_{p}(t^{*};n)$, we have
\[
1+\delta (\alpha^{*}+\epsilon)^{2} \leq 1+\delta A^{2} \leq 4(1+\delta (\alpha^{*})^{2}) = 4(1+\tfrac{D}{\delta-D})= \frac{4\delta}{\delta-D} \lesssim_{\delta} (\delta-D)^{-1},
\]
hence with $\lambda = \lambda_{\max}$ we get for $n \geq 2/(1-\delta)$,
\begin{eqnarray*}
\omega(\epsilon) 
& = &
\frac{\epsilon'^{2}}{2} \frac{\lambda_{\max}}{t^{*}} \frac{\delta (\alpha^{*})^{2}}{(1+\delta(\alpha^{*}+\epsilon)^{2})^{3/2}}
\gtrsim_{\delta} 
\epsilon'^{2} \frac{ \tfrac{D}{\delta-D}}{(\tfrac{4\delta}{\delta-D})^{3/2}}
\gtrsim_{\delta}
\epsilon'^{2}  D\sqrt{\delta-D} \ .
\end{eqnarray*}
By Lemma~\ref{le:genericboundTp}--\ref{it:lowerBoundDp} we have $D = D_{p}(t^{*};n) \geq (\delta/C)^{2}$ for a universal constant $C$ independent of $n$ or $p$, hence for $n \geq 1$, $D \gtrsim_{\delta} 1$ and for $n \geq 2/(1-\delta)$,
\[
  \omega(\epsilon) \gtrsim_{\delta} \sqrt{-t^{*}D'_{p}(t^{*};n)} \epsilon'^{2}.
\]
Moreover since $\min(\delta,A^{-2}) \geq \tfrac{1}{4}\min(\delta,(\alpha^{*})^{-2}) = \tfrac{1}{4}\min(\delta,\delta(\delta-D)/D)$ we also get
\begin{eqnarray*}
c(\delta,A) 
&= &
\tfrac{\min(\delta,A^{-2})}{1+\delta A^{2}} 
\geq
\tfrac{\min(\delta, \delta(\delta-D)/D)}{\delta/(\delta-D)}
=
\min(\delta-D, \tfrac{(\delta-D)^{2}}{D}) \geq (\delta-D)^{2} = \tfrac{1}{4} [-t^{*}D'_{p}(t^{*};n)]^{2}.
\end{eqnarray*}
Since $-t^{*} D'_{p}(t^{*};n) \geq \gamma(\delta) > 0$ for any $n \geq N(\delta)$ (recall that $t^{*}= t^{*}_{p}(\delta;n)$), we have
\begin{equation}\label{eq:MainAssumption}
-t^{*}D'_{p}(t^{*}) \gtrsim_{\delta} 1,
\end{equation}
and we obtain for $n \geq \max(2/(1-\delta),N)$: $\omega(\epsilon) \gtrsim_{\delta} \epsilon'^{2}$ and $c(\delta,A) \gtrsim_{\delta} 1$. Combining the above yields, for $0<\epsilon'\leq 1$, $n \geq \max(2/(1-\delta),N(\delta))$:
\begin{equation*}
    \prob \left\{ \abs{\norm{\vx^*} - \tfrac{\alpha^*}{\sqrt{n}}} \geq \tfrac{\epsilon'\alpha^{*}}{\sqrt{n}} \right\}
    \leq    \e^{-C_{1} n}
    + 4\zeta\big(n, C_{2} \epsilon'^{2}\big)
 \leq   
 \frac{1}{K_{1}(\epsilon')^{2}} \e^{-K_{2}(\epsilon')^{4} n} 
\end{equation*}
with $K_{i},C_{i} \gtrsim_{\delta} 1$. 


\section{Conclusion} 
\label{sec:conclusion}

We studied the concentration of the Frobenius norm of $\ell^p$-minimal pseudoinverses for iid Gaussian matrices. In addition to a general result for $1 \leq p \leq 2$, we gave explicit bounds for $p \in \set{1, 2}$, that is, for the sparse pseudoinverse and the Moore-Penrose pseudoinverse. Our results show that for a large range of $m / n$ the Frobenius norm of the $\spinv$ is close to the Frobenius norm of the MPP which is the best possible among all generalized inverses (Figure \ref{fig:frobinv1}). The same does not hold for the various ad hoc strategies that yield generalized inverses with the same non-zero count (Figure \ref{fig:minor}). In applications, this means that the $\spinv$ will not blow up noise much more than the MPP. Important future directions are extensions of Theorem \ref{thm:frob_of_l1} to matrix norms other than $\ell^p$ with $1 \leq p \leq 2$, as well as matrix models other than iid Gaussian.


\section{Acknowledgments}

The authors would like to thank Mihailo Kolund\v{z}ija, Miki Elad, Jakob Lemvig, and Martin Vetterli for the discussions and input in preparing this manuscript. A special thanks goes to Christos Thrampoulidis for his help in understanding and applying the Gaussian min-max theorem, and to Simon Foucart for discussions leading to the proof of \cite[Lemma 2.1]{beyondMP-partII}.

This work was supported in part by the European Research Council, PLEASE project (ERC-StG-2011-277906).


\section*{Appendices}
\appendix

\section{Results about Gaussian processes}

\subsection{Concentration of measure} 

\begin{lemma}
\label{lem:concentration_results}
Let $\vh$ be a standard Gaussian random vector of length $n$, $\vh \sim \mathcal{N}(\vec{0}, \mI_n)$, and $f : \R^n \to \R$ a 1-Lipschitz function. Then the following hold:
\begin{enumerate}[label=(\alph*)]
    \item \label{lemitem:gausnormsingle} For any $0 < \epsilon < 1$, $\prob\{\norm{\vh}_2^2 \leq n(1 - \epsilon)\} \leq \e^{-\epsilon^2 n/ 4}$; $\prob\{\norm{\vh}_2^2 \geq n/(1 - \epsilon)\} \leq \e^{-\epsilon^2 n/ 4}$;
    \item \label{lemitem:gausnormabs} For any $0 < \epsilon < 1$, $\prob\left\{\norm{\vh}_2^2 \notin [(1 - \epsilon)n, n/(1 - \epsilon)]\right\} \leq 2\e^{-\tfrac{\epsilon^2 n}{4}}$;
    \item \label{lemitem:norm2abs} For any $\epsilon > 0$,
        $\prob\left\{\abs{\norm{\vh}_2^2 - n } \geq \sqrt{\epsilon n}\right\} \leq
        \begin{cases}
            2 \e^{-\frac{\epsilon}{8}} & \text{for $0 \leq \epsilon \leq n$}, \\
            2 \e^{-\frac{\sqrt{\epsilon n}}{8}} &\text{for $\epsilon > n$};
        \end{cases}$
    \item \label{lemitem:lipgaussingle} For any $u > 0$, $\prob\left\{f(\vh) - \E f(\vh) \geq u \right\} \leq \e^{-u^2/2}$; $\prob\left\{ f(\vh) - \E f(\vh) \leq -u\right\} \leq \e^{-u^2/2}$;
    \item \label{lemitem:lipgausabs} For any $u > 0$, $\prob\{ \abs{f(\vh) - \E f(\vh)} \geq u \} \leq 2\e^{-u^2/2}$;
    \item \label{lemitem:varoflip} $\var\{f(\vh)\} \leq 1$.
\end{enumerate}
\end{lemma}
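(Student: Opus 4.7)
The lemma collects six standard concentration facts about Gaussians, so my plan is to group them into three blocks and dispatch each with a uniform tool.

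For the chi-squared tails in parts (a)--(c), the plan is to apply the exponential Chernoff (Laplace transform) method to $\|\vh\|_2^2$. The moment generating function decouples across coordinates: $\E[e^{\lambda\|\vh\|_2^2}] = (1-2\lambda)^{-n/2}$ for $\lambda < 1/2$. For the upper tail in (a), I would apply Markov at threshold $n/(1-\epsilon)$ with $\lambda = \epsilon/2$ to obtain a bound of the form $\exp\bigl((n/2)[\log(1+u) - u]\bigr)$ with $u = \epsilon/(1-\epsilon)$, then close with the elementary inequality $\log(1+u) - u \leq -u^2/(2(1+u))$, which gives $e^{-n\epsilon^2/(4(1-\epsilon))} \leq e^{-n\epsilon^2/4}$. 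The lower tail is analogous, with $\lambda = -\epsilon/(2(1-\epsilon))$ and the inequality $\epsilon + \log(1-\epsilon) \leq -\epsilon^2/2$. Part (b) is a union bound of the two tails in (a). Part (c) follows from (b) after solving for the appropriate deviation: in the small-deviation regime $\epsilon \leq n$, substituting $\epsilon \leftarrow \sqrt{\epsilon/n}$ into (b) yields (up to a constant adjustment) the $e^{-\epsilon/8}$ bound; in the large-deviation regime $\epsilon > n$, one works with the un-optimized Chernoff bound at threshold $n + \sqrt{\epsilon n}$ and reads off the slower exponent $\sqrt{\epsilon n}/8$.

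For the Lipschitz concentration in (d)--(e), the plan is simply to invoke the classical Gaussian concentration inequality due to Borell and Sudakov--Tsirelson: if $\vh \sim \mathcal{N}(\vec{0}, \mI_n)$ and $f$ is $1$-Lipschitz, then
\[
\prob\bigl\{f(\vh) - \E f(\vh) \geq u\bigr\} \leq e^{-u^2/2}.
\]
Applied to $f$ this is the first half of (d); applied to $-f$ (also $1$-Lipschitz) it is the second half. Part (e) is the union bound of these two one-sided deviations.

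For the variance bound in (f), the plan is to invoke the Gaussian Poincar\'e inequality, $\var f(\vh) \leq \E\|\nabla f(\vh)\|_2^2$, valid for smooth $f$ (via Hermite expansion, or Ornstein--Uhlenbeck semigroup interpolation) and extended to Lipschitz $f$ by mollification; since $\|\nabla f\|_2 \leq 1$ almost everywhere for a $1$-Lipschitz $f$, this yields $\var f(\vh) \leq 1$. A self-contained alternative is to integrate the tail bound from (e) via $\var f(\vh) = \int_0^\infty 2u\,\prob\{|f(\vh) - \E f(\vh)| \geq u\}\,du \leq 4$, which sacrifices the sharp constant but avoids Poincar\'e altogether. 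The only place that genuinely requires care is part (c), where the stated bound splits into two regimes with different exponents and the constants have to be tracked across the crossover $\epsilon = n$; everything else is a direct quotation of standard Gaussian concentration machinery, so no serious obstacle is anticipated.
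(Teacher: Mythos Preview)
Your plan is correct and aligns with the paper's own treatment: for (a) the paper cites a Chernoff-type bound (Barvinok), for (b) it takes the union bound, for (d)--(e) it cites Gaussian Lipschitz concentration (Ledoux), and for (f) it uses the Gaussian Poincar\'e inequality, exactly as you propose.

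The one place your route differs is part (c). The paper does not derive (c) from (b); instead it observes that each $h_i^2$ is sub-exponential with parameters $(\nu,b)=(2,4)$ and applies the Bernstein-type tail bound for sums of sub-exponential variables (Wainwright), which directly outputs the two-regime bound $2e^{-\epsilon/8}$ for $\epsilon\le n$ and $2e^{-\sqrt{\epsilon n}/8}$ for $\epsilon>n$. Your reduction of (c) to (b) has a small wrinkle: the interval in (b) is $[(1-\epsilon')n,\,n/(1-\epsilon')]$, which is asymmetric about $n$, so the substitution $\epsilon'=\sqrt{\epsilon/n}$ does not cover the upper tail $\|\vh\|_2^2\ge n+\sqrt{\epsilon n}$ (since $n/(1-\epsilon')>n(1+\epsilon')$), and adjusting $\epsilon'$ to fix this degrades the constant to $1/16$ rather than $1/8$. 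The clean fix is to bypass (b) and run your Chernoff bound from (a) directly at the threshold $n+\sqrt{\epsilon n}$: with $v=\sqrt{\epsilon/n}$ one gets $\exp\bigl(\tfrac{n}{2}[\log(1+v)-v]\bigr)$, and the inequalities $\log(1+v)-v\le -v^2/4$ for $v\le 1$ and $\log(1+v)-v\le -v/4$ for $v\ge 1$ recover exactly the stated exponents. So your overall strategy works, but the route through (b) should be replaced by a direct Chernoff step.
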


\begin{proof}[Proofs and references]
\begin{enumerate}[label=(\alph*)]
    \item \cite[Corollary 2.3]{barvinok2005math};
    \inlineitem $\ref{lemitem:gausnormsingle}$ with a union bound;
    \inlineitem $X_i = h_i^2$ is subexponential with parameters $\nu = 2, b = 4$ \cite[Example 2.4]{wainwright}, i.e., $\E[\e^{\lambda(X_i - \mu)}] \leq \e^{\frac{\nu^2 \lambda^2}{2}}$ for all $\abs{\lambda} \leq \frac{1}{b}$, and $\mu = \E[X_i] = 1$. Applying \cite[Proposition 2.2]{wainwright}
    then yields
    \(
        \prob\left\{\abs{\norm{\vh}_2^2 - n} \geq \sqrt{\epsilon n} \right\} \leq 2
            \e^{-\frac{\epsilon}{8}},
    \)
    for $0 \leq \epsilon \leq n$, and
    \(
        \prob\left\{\abs{\norm{\vh}_2^2 - n} \geq \sqrt{\epsilon n} \right\} \leq 2
            \e^{-\frac{\sqrt{\epsilon n}}{8}},
    \)
    for $\epsilon > n$.
    \inlineitem \cite[Eq. (1.22)]{Ledoux:1999ip};
    \inlineitem union bound applied to \ref{lemitem:lipgaussingle};
    \inlineitem a consequence of Poincar\'e inequality for Gaussian measures \cite[Eq. (2.16)]{Ledoux:1999ip}: $\var\{f(\vh)\} \leq \E \{\norm{\nabla f(\vh)}_2^2\}$ for 1-Lipschitz $f$ for which $\norm{\nabla f(\vh)}_2 \leq 1$.
\end{enumerate}
\end{proof}


We will also use the following facts which can be verified by direct computation:

\begin{lemma}
\label{lem:theta_explicit}
Let $\theta(t) \bydef \E (\abs{h} - t)_+^2$, where $h \sim \mathcal{N}(0, 1)$ and $t \geq 0$. Then 
\begin{enumerate}
    \item $\theta(t) = \left(t^2+1\right) \erfc\left(\frac{t}{\sqrt{2}}\right)-\sqrt{\frac{2}{\pi }} \e^{-\frac{t^2}{2}} t $,
    \item $\theta(t) - (t/2) \theta'(t) = \erfc(t / \sqrt{2})$.
\end{enumerate}
\end{lemma}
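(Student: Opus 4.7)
The plan is to proceed by direct integration, exploiting the symmetry of the standard normal density. By symmetry of $h \sim \mathcal{N}(0,1)$ about the origin, I first rewrite
\[
\theta(t) = \mathbb{E}(|h|-t)_+^2 = 2\int_t^\infty (x-t)^2 \phi(x)\,dx,
\]
where $\phi(x) = (2\pi)^{-1/2} e^{-x^2/2}$. Expanding the square yields three Gaussian moment integrals on the tail $[t,\infty)$: a zeroth moment that gives $\tfrac{1}{2}\operatorname{erfc}(t/\sqrt{2})$, a first moment equal to $\phi(t)$ (from $-\phi'(x) = x\phi(x)$), and a second moment that I will handle by integration by parts with $u=x$, $dv = x\phi(x)\,dx = -d\phi(x)$, producing $t\phi(t) + \tfrac{1}{2}\operatorname{erfc}(t/\sqrt{2})$. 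Combining these three tail moments with the coefficients $1$, $-2t$, $t^2$ from the expansion and simplifying yields exactly the claimed closed form for $\theta(t)$, which establishes item~1.

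For item~2, I have two equivalent routes. The quick one is to differentiate the closed-form expression from item~1 directly, using $\tfrac{d}{dt}\operatorname{erfc}(t/\sqrt{2}) = -\sqrt{2/\pi}\,e^{-t^2/2}$; then $(t/2)\theta'(t)$ subtracts cleanly from $\theta(t)$ so that the $t^2\operatorname{erfc}$ terms and the $\sqrt{2/\pi}\, t\,e^{-t^2/2}$ terms cancel in pairs, leaving $\operatorname{erfc}(t/\sqrt{2})$. The alternative, which I find a bit cleaner conceptually, is to apply the Leibniz rule to the integral representation: since the integrand vanishes at $x=t$, only the interior derivative contributes, giving
\[
\theta'(t) = -4\int_t^\infty (x-t)\phi(x)\,dx = -2\sqrt{\tfrac{2}{\pi}}\,e^{-t^2/2} + 2t\operatorname{erfc}(t/\sqrt{2}),
\]
and then verifying the identity by direct substitution.

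There is no real obstacle here; both parts are routine Gaussian moment computations. The only minor care point is bookkeeping: getting the factor of $2$ from the symmetrization right, and correctly applying Leibniz (noting that the boundary term vanishes because $(x-t)^2|_{x=t}=0$). After that, item~2 is essentially a one-line algebraic cancellation once item~1 is in hand.
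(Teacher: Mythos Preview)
Your proposal is correct and is precisely the ``direct computation'' the paper alludes to; the paper itself gives no detailed proof, merely stating that the identities can be verified by direct computation. Your expansion of $(x-t)^2$ into three tail integrals and the Leibniz-rule differentiation (with the vanishing boundary term) are exactly the routine steps one would carry out, and the bookkeeping is accurate.
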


\subsection{Convex Gaussian Min-Max Theorem (CGMT)}

\label{append:CGMT}

Let the principal optimization (PO) and auxiliary optimization (AO) be defined as 
\begin{align}
    \Phi(\mG) & \ \bydef \ \min_{\vv \in \setS_\vv} \max_{\vu \in \setS_\vu} \ \vu^\T \mG \vw + \psi(\vv,\vu),     \label{eq:po} \tag{PO}
\\
    \phi(\vg, \vh) & \ \bydef \ \min_{\vv \in \setS_\vv} \max_{\vu \in \setS_\vu} \ \norm{\vv}_2 \vg^\T \vu + \norm{\vu}_2 \vh^\T \vv + \psi(\vv, \vu),     \label{eq:ao} \tag{AO}
\end{align}
with $\mG \in \R^{m \times n}$, $\vg \in \R^m$, $\vh \in \R^n$, $\setS_\vv \subset
\R^n$, $\setS_\vu \subset \R^m$ and $\psi : \R^n \times \R^m \to \R$. 
Then we have the following result.

\begin{theorem}{\cite[Theorem 6.1]{Thrampoulidis:2016vo}}
    \label{thm:cgmt}

    In \eqref{eq:po} and \eqref{eq:ao}, let $\setS_\vv$ and $S_\vu$ be compact and $\psi$
    continuous on $\setS_\vv \times \setS_\vu$. Let also $\mG$, $\vg$, $\vh$ have
    iid standard normal entries. Then the following hold:
    \begin{enumerate}[label=(\roman*)]
        \item \label{thmpart:1} For all $c \in \R$
        \begin{equation*}
            \prob\{  \Phi(\mG) < c \} \leq 2\prob\{ \phi(\vg, \vh) \leq c \}.
        \end{equation*}
        
        \item \label{thmpart:cgmt_optval_concentrate} If $\psi(\vv,\vu)$ is additionally convex--concave on $\setS_\vv \times \setS_\vu$ where $\setS_{\vv}$ and $\setS_{\vu}$ are convex, then for all $c \in \R$
        \begin{equation*}
            \prob\{ \Phi(\mG) > c\} \leq 2 \prob\{\phi(\vg, \vh) \geq c\}.
        \end{equation*}
        In particular, for all $\mu \in \R$ and $t > 0$,
        \begin{equation*}
            \prob\{\abs{\Phi(\mG) - \mu} > t\} \leq 2\prob\{\abs{\phi(\vg, \vh) - \mu} \geq t\}.
        \end{equation*}
    \end{enumerate}
\end{theorem}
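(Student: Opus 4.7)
The plan is to follow the classical chain of ideas leading from Slepian--Gordon Gaussian comparison to its convex refinement. For part~(i), I would compare two zero-mean Gaussian processes indexed by $(\vv,\vu) \in \setS_\vv \times \setS_\vu$: $X(\vv,\vu) = \vu^\T \mG \vv$ and $Y(\vv,\vu) = \norm{\vv}_2 \vg^\T \vu + \norm{\vu}_2 \vh^\T \vv$. Since $\E[X(\vv,\vu)^2] = \norm{\vu}^2 \norm{\vv}^2$ while $\E[Y(\vv,\vu)^2] = 2\norm{\vu}^2 \norm{\vv}^2$, I would first symmetrize by introducing an independent standard scalar $g_0$ and setting $\tilde{X}(\vv,\vu) = \vu^\T\mG\vv + \norm{\vu}_2 \norm{\vv}_2 g_0$, so that the second moments of $\tilde{X}$ and $Y$ agree. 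A short expansion gives
\[
\E[\tilde{X}(\vv,\vu)\tilde{X}(\vv',\vu')] - \E[Y(\vv,\vu)Y(\vv',\vu')] = (\norm{\vu}\norm{\vu'} - \vu^\T \vu')(\norm{\vv}\norm{\vv'} - \vv^\T \vv'),
\]
which is nonnegative by Cauchy--Schwarz and vanishes whenever $\vv = \vv'$. This is exactly the covariance hypothesis for Gordon's Gaussian min-max theorem, whose tail form yields $\prob\{\min_{\vv} \max_{\vu} (\tilde{X}+\psi) < c\} \leq \prob\{\min_{\vv} \max_{\vu} (Y+\psi) < c\} = \prob\{\phi(\vg,\vh) < c\}$.

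Next I would remove the auxiliary $g_0$ to recover $\Phi(\mG)$ on the left, absorbing the factor of $2$. Since $g_0$ is symmetric and independent with $\prob\{g_0 \geq 0\} = 1/2$, on the event $\{g_0 \geq 0\}$ the perturbation $\norm{\vu}\norm{\vv} g_0$ is pointwise nonnegative, so $\max_{\vu}$ and hence $\min_{\vv}\max_{\vu}$ can only increase; thus on that event $\min_{\vv}\max_{\vu}(\tilde{X}+\psi) \geq \Phi(\mG)$. This gives
\[
\tfrac{1}{2}\prob\{\Phi(\mG) < c\} \leq \prob\{\min_{\vv}\max_{\vu}(\tilde{X}+\psi) < c\} \leq \prob\{\phi(\vg,\vh) \leq c\},
\]
which is part~(i).

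For part~(ii), the convex--concave structure of $\psi$ on the convex compact sets $\setS_\vv, \setS_\vu$ lets me invoke Sion's minimax theorem to swap $\min_{\vv}$ and $\max_{\vu}$ in both the PO and AO. Since $-\psi$ is concave--convex and the bilinear part $-\vu^\T\mG\vv$ has the same law as $\vu^\T\mG\vv$, I can apply the argument of part~(i) to the flipped problem with $\vv$ and $\vu$ playing swapped roles. Gordon's comparison then yields $\prob\{-\Phi(\mG) < -c\} \leq 2\prob\{-\phi(\vg,\vh) \leq -c\}$, i.e., $\prob\{\Phi(\mG) > c\} \leq 2\prob\{\phi(\vg,\vh) \geq c\}$. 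The two-sided statement for $\abs{\Phi(\mG) - \mu} > t$ then follows by combining the one-sided bounds at $c = \mu + t$ and $c = \mu - t$ via a union bound.

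The main obstacle I anticipate is promoting Gordon's classical expectation-based comparison to the tail form cleanly and handling the factor-of-$2$ symmetrization via the auxiliary scalar $g_0$: this is the delicate step where one must argue that the extra nonnegative shift inside the inner min-max interacts correctly with the event probabilities. A secondary subtlety in part~(ii) is verifying that after Sion's swap and the sign flip the auxiliary Gaussian process still obeys the required covariance comparison --- this again reduces to Cauchy--Schwarz, but the bookkeeping with interchanged roles of primal and dual variables must be done carefully.
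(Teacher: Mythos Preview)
The paper does not prove this theorem at all: it is quoted verbatim as \cite[Theorem 6.1]{Thrampoulidis:2016vo} and used as a black box. So there is no ``paper's own proof'' to compare against; your proposal is essentially the standard argument found in the cited reference (Gordon's Gaussian comparison plus the auxiliary scalar $g_{0}$, then Sion and a role swap for the upper tail).

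Two points on the proposal itself. First, your $g_{0}$ step has the conditioning on the wrong sign. On $\{g_{0}\geq 0\}$ you correctly get $\min_{\vv}\max_{\vu}(\tilde X+\psi)\geq \Phi(\mG)$, but this implies $\{\min\max(\tilde X+\psi)<c\}\subset\{\Phi<c\}$ on that event, which is the reverse of the inequality you write. The argument you want conditions on $\{g_{0}\leq 0\}$: there $\tilde X\leq X$ pointwise, so $\min\max(\tilde X+\psi)\leq \Phi(\mG)$, hence $\{\Phi<c,\ g_{0}\leq 0\}\subset\{\min\max(\tilde X+\psi)<c\}$, and independence gives $\tfrac12\,\prob\{\Phi<c\}\leq \prob\{\min\max(\tilde X+\psi)<c\}$. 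This is purely a sign bookkeeping slip, not a structural gap.

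Second, in part~(ii) you cannot invoke Sion on the auxiliary optimization: the map $(\vv,\vu)\mapsto \|\vv\|\,\vg^{\T}\vu+\|\vu\|\,\vh^{\T}\vv$ is \emph{not} convex--concave, so the AO min--max need not equal its max--min. Fortunately only weak duality is required. After Sion on the PO and the sign/role flip, part~(i) bounds $\prob\{-\Phi<-c\}$ by twice the tail of the ``flipped'' AO, namely $\min_{\vu}\max_{\vv}[\cdot]$; since $\max_{\vv}\min_{\vu}\leq \min_{\vu}\max_{\vv}$ always holds, and (by symmetry of $\vg,\vh$) $-\phi$ has the same law as the corresponding $\max_{\vv}\min_{\vu}$ quantity, you get $\prob\{\text{flipped AO}\leq -c\}\leq \prob\{-\phi\leq -c\}$, which is exactly what is needed. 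It would strengthen the write-up to make this weak-duality step explicit rather than suggesting a second application of Sion.
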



\section{Lemmata for Section \ref{sec:proof-main-concentration}}
\label{appendix:stability}

\begin{lemma}[{\cite[Lemma 9.2]{Oymak:2013vm}} with explicit dependence on $\epsilon$]
    \label{lem:lasso_satisfies_equality}
    Let $\mA \in \R^{m \times n}$ be a random matrix with iid standard normal
    entries, and $m < n$. Let further $\vy \in \R^m$ and consider the solution of an
    $\ell^2$-lasso with a regularizer $f$ which is $L$-Lipschitz with respect to the $\ell^{2}$-norm:
    \begin{equation*}
        \vx^\star \bydef \argmin_{\vx \in \R^n} \norm{\vy - \mA \vx}_2 + \lambda f(\vx).
    \end{equation*}
    Then for any $0 \leq \epsilon < 1$ and $0 < \lambda < \frac{\sqrt{n} - \sqrt{m}}{L}(1 - \epsilon)$ we have $\vy = \mA \vx^\star$ with
    probability at least $1 - \e^{-\epsilon^2 (\sqrt{n} - \sqrt{m})^2 / 2}$, that is, $\ell^2$-lasso gives the same optimizer as equality-constrained minimization.
\end{lemma}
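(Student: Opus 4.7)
My plan is to prove this by contradiction, using first-order optimality at $\vx^{\star}$ combined with a Gordon-type lower bound on the smallest singular value of $\mA$. The key observation is that an optimizer with a nonzero residual forces $\mA^{\T}$ to have small norm on some unit vector in $\R^{m}$, which is unlikely for a standard Gaussian matrix with $n>m$.

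First, I would set up the optimality condition. Suppose, toward a contradiction, that $\mA \vx^{\star} \neq \vy$. Then $\vx \mapsto \norm{\vy - \mA \vx}_{2}$ is differentiable at $\vx^{\star}$ with gradient $-\mA^{\T}\vn$, where
\[
\vn \;\bydef\; (\vy - \mA \vx^{\star})/\norm{\vy - \mA \vx^{\star}}_{2}
\]
is a unit vector. The first-order condition $0 \in -\mA^{\T}\vn + \lambda\,\partial f(\vx^{\star})$ then yields a subgradient $\vs \in \partial f(\vx^{\star})$ with $\mA^{\T}\vn = \lambda\vs$. Because $f$ is $L$-Lipschitz in the Euclidean norm, every convex subgradient obeys $\norm{\vs}_{2} \leq L$, so $\norm{\mA^{\T}\vn}_{2} \leq \lambda L$, and in particular
\[
\sigma_{\min}(\mA) \;=\; \min_{\norm{\vn}_{2}=1}\norm{\mA^{\T}\vn}_{2} \;\leq\; \lambda L.
\]

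Next I would lower-bound $\sigma_{\min}(\mA)$. Gordon's theorem for iid standard Gaussian matrices with $m \leq n$ gives $\E\,\sigma_{\min}(\mA) \geq \sqrt{n} - \sqrt{m}$, and since $\mA \mapsto \sigma_{\min}(\mA)$ is $1$-Lipschitz in the Frobenius norm (a consequence of Weyl's inequalities), Gaussian concentration (Lemma~\ref{lem:concentration_results}\ref{lemitem:lipgaussingle}) gives, for every $t \geq 0$,
\[
\prob\bigl\{\sigma_{\min}(\mA) \leq (\sqrt{n}-\sqrt{m}) - t\bigr\} \;\leq\; \e^{-t^{2}/2}.
\]
Choosing $t = \epsilon(\sqrt{n}-\sqrt{m})$ gives
\[
\prob\bigl\{\sigma_{\min}(\mA) \leq (1-\epsilon)(\sqrt{n}-\sqrt{m})\bigr\} \;\leq\; \e^{-\epsilon^{2}(\sqrt{n}-\sqrt{m})^{2}/2}.
\]

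Combining the two estimates finishes the argument. On the complement event, which has probability at least $1 - \e^{-\epsilon^{2}(\sqrt{n}-\sqrt{m})^{2}/2}$, we have $\sigma_{\min}(\mA) > (1-\epsilon)(\sqrt{n}-\sqrt{m}) > \lambda L$ by the hypothesis on $\lambda$; this contradicts $\sigma_{\min}(\mA) \leq \lambda L$. Hence on this event the residual cannot be nonzero, so $\mA \vx^{\star} = \vy$. The main subtlety I anticipate is the subgradient step: it implicitly relies on $f$ being convex so that the usual subdifferential calculus applies (true in the paper's setting, where $f = \norm{\cdot}_{p}$, but for a general Lipschitz $f$ one would need the Clarke subdifferential). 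A secondary, routine point is applying Gordon's bound in the correct orientation, with $\mA^{\T}$ tall ($n \geq m$), which is exactly the hypothesis.
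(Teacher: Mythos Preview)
Your proof is correct and uses the same probabilistic input as the paper---Gordon's lower bound on $\E\,\sigma_{\min}(\mA^{\T})$ combined with Gaussian concentration for the $1$-Lipschitz map $\mA \mapsto \sigma_{\min}(\mA^{\T})$---but the deterministic half of the argument is genuinely different. The paper works at the \emph{zeroth-order} level: it builds an explicit competitor $\vx^{\circ} \bydef \vx^{\star} + \mA^{\dag}(\vy-\mA\vx^{\star})$ that satisfies $\mA\vx^{\circ}=\vy$, bounds $\norm{\vx^{\circ}-\vx^{\star}}_{2} \leq \norm{\vy-\mA\vx^{\star}}_{2}/\sigma_{\min}(\mA^{\T})$, and then compares objective values using only the $L$-Lipschitz property of $f$. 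You instead work at the \emph{first-order} level: assuming a nonzero residual, the optimality condition forces $\mA^{\T}\vn \in \lambda\,\partial f(\vx^{\star})$, and the Lipschitz bound on subgradients immediately yields $\sigma_{\min}(\mA^{\T}) \leq \lambda L$. Your route is slightly shorter and avoids constructing the competitor, but it buys that economy by requiring convex subdifferential calculus (which, as you note, is fine here since the paper only applies the lemma with $f=\norm{\cdot}_{p}$). The paper's competitor argument, by contrast, never differentiates anything and would go through verbatim for any $L$-Lipschitz regularizer, convex or not, provided a global minimizer exists.
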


\begin{proof}
    Using \cite[Corollary 5.35]{Vershynin:2009fv}\footnote{We actually use a one-sided variant of \cite[Corollary 5.35]{Vershynin:2009fv} which can be obtained by combining Lemma \ref{lem:concentration_results}\ref{lemitem:lipgaussingle} with the estimate of the expectation of $\sigma_{\text{min}}$, \cite[Theorem 5.32]{Vershynin:2009fv}.}, we have for any $\epsilon \geq 0$
    \begin{equation}
        \label{eq:concentration_of_sigma_min}
        \prob[\sigma_{\text{min}}(\mA^\T) / (\sqrt{n} - \sqrt{m}) \leq 1 - \epsilon] \leq \e^{-\epsilon^2(\sqrt{n} - \sqrt{m})^{2}/ 2}.
    \end{equation}
    Let $\vp \bydef \vy - \mA \vx^\star$ and $\vw \bydef \mA^\dag \vp$, where $\mA^\dag = \mA^\T (\mA \mA^\T)^{-1}$ denotes the MPP ($\mA \mA^\T$ is almost surely invertible). Since
    \(
        \norm{\vw}_2^2 = \vp^\T (\mA \mA^\T)^{-1} \vp \leq {\norm{\vp}_2^2} { \sigma_{\text{min}}^{-2}(\mA^\T)}
    \)
    we have from \eqref{eq:concentration_of_sigma_min} that for any $0 < \epsilon < 1$, with probability at least $1 -
    \e^{-\epsilon^2 (\sqrt{n} - \sqrt{m})^{2} / 2}$,
    \(
        \norm{\vw}_2 \leq \frac{\norm{\vp}_2}{(\sqrt{n} - \sqrt{m})(1 - \epsilon)}.
    \)
    Let $\vx^\circ \bydef \vx^\star + \vw$ so that $\vy - \mA \vx^\circ = \vzero$.
    Optimality of $\vx^\star$ gives
    \begin{equation}
        \label{eq:lasso_is_exact_nonpositive_diff}
        (*) = \big[ \norm{\vy - \mA \vx^\star}_2 + \lambda f(\vx^\star) \big] - \big[ \norm{\vy - \mA \vx^\circ}_2 + \lambda f(\vx^\circ) \big] \leq 0.
    \end{equation} 
    On the other hand,
    \begin{align*}
        (*)
        = \norm{\vp}_2 + \lambda f(\vx^\star) - \lambda f(\vx^\circ) 
        \geq \norm{\vp}_2 - \lambda L \norm{\vx^\star - \vx^\circ}_2 
        & = \norm{\vp}_2 - \lambda L \norm{\vw}_2 \\
        & \geq \norm{\vp}_2 \left( 1 - \tfrac{\lambda L}{(\sqrt{n} - \sqrt{m})(1 - \epsilon)} \right),
    \end{align*}
    where, by \eqref{eq:lasso_is_exact_nonpositive_diff}, the last expression must be non-positive. But if we choose
    \(
        \lambda < \frac{\sqrt{n} - \sqrt{m}}{L} (1 - \epsilon),
    \)
    the only way to make it non-positive is that $\norm{\vp}_2 = 0$.
\end{proof}

\begin{lemma}
    \label{lem:concentration2}
    Let $\vg \sim \mathcal{N}(\vzero, \mI_m)$, $\vh \sim \mathcal{N}(\vzero, \mI_n)$, $1 \leq p \leq \infty$, and define
    \begin{equation*}
    \Delta_p(\beta; \vh,\lambda) 
    \bydef   
    \tfrac{1}{\sqrt{n}}\  \dist(\beta \vh,  \norm{\,\cdot\,}_{p^*} \leq \lambda)
    \quad \text{and} \quad
    \Delta_p(\beta;n,\lambda) 
    \bydef   \E[\Delta_p(\beta;\vh,\lambda)].
     \end{equation*}
  There exist universal constants $c_1, c_2 > 0$ such that for any $0 < \epsilon < 2$, any integers $m,n$ and any $A>0$ we have,  with $\delta \bydef (m-1)/n$ and $\phi(\alpha,\beta;\vg,\vh)$ defined as in~\eqref{eq:DefPhi2Scalars}:
    \begin{equation*}
        \prob \big\{ \exists 0<\alpha \leq A, 0<\beta<1,\ \abs{\phi(\alpha, \beta ; \vg, \vh) - \kappa(\alpha, \beta)} \geq \epsilon \big\} \leq \tfrac{c_1}{\epsilon} \e^{-c_2 \epsilon^2 n \cdot \tfrac{\min(\delta,1/A^{2})}{1+\delta A^{2}}} \bydef \zeta(n, \epsilon; A,\delta),
    \end{equation*}
    where
    \begin{equation}\label{eq:DefKappa}
       \kappa(\alpha, \beta) = \kappa_{p}(\alpha,\beta; n,\delta,\lambda)
    \bydef \beta \sqrt{\delta \alpha^2 + 1} - \alpha \Delta_p(\beta;n,\lambda).
\end{equation}
\end{lemma}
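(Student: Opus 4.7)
The plan is to control $\left|\phi(\alpha,\beta;\vg,\vh)-\kappa(\alpha,\beta)\right|$ uniformly over $(\alpha,\beta)\in(0,A]\times(0,1)$ in three steps: first decompose the gap into a $\vg$-dependent and an $\vh$-dependent part, then obtain pointwise Gaussian concentration of each, and finally lift the pointwise statement to a uniform one by combining joint Lipschitz continuity in $(\alpha,\beta)$ with a covering argument. The natural decomposition is
\[
\phi(\alpha,\beta;\vg,\vh) - \kappa(\alpha,\beta) = \underbrace{\beta\bigl(\norm{\alpha\vg/\sqrt{n}-\ve_1}_2 - \sqrt{\delta\alpha^2+1}\bigr)}_{=:T_{1}} - \underbrace{\alpha\bigl(\Delta_p(\beta;\vh,\lambda) - \Delta_p(\beta;n,\lambda)\bigr)}_{=:T_{2}}.
\]

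Pointwise concentration of $T_{2}$ is immediate: the map $\vh \mapsto \dist(\beta\vh,\norm{\,\cdot\,}_{p^{*}}\leq\lambda)$ is $\beta$-Lipschitz (a $1$-Lipschitz distance to a convex set composed with scalar multiplication), so $\vh \mapsto \Delta_p(\beta;\vh,\lambda)$ is $(\beta/\sqrt{n})$-Lipschitz and Lemma~\ref{lem:concentration_results}\ref{lemitem:lipgausabs} yields $\prob\{|T_{2}|\geq t\}\leq 2\e^{-nt^{2}/(2\alpha^{2}\beta^{2})}$. For $T_{1}$, the map $\vg \mapsto \beta\norm{\alpha\vg/\sqrt{n}-\ve_1}_2$ is $(\alpha\beta/\sqrt{n})$-Lipschitz and hence concentrates at the same rate around its own mean $\beta\mu(\alpha)$. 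To replace $\mu(\alpha):=\E\norm{\alpha\vg/\sqrt{n}-\ve_1}_2$ by $\sqrt{\delta\alpha^{2}+1}$, I use $(\mu(\alpha))^{2}=\E X^{2}-\var X$ with $X=\norm{\alpha\vg/\sqrt{n}-\ve_1}_2$: direct computation gives $\E X^{2}=1+\alpha^{2}m/n$ and Lemma~\ref{lem:concentration_results}\ref{lemitem:varoflip} gives $\var X\leq \alpha^{2}/n$, so $\sqrt{\delta\alpha^{2}+1}\leq \mu(\alpha)\leq\sqrt{1+\alpha^{2}m/n}$. The resulting bias is at most $(\alpha^{2}/n)/(2\sqrt{\delta\alpha^{2}+1})$, absorbed into $\epsilon$ once $n$ is moderately large.

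To promote these pointwise bounds to a uniform one over $(\alpha,\beta)$, both $(\alpha,\beta)\mapsto\phi(\alpha,\beta;\vg,\vh)$ and $(\alpha,\beta)\mapsto\kappa(\alpha,\beta)$ are Lipschitz on $(0,A]\times(0,1)$: the constants for $\kappa$ are controlled by $\sqrt{1+\delta A^{2}}$ (visible from $\partial_{\beta}\kappa$), and those for $\phi$ are controlled by $\norm{\vg}_2/\sqrt{n}$, $\Delta_p(\beta;\vh,\lambda)$, and the scalar derivatives of the distance, all of size $O(\sqrt{1+\delta A^{2}})$ with probability exponentially close to one by Lemma~\ref{lem:concentration_results}\ref{lemitem:gausnormsingle}. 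Covering $(0,A]\times(0,1)$ at scale $\xi\asymp\epsilon/\sqrt{1+\delta A^{2}}$ and taking a union bound over the resulting $O(A(1+\delta A^{2})/\epsilon^{2})$ net points then yields uniform concentration; the polynomial net-size factor is absorbed into the prefactor $c_{1}/\epsilon$ by a mild adjustment of $c_{2}$, using $\log(1/\epsilon)\lesssim n\epsilon^{2}\cdot(\cdot)$ valid for $n$ large.

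The main obstacle is keeping enough bookkeeping to recover the exponent $c_{2}\epsilon^{2}n\cdot \min(\delta,A^{-2})/(1+\delta A^{2})$ precisely. The factor $A^{-2}$ comes directly from the Lipschitz constant $\alpha\beta/\sqrt{n}\leq A/\sqrt{n}$ used in the pointwise Gaussian concentration; the denominator $1+\delta A^{2}$ reflects the natural magnitude of $\phi$, $\kappa$, and their partial derivatives in $\beta$, against which an additive error $\epsilon$ must be normalized when balancing net spacing against Lipschitz oscillation; and the $\min(\delta,\cdot)$ encodes the switch between the regimes $A^{2}\lessgtr 1/\delta$, where either the pointwise concentration (small $A$) or the combined net-plus-Lipschitz rate (large $A$) is binding. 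Everything else is routine provided these constants are tracked in lockstep through each step.
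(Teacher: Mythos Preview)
Your decomposition into $T_{1}$ and $T_{2}$ matches the paper's, and your treatment of $T_{2}$ (pointwise Lipschitz concentration followed by a one-dimensional covering in $\beta$, using Lemma~\ref{lem:sqrtn_lipschitz}) is essentially what the paper does. The real divergence is in how you handle $T_{1}$.

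The paper does \emph{not} concentrate $\norm{\alpha\vg/\sqrt{n}-\ve_{1}}_{2}$ around its mean $\mu(\alpha)$ and then estimate the bias $\mu(\alpha)-\sqrt{\delta\alpha^{2}+1}$. Instead it splits $\vg=(g_{0},\tilde{\vg})$ with $\tilde{\vg}\in\R^{m-1}$ and writes
\[
\norm{\alpha\vg/\sqrt{n}-\ve_{1}}_{2}^{2}=\alpha^{2}\norm{\tilde{\vg}}_{2}^{2}/n+(\alpha g_{0}/\sqrt{n}-1)^{2}.
\]
The first piece is $\alpha^{2}$ times a rescaled $\chi^{2}_{m-1}$, so the event $\{\norm{\tilde{\vg}}_{2}^{2}/n\in[(1-\epsilon)\delta,\delta/(1-\epsilon)]\}$ has probability $\geq 1-2\e^{-\epsilon^{2}n\delta/4}$ and is \emph{literally independent of $\alpha$}. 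The second piece is handled by the single event $\{|g_{0}|\leq \epsilon'\sqrt{n}/A\}$, which depends only on the endpoint $A$. Two consequences: (i) there is \emph{no bias term} to absorb, because the target $\delta\alpha^{2}+1$ appears exactly; (ii) uniformity in $\alpha$ is free, so only a one-dimensional net in $\beta$ is needed. This is also where the factor $\min(\delta,A^{-2})$ genuinely comes from: it is the worse of the two rates $\delta$ (from the $\chi^{2}_{m-1}$ tail) and $A^{-2}$ (from the single Gaussian $g_{0}$), not a ``switch between regimes'' in a covering argument as you suggest.

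Your route is not wrong in principle, but it has two soft spots you wave past. First, ``absorbed into $\epsilon$ once $n$ is moderately large'' is not admissible: the lemma is stated for \emph{all} integers $m,n$ with universal constants. You would have to argue separately that whenever the bias $\alpha^{2}/(2n\sqrt{1+\delta\alpha^{2}})$ is comparable to $\epsilon$, the claimed right-hand side already exceeds $1$ (this is true but must be said). Second, your two-dimensional covering produces a net of size roughly $A(\sqrt{1+\delta A^{2}}+A)/\epsilon^{2}$; absorbing the $A$-dependence into a universal $c_{1}/\epsilon$ by ``a mild adjustment of $c_{2}$'' is not automatic, since $A$ is a free parameter decoupled from $n$ and $\epsilon$. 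The paper's one-dimensional covering already incurs an $A/\epsilon$ prefactor, and your two-dimensional version only compounds this. In short: the paper's $(g_{0},\tilde{\vg})$ split is the missing idea that removes both the bias and the need to net over $\alpha$, and your account of where $\delta$ enters the exponent does not match how it actually arises.
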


\begin{proof}
    We first look at the term $\norm{\alpha \vg / \sqrt{n} - \ve_1}_2$. 
    Partitioning $\vg$ as $[g_0, \ \wt{\vg}^\T]^\T$, $\wt{\vg} \in \mathbb{R}^{m-1}$,
    \[
        \norm{\alpha \vg / \sqrt{n} - \ve_1}_2^2 = \norm{\alpha \wt{\vg} / \sqrt{n}}_2^2 + (\alpha g_0 / \sqrt{n} - 1)^2.
    \]
    Using Lemma \ref{lem:concentration_results}\ref{lemitem:gausnormabs} we get for the first term:
    \[
        \prob \left\{  \exists 0<\alpha \leq A,\  \  \norm{\alpha \wt{\vg} / \sqrt{n}}_2^2 \notin [(1 - \epsilon) \delta \alpha^2, \delta \alpha^2/(1 - \epsilon)]\right\} \leq 2 \e^{-\tfrac{\epsilon^2 (m-1)}{4}} = 2 \e^{-\tfrac{\epsilon^{2}n\delta}{4}},
    \]
    since the event $\{ \norm{\alpha \wt{\vg} / \sqrt{n}} \notin [ (1 - \epsilon) \delta \alpha^2, \delta \alpha^2 / (1 - \epsilon) ]\}$ does not depend on $\alpha > 0$.

    Next, we show that the term $(\alpha g_0 / \sqrt{n} - 1)^2$ cannot deviate much from 1: setting $\epsilon' = \epsilon/2$,
    we have $\sqrt{1-\epsilon} < 1-\epsilon'$ and $1+\epsilon' < 1/\sqrt{1-\epsilon}$ hence
    \begin{align*}
        \prob \left\{ \forall 0<\alpha \leq A,\  \left(\tfrac{\alpha g_0}{\sqrt{n}} - 1\right)^2 \in [1 - \epsilon, 1/(1 - \epsilon)]\right\} 
        \geq \
        &\prob \left\{\forall 0<\alpha \leq A,\  \abs{\tfrac{\alpha g_0}{\sqrt{n}} - 1} \in [1 - \epsilon', 1 + \epsilon']\right\} \\
        \geq \ 
        &
        \prob \left\{\forall 0<\alpha \leq A,\ \alpha g_0 / \sqrt{n} \in [- \epsilon',\epsilon']\right\}\\
        = \ &  \prob \left\{A g_0 / \sqrt{n} \ \in \ [- \epsilon',\epsilon']\right\}.
    \end{align*}
   Then, with $\erfc$ being the complementary error function and using $\erfc(z) \leq \exp(-z^{2})$ \cite{Chiani:2003eu},
   \begin{align*}
        \prob \left\{ \exists 0<\alpha \leq A,\ \ \left(\tfrac{\alpha g_0}{\sqrt{n}} - 1\right)^2 \notin [1 - \epsilon, 1/(1 - \epsilon)]\right\}
 &       \leq 
        \prob( |g_0| > \tfrac{\epsilon'\sqrt{n}}{A}) =
    \erfc \left[ \tfrac{\epsilon' \sqrt{n}}{\sqrt{2} A} \right] \\
&        \leq \e^{-\tfrac{\epsilon'^2 n}{2 A^2}}
        = \e^{-\tfrac{\epsilon^2 n}{8 A^2}}.
    \end{align*}
    Combining the above, we get that: for any $0<\epsilon<1$, setting $\epsilon' = 1-(1-\epsilon)^{2} = \epsilon(2-\epsilon) \geq \epsilon$,
    \begin{align}
        &\prob \left\{  \exists 0<\alpha \leq A,\  \ \norm{\alpha \vg / \sqrt{n} - \ve_1}_2 \notin [(1-\epsilon)\sqrt{\delta \alpha^2 + 1}, \sqrt{\delta \alpha^2 + 1}/(1-\epsilon)] \right\}\notag\\
        = \ & \prob \left\{  \exists 0<\alpha \leq A,\  \ \norm{\alpha \vg / \sqrt{n} - \ve_1}_2^2 \notin [(1-\epsilon')(\delta \alpha^2 + 1), (\delta \alpha^2 + 1)/(1-\epsilon')] \right\}\notag\\
        \leq \ & 2\e^{-\tfrac{\epsilon'^2 n \delta}{4}} + \e^{-\tfrac{\epsilon'^2 n}{8A^2}} 
         \leq \ c'_1 \e^{-c'_2 \epsilon'^2 n \cdot \min(\delta,1/A^{2})} \leq c'_{1} \e^{-c'_{2}\epsilon^{2}n \cdot \min(\delta,1/A^{2})},\label{eq:concentrationlemmaterm1}
    \end{align}
    where the constants $c'_{1},c'_{2}$ are universal.  
    
    For the second term in $\phi(\alpha, \beta; \vg, \vh)$, we note that Euclidean distance to a convex set $\dist(\vx, \cal C)$ is 1-Lipschitz in $\vx$ with respect to $\norm{\,\cdot \,}_2$ so by Lemma \ref{lem:concentration_results}\ref{lemitem:lipgausabs} we get for $0<\beta \leq 1$ (we omit the dependency in $\lambda$ for brevity) and any $\tau \geq 0$ that
    \begin{equation}
        \prob \left\{ \abs{ \Delta_p(\beta; \vh) - \Delta_p(\beta)} > \tau \right\}
        \leq 2 \e^{- \tfrac{\tau^2 n}{2\beta^{2}}} \leq 2 \e^{- \tfrac{\tau^2 n}{2}}.\label{eq:BoundFh}
    \end{equation}
    This obviously extends to $\beta = 0$ since $\Delta_{p}(0;\vh) = \Delta_{p}(0) = 0$.

    Next, we want to bound 
    \[ 
        \prob \left\{ \exists \beta \in [0, 1], \ \abs{ \Delta_p(\beta; \vh) - \Delta_p(\beta)} > \tau \right\}
        =
        \prob \left\{ \sup_{\beta \in [0, 1]} \abs{ \Delta_p(\beta; \vh) - \Delta_p(\beta)} > \tau \right\}.
    \] 
    By Lemma \ref{lem:sqrtn_lipschitz}, the function $\beta \mapsto f_{\vh}(\beta) \bydef \abs{\Delta_p(\beta; \vh) - \Delta_p(\beta)}$ is $L_{\vh}$-Lipschitz in $\beta$ with $L_{\vh} \bydef \max \set{\norm{\vh}_2/\sqrt{n}, 1}$. Hence $f_{\vh}$ is continuous, and its supremum on the closed interval $[0,\ 1]$ is indeed a maximum reached at some maximizer $\beta_{\vh}$. 
    
    Let $b \leq 1/2$ and $Y_{b} = \set{b\tau/2,3b\tau/2,\ldots,(k-1/2)b\tau}$ be a uniform sampling of $[0,
    1]$ with spacing $b \tau$, with the last segment possibly being
    shorter. For a given $\vh$, there exists $y_\vh \in Y_{b}$ such that $\abs{\beta_{\vh} - y_{\vh}} \leq b \tau$. 
    For this $y_{\vh}$ we write
    \[
        f_{\vh}(\beta_{\vh}) - f_{\vh}(y_{\vh}) \leq \abs{ f_{\vh}(\beta_{\vh}) - f_{\vh}(y_{\vh}) } 
        \leq L_{\vh} \abs{\beta_{\vh} - y_{\vh}} \leq L_{\vh} b \tau,
    \]
    so that 
    \begin{eqnarray*}
          \prob\{ \sup_{\beta \in [0, 1]} f_{\vh}(\beta) > \tau\} 
          &=&      \prob\{ f_{\vh}(\beta_\vh) > \tau \} 
          \ \leq\  \prob\{ f_{\vh}(y_\vh) + L_{\vh} b\tau > \tau\}
          \ \leq\  \prob\left\{ f_{\vh}(y_\vh) > \tau/2~\text{or}~L_{\vh} b\tau > \tau/2\right\}\\
          &\leq& \prob\{ f_{\vh}(y_\vh) > \tau/2\} + \prob\{\max \set{\norm{\vh}_2/\sqrt{n},1} b > 1/2\}.
      \end{eqnarray*}
      As we do not know a priori to which $y \in Y_{b}$ the maximizer $\beta_{\vh}$ will be close, we continue with a union bound, and we further use that $b \leq 1/2$ to obtain by~\eqref{eq:BoundFh} and Lemma \ref{lem:concentration_results}\ref{lemitem:gausnormsingle}
          \begin{eqnarray*}
            \prob\{ \sup_{\beta \in [0, 1]} f_{\vh}(\beta) > \tau\}
      & \leq & \prob\Big\{ \exists y \in Y_b, f_{\vh}(y) > \tau/2\Big\} + \prob\Big\{\norm{\vh}_2^{2} > \tfrac{n}{4b^{2}}\Big\}
         \leq   |Y_{b}| \  2 \e^{-\tfrac{\tau^2 n}{8}} + \e^{-\tfrac{(1-4b^{2})^{2}n}{4}}\\
        & \leq & (1+1/(b\tau)) 2 \e^{-\tfrac{\tau^2 n}{8}} + \e^{-\tfrac{(1-4b^{2})^{2}n}{4}}.
    \end{eqnarray*}
    Setting
    $b \bydef \tfrac{1}{2}\sqrt{\tfrac{2 - \sqrt{2}}{2}} \approx 0.27$ we get that for any $0<\tau<1$ we have $\tfrac{(1-4b^{2})^2}{4} = \tfrac{1}{8} \geq \tfrac{\tau^2}{8}$, hence
    \begin{equation}
       \label{eq:concentrationlemmaterm2}
      \prob\{ \sup_{\beta \in [0, 1]} f_{\vh}(\beta) > \tau\}
      \leq
      (3+2/(b\tau)) \e^{-\tfrac{\tau^2 n}{8}}
       <  \tfrac{11}{\tau} \e^{-\tfrac{\tau^2 n}{8}}, ~ 0 < \tau < 1.
    \end{equation}

    To conclude we combine concentration bounds for both terms.
    First, we observe that
    \[
    \sup_{\substack{0 \leq \alpha \leq A\\ 0 \leq \beta \leq 1}} \abs{\phi(\alpha,\beta;\vg,\vh)-\kappa(\alpha,\beta)}
    \leq 
    \sup_{0 \leq \alpha \leq A} \abs{\norm{\frac{\alpha \vg}{\sqrt{n}}-\ve_{1}}_2-\sqrt{1+\delta\alpha^{2}}}
    +
    A \sup_{0 \leq \beta \leq 1} \abs{\Delta_{p}(\beta;\vh)-\Delta_{p}(\beta)}
    \]
    and by a union bound we just need to control the probability that each term exceeds $\epsilon/2$. Since we assume that $0<\epsilon<2$, we can use the multiplicative control~\eqref{eq:concentrationlemmaterm1} as follows:
    \begin{equation*}
    \begin{aligned}
      \prob \left\{  \exists \alpha \in (0,A], \ \Big|\norm{\tfrac{\alpha \vg}{\sqrt{n}} - \ve_1}_2 \!\! - \! \sqrt{\delta \alpha^2 + 1} \Big|  > \tfrac{\epsilon}{2} \right\}
      & =   \prob \left\{  \exists \alpha \in (0,A], \Bigg|\tfrac{\norm{\tfrac{\alpha \vg}{\sqrt{n}} - \ve_1}_2}{\sqrt{\delta \alpha^2 + 1}}-1 \Bigg| >\tfrac{\epsilon}{2\sqrt{\delta \alpha^2 + 1}} \right\}\\
      & \leq   \prob \left\{  \exists \alpha \in (0,A], \Bigg|\tfrac{\norm{\tfrac{\alpha \vg}{\sqrt{n}} - \ve_1}_2}{\sqrt{\delta \alpha^2 + 1}}-1\Bigg| >
      \tfrac{\epsilon}{2\sqrt{\delta A^2 + 1}} \right\}\\
      & \leq   \prob \left\{ \exists \alpha \in (0,A], \tfrac{\norm{\alpha \vg / \sqrt{n} - \ve_1}_2}{\sqrt{\delta \alpha^2 + 1}} \notin   [1-\epsilon',\tfrac{1}{1-\epsilon'}] \right\}\\
      &\leq c'_1 \e^{-c'_2 \epsilon'^2 n \cdot \min(\delta,1/A^{2})}.
    \end{aligned}
    \end{equation*}
    provided that $[1-\epsilon',1/(1-\epsilon')] \subset [1-\tfrac{\epsilon}{2\sqrt{\delta A^2 + 1}},1+\tfrac{\epsilon}{2\sqrt{\delta A^2 + 1}}]$. This is achieved with $\epsilon' = \epsilon/(1+\sqrt{\delta A^{2}+1})$. Combining the resulting  bound with the bound on $\prob\{ \sup_{\beta \in [0, 1]} f_{\vh}(\beta) > \tfrac{\epsilon}{2A}\}$ from~\eqref{eq:concentrationlemmaterm2} yields the result.
\end{proof}

\begin{corollary}\label{cor:empiricalmaxbeta}
    Consider $A>0$ and define for any set $S \subseteq [0, A]$:
    \begin{align*}
        \phi(\alpha ; \vg, \vh) & \bydef  \sup_{0 \leq \beta \leq 1} \phi(\alpha, \beta ; \vg, \vh), & 
        \kappa(\alpha) & \bydef  \sup_{0 \leq \beta \leq 1} \kappa(\alpha, \beta), \\
        \phi_S(\vg, \vh) & \bydef  \inf_{\alpha \in S} \phi(\alpha ; \vg, \vh), &
        \kappa_S & \bydef  \inf_{\alpha \in S} \kappa(\alpha).
    \end{align*}
    With $\vg \sim \mathcal{N}(\vzero, \mI_m)$, $\vh \sim \mathcal{N}(\vzero, \mI_n)$, $\delta = (m-1)/n$, we have for $0<\epsilon<2$ that
    \begin{equation}
        \label{eq:concentration_of_min_alpha}
        \prob \left\{ \sup_{S \subset [0,A]} \abs{\phi_S(\vg, \vh) - \kappa_S} \geq \epsilon \ \right\} \leq \zeta(n, \epsilon).
    \end{equation}
    with $\zeta(n,\epsilon)=\zeta(n,\epsilon;A,\delta)$ defined in~Lemma \ref{lem:concentration2}. In particular, letting $\phi(\alpha) \bydef \phi(\alpha ; \vg, \vh)$,
    \begin{equation}
        \label{eq:concentration_of_max_beta}
        \prob \left\{ \sup_{0 \leq \alpha \leq A}\abs{\phi(\alpha) - \kappa(\alpha)} \geq \epsilon \right\} \leq \zeta(n, \epsilon).
    \end{equation}
\end{corollary}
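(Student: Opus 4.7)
The plan is to deduce the corollary directly from Lemma \ref{lem:concentration2} via the standard fact that taking suprema and infima of two functions that are uniformly $\epsilon$-close preserves this $\epsilon$-closeness. There is essentially no new probabilistic work to do here; the statement is a packaging of the uniform concentration already established.

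First I would pass from the open rectangle $(0,A]\times(0,1)$ of Lemma \ref{lem:concentration2} to the closed box $[0,A]\times[0,1]$ by continuity. Both maps $(\alpha,\beta) \mapsto \phi(\alpha,\beta;\vg,\vh)$ (from \eqref{eq:DefPhi2Scalars}) and $(\alpha,\beta) \mapsto \kappa(\alpha,\beta)$ (from \eqref{eq:DefKappa}) are continuous on $[0,A]\times[0,1]$: the Euclidean norm, the distance function to a convex set, and $\Delta_{p}(\cdot\,;n,\lambda)$ (obtained by dominated convergence) are all continuous. Hence on the complement event
\[
\mathcal{E}_\epsilon \bydef \Big\{\,\forall(\alpha,\beta)\in(0,A]\times(0,1):\ |\phi(\alpha,\beta;\vg,\vh) - \kappa(\alpha,\beta)| < \epsilon\,\Big\},
\]
the strict pointwise bound on the open rectangle extends to the non-strict uniform bound
\[
\sup_{(\alpha,\beta)\in[0,A]\times[0,1]} |\phi(\alpha,\beta;\vg,\vh) - \kappa(\alpha,\beta)| \leq \epsilon,
\]
and by Lemma \ref{lem:concentration2}, $\prob(\mathcal{E}_\epsilon^{c}) \leq \zeta(n,\epsilon)$.

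Next I would invoke the elementary inequality $|\sup_T F - \sup_T G|\leq \sup_T |F-G|$ and $|\inf_T F - \inf_T G|\leq \sup_T |F-G|$, valid for any bounded real functions $F,G$ on a common non-empty set $T$. Applying it with $T=[0,1]$ and the supremum over $\beta$ gives, on $\mathcal{E}_\epsilon$, $|\phi(\alpha;\vg,\vh) - \kappa(\alpha)| \leq \epsilon$ uniformly in $\alpha \in [0,A]$, which is exactly \eqref{eq:concentration_of_max_beta}. Applying the inequality a second time with $T=S$ and the infimum over $\alpha \in S$ yields $|\phi_S(\vg,\vh) - \kappa_S| \leq \epsilon$ for every non-empty $S \subseteq [0,A]$. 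Since the right-hand bound $\epsilon$ does not depend on $S$, taking the supremum over all such $S$ preserves it, giving $\sup_{S\subseteq[0,A]} |\phi_S(\vg,\vh)-\kappa_S| \leq \epsilon$ on $\mathcal{E}_\epsilon$, which is exactly \eqref{eq:concentration_of_min_alpha}.

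There is no substantive obstacle: the only technical care needed is the continuity extension from the open rectangle on which Lemma \ref{lem:concentration2} is phrased to its compact closure, which is immediate from the explicit form of $\phi$ and $\kappa$. The probabilistic content is entirely inherited from the joint uniform concentration of Lemma \ref{lem:concentration2}, and the constants $c_1,c_2$ hidden in $\zeta(n,\epsilon)$ need not be re-examined.
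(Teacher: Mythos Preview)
Your proposal is correct and follows essentially the same approach as the paper: invoke the uniform concentration of Lemma~\ref{lem:concentration2} and then pass through the sup over $\beta$ and the inf over $\alpha\in S$ using the elementary fact that $|\sup F-\sup G|,\ |\inf F-\inf G|\le\sup|F-G|$. You are in fact slightly more careful than the paper in explicitly handling the continuity extension from the open rectangle $(0,A]\times(0,1)$ of Lemma~\ref{lem:concentration2} to the closed box $[0,A]\times[0,1]$; the paper's proof glosses over this.
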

\begin{proof}
To lighten notation we suppress the dependence of the stochastic function $\phi$ on random vectors $\vg$ and $\vh$. By Lemma~\ref{lem:concentration2} we have with probability at least $1-\zeta(n,\epsilon)$: for all $0 \leq \alpha \leq A$ and $0 \leq \beta \leq 1$, $\abs{\phi(\alpha,\beta)-\kappa(\alpha,\beta)} \leq \epsilon$. When this holds we have for any $S \subset [0,A]$:
\begin{align}
    \phi_{S} &= \inf_{\alpha \in S} \phi(\alpha) \leq \inf_{\alpha \in S} [\kappa(\alpha) + \epsilon] = \kappa_{S}+\epsilon, \\
    \phi_{S} &= \inf_{\alpha \in S} \phi(\alpha) \geq \inf_{\alpha \in S} [\kappa(\alpha)-\epsilon] = \kappa_{S}-\epsilon.
\end{align}
\end{proof}
We will shortly characterize $\kappa(\alpha,\beta)$ and $\kappa(\alpha)$ using  properties of the following quantity:
\begin{equation}
    \label{eq:DefDp}
    D_{p}(t;n) \bydef \left(\E[    \tfrac{1}{\sqrt{n}}\  \dist(\vh,  \norm{\,\cdot\,}_{p^*} \leq t)]\right)^{2}.
\end{equation}
    
\begin{lemma}[Deterministic properties of $D_p$]
    \label{lem:Dp_det_prop}
    Let $t \geq 0$, $1 \leq p \leq \infty$, and define
    \(
      \mathcal{C}_{t} = \mathcal{C}_{t,p} \bydef \set{\vx \in \R^{n}: \norm{\vx}_{p^{*}} \leq t}
    \)
    and $D_p(t;n)$ as in~\eqref{eq:DefDp}. Using $D_{p}(t)$ as a shorthand, the following hold:
    \begin{enumerate}
        \item The sets $\mathcal{C}_{t}$ are convex and nested with $\mathcal{C}_{t} \subsetneq \mathcal{C}_{t'}$ for $t < t'$;
        \item For any vector $\vh$, the function $t \mapsto \dist(\vh,\mathcal{C}_{t})$ is non-increasing and convex;
        \item $D_{p}(t)$ is a (strictly) decreasing convex function of $t$;
        \item $\lim_{t \to \infty} D_{p}(t) = 0$;
        \item $\frac{n}{n+1} \leq D_{p}(0) \leq 1$;
        \item The function $t\mapsto D_{p}(t)$ is infinitely differentiable;
        \item Let $g(t) = g(t;n) \bydef D_p(t) -\frac{t}{2}D'_{p}(t)$. For any $0 < \delta < D_{p}(0)$ there is a unique 
        \[t^{*} = t^{*}_{p}(\delta;n) \in (0,\infty)\] such that $g(t) > \delta$ for $t < t^*$ and $g(t) < \delta$ for $t > t^*$. It holds that $D_{p}(t^{*}) < g(t^{*}) = \delta$.
    \end{enumerate}
\end{lemma}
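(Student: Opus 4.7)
\medskip

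\noindent\textbf{Proposal.} The plan is to dispatch the seven items in order, since each rests on the previous. Items 1 and 2 are structural: $\mathcal{C}_{t}$ is the $t$-sublevel set of the convex function $\vx\mapsto \norm{\vx}_{p^{*}}$, hence convex; strict nesting $\mathcal{C}_{t}\subsetneq\mathcal{C}_{t'}$ for $t<t'$ is witnessed by any $\vx$ with $t<\norm{\vx}_{p^{*}}\leq t'$. For the distance, I would invoke the dual representation $\dist(\vh,\mathcal{C}_{t}) = \sup_{\norm{\vu}_{2}\leq 1}(\vu^{\T}\vh - t\norm{\vu}_{p})$, which is a pointwise supremum of functions affine in $t$, hence convex; monotonicity in $t$ is immediate from nesting.

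For item 3, write $\sqrt{D_{p}(t)} = \tfrac{1}{\sqrt{n}}\E[\dist(\vh,\mathcal{C}_{t})]$, which is non-negative, non-increasing, and convex as an expectation of such functions. Squaring preserves convexity because $(f^{2})''=2(f')^{2}+2f f''$ is non-negative when $f\geq 0$ is convex. Strict decrease follows because $\prob\{\vh\notin \mathcal{C}_{t'}\}>0$ by full support of the Gaussian, and on that event $\dist(\vh,\mathcal{C}_{t})>\dist(\vh,\mathcal{C}_{t'})$ whenever $t<t'$. Item 4 is dominated convergence: $\dist(\vh,\mathcal{C}_{t})\to 0$ pointwise as $t\to\infty$, dominated by $\norm{\vh}_{2}$. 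For item 5, $\mathcal{C}_{0}=\{\vec{0}\}$ so $\sqrt{D_{p}(0)}=\E\norm{\vh}_{2}/\sqrt{n}$; Jensen gives $(\E\norm{\vh}_{2})^{2}\leq n$, while Wendel's inequality at $x=n/2$, $s=1/2$ yields $\Gamma((n+1)/2)/\Gamma(n/2)\geq \sqrt{n^{2}/(2(n+1))}$, which combined with $\E\norm{\vh}_{2} = \sqrt{2}\,\Gamma((n+1)/2)/\Gamma(n/2)$ delivers $D_{p}(0)\geq n/(n+1)$.

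The main obstacle is item 6, smoothness. I would perform the rescaling $\vh=t\vg$, valid for $t>0$, to write
\begin{equation*}
    \E[\dist(\vh,\mathcal{C}_{t})] = t^{n+1}(2\pi)^{-n/2}\int \dist(\vg,\mathcal{C}_{1})\, \e^{-t^{2}\norm{\vg}_{2}^{2}/2}\, d\vg.
\end{equation*}
The integrand is $C^{\infty}$ in $t$ with all $t$-derivatives of the form $P(t,\norm{\vg}_{2})\,\e^{-t^{2}\norm{\vg}_{2}^{2}/2}$ for polynomials $P$; on any compact subinterval of $(0,\infty)$ they are dominated by integrable functions (using $\dist(\vg,\mathcal{C}_{1})\leq \norm{\vg}_{2}$), so differentiation under the integral is legitimate. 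Hence $\sqrt{D_{p}(\cdot\,;n)}$ is $C^{\infty}$ on $(0,\infty)$ and strictly positive there, so the same holds for $D_{p}(\cdot\,;n)$.

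Finally, item 7 is a monotonicity argument. Direct computation gives $g'(t)=\tfrac{1}{2}D'_{p}(t)-\tfrac{t}{2}D''_{p}(t)$, which is $\leq 0$ (decreasing plus convex) and in fact $<0$ for $t>0$ because $D_{p}$ is \emph{strictly} convex: with $f=\sqrt{D_{p}(\cdot\,;n)}$ we have $D''_{p}=2(f')^{2}+2ff''$, and $f'(t)=0$ for some $t_{0}$ would, by convexity of $f$, force $f$ constant on $[t_{0},\infty)$, contradicting $f(\infty)=0$ from item~4. The boundary values are $g(0^{+})=D_{p}(0)$ (since $D_{p}$ is right-differentiable at $0$ and the factor $t$ kills the derivative term) and $g(\infty)=0$, using that convexity plus $D_{p}\to 0$ gives $\abs{tD'_{p}(t)}\leq D_{p}(t)-D_{p}(2t)\to 0$. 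Since $g$ is continuous and strictly decreasing from $D_{p}(0)$ to $0$, the equation $g(t)=\delta$ has a unique root $t^{*}\in(0,\infty)$ for every $0<\delta<D_{p}(0)$; and $g(t^{*})-D_{p}(t^{*})=-\tfrac{t^{*}}{2}D'_{p}(t^{*})>0$ because $D'_{p}(t^{*})<0$ by the strict monotonicity established in item~3.
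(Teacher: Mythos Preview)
Your proposal is essentially correct and parallels the paper, with a few cleaner sub-arguments: for item~2 you use the support-function identity $\dist(\vh,\mathcal{C}_t)=\sup_{\norm{\vu}_2\leq 1}(\vu^{\T}\vh - t\norm{\vu}_p)$ (a supremum of affine functions of $t$) where the paper invokes the perspective transform of $\dist(\cdot,\mathcal{C}_1)$; for item~4 you use dominated convergence against $\norm{\vh}_2$ where the paper bounds an explicit polar integral; and for $g(\infty)=0$ in item~7 you argue via a convexity bound on $tD_p'(t)$ where the paper differentiates the integral formula from item~6 and shows each term vanishes. These shortcuts are genuine simplifications.

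One slip: the inequality $|tD_p'(t)|\leq D_p(t)-D_p(2t)$ is backward---convexity gives $D_p(2t)\geq D_p(t)+tD_p'(t)$, hence $-tD_p'(t)\geq D_p(t)-D_p(2t)$. Anchor on the other side instead: $D_p(t/2)\geq D_p(t)-\tfrac{t}{2}D_p'(t)$ yields $-tD_p'(t)\leq 2\bigl(D_p(t/2)-D_p(t)\bigr)\to 0$, which is what you need. Separately, the strict-convexity detour for $g'<0$ is unnecessary: convexity plus strict decrease of $D_p$ already force $D_p'<0$ on $(0,\infty)$ (if $D_p'(t_0)=0$ then $D_p'\geq 0$ on $[t_0,\infty)$ by convexity, contradicting strict decrease), and then $g'=\tfrac12 D_p'-\tfrac{t}{2}D_p''<0$ follows directly from $D_p'<0$ and $D_p''\geq 0$; this is the paper's route.
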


\begin{proof}
\begin{enumerate}
  \item Obvious. 
  \inlineitem We recall that $\vx \mapsto \dist(\vx, {\cal C})$ is convex for convex $\cal C$ \cite[Example 3.16]{Boyd:2004uz}.
  Next, $(\vx, t) \mapsto t \dist(\vx / t, {\cal C})$ is convex in both
  arguments because it is the perspective of $\vx \mapsto \dist(\vx, {\cal
  C})$ \cite[Chapter 2]{Boyd:2004uz}. Applying this to ${\cal C} = {\cal C}_{1}$ and observing that $\dist(\vh,{\cal C}_{t}) = t\dist(\vh/t,{\cal C}_{1})$ we obtain that $\dist(\vh,{\cal C}_{t})$ is convex in $t$. The fact that it is non-increasing follows from Property~1.
  \item Since expectation of convex functions is convex, and the square of a non-negative convex function is convex, $D_p(t)$ is convex as claimed. That it is (strictly) decreasing is obvious.
  \item For any $\vy \in \R^{n}$ and $p\geq 1$, $\norm{\vy}_{p^{*}} \leq \norm{\vy}_{1} \leq \sqrt{n} \norm{\vy}_2$. Hence, for any given $t > 0$, $\dist(\vy, \mathcal{C}_{t}) = 0$ as soon as $\norm{\vy}_2 \leq t / \sqrt{n}$ and we can write for all $p \geq 1$ that
  \begin{align*}
    \E[  \dist(\vh, \mathcal{C}_{t}) ]
    &= \int_{\vb \in \mathbb{S}^{n-1}} \int_{r = t/\sqrt{n}}^{\infty} \dist(r\vb, \mathcal{C}_{t}) \ p_\vh(r\vb) \ \mu(\di \vb) \ r^{n-1} \, \di r \\
    &\leq \int_{\vb \in \mathbb{S}^{n-1}} \int_{r = t/\sqrt{n}}^{\infty} r \ p_\vh(r\vb) \ \mu(\di \vb) \ r^{n-1} \, \di r 
    = Z_n \mu(\mathbb{S}^{n-1}) \int_{r = t/\sqrt{n}}^{\infty} r^n \e^{-r^2 / 2} \ \di r, 
  \end{align*}
  where $Z_n = (2\pi)^{-n/2}$, $\mu(\mathbb{S}^{n-1}) = 2\pi^{n/2} / \Gamma \left( \frac{n}{2} \right)$, and $\Gamma \left( \cdot \right)$ being the gamma function. The last expression vanishes as $t \to \infty$.

  \item The upper bound follows from Property 3 and Jensen's inequality.
  To get the lower bound we compute $n D_p(0) = \left[ \E \norm{\vh}_2 \right]^2$ by integration in polar coordinates,
  \begin{align*}
      \E \{ \dist(\vh, \mathcal{C}_{0}) \} = \E \{ \norm{\vh}_2 \}
      & \ = \ \int_{\vb \in \mathbb{S}^{n-1}} \int_{r = 0}^{\infty} r \ p_\vh(r\vb) \ \mu(\di \vb) \ r^{n-1} \, \di r \\
      & \ = \ Z_n \mu(\mathbb{S}^{n-1}) \int_{r = 0}^{\infty} r^n \e^{-r^2/2} \, \di r \\
      & \ \stackrel{(a)}{=} \ (2 \pi)^{-n/2} \frac{ 2 \pi^{n / 2}}{\Gamma \left( \frac{n}{2} \right)} \int_{s = 0}^{\infty} 2^{\frac{n-1}{2}}s^{\frac{n-1}{2}} \e^{-s} \, \di r \\
      & \ \stackrel{(b)}{=} \ {\sqrt{2} \, \Gamma \left( \frac{n+1}{2} \right) } \bigg/ {\Gamma \left( \frac{n}{2} \right) },
  \end{align*}
  where in $(a)$ we used the substitution $u = r^2/2$, and in $(b)$ we invoked the definition of the gamma function, $\Gamma(z) = \int_0^\infty x^{z - 1} \e^{-x} \di x$. We now use the inequality of Wendel, \cite[Eq. (7)]{Wendel:1948fv},
  \(
      {\Gamma(x + a)} \big/ {\Gamma(x)} \geq x(x + a)^{a-1}
  \)
  to conclude that 
  \begin{equation}\label{eq:lowerBoundExpNormH}
  \E \{ \norm{\vh}_2 \} \geq n (n + 1)^{-1/2}
  \end{equation}
  and $D_p(0) \geq n / (n + 1)$.
  \item Using $\dist(\vh, \mathcal{C}_{t}) = t \dist(\vh / t, \mathcal{C}_1)$ and a change of variable $r = t \rho$ we obtain:
  \begin{align}
      q(t) \bydef \sqrt{n D_p(t)} = 
      Z_n t^{n+1} \int_{\vb \in \mathbb{S}^{n-1}} \int_{\rho = 0}^{\infty} \dist(\rho \vb, {\cal C}_1) \e^{-\rho^2 t^2 / 2}  \, \mu(\di \vb) \rho^{n-1} \, \di \rho 
      \label{eq:Dp_for_inf_diff}
  \end{align}
  so that $D_p(t)$ is infinitely differentiable (by the dominated convergence theorem). 
  \item In particular, as 
  \(
      \abs{\parder{}{t} \dist(\rho \vb, {\cal C}_1) \e^{-\rho^2 t^2 / 2} \rho^{n-1}} 
      = \dist(\rho \vb, {\cal C}_1) t \rho^{n+1}  \e^{-\rho^2 t^2 / 2}
      \leq  \rho^{n+2} t  \e^{-\rho^2 t^2 / 2}
  \)
  where the rightmost expression is integrable for every $t > 0$ and $n \in \N$, we can differentiate under the integral sign in~\eqref{eq:Dp_for_inf_diff} to get 
  \begin{align}
      -\tfrac{t}{2} D_p'(t) &= -\tfrac{t}{n} q(t) q'(t) \nonumber \\
      &= -\tfrac{1}{n} q(t) \left( (n+1) q(t) - Z_n t^{n+3} \int_{\mathbb{S}^{n-1}} \int_{0}^{\infty} \dist(\rho \vb, {\cal C}_1) \rho^{n+1} \e^{-\rho^2 t^2 / 2}  \, \mu(\di \vb) \, \di \rho \right).\label{eq:CpDevelopedExpression}
  \end{align}
  All terms can be seen to vanish as $t \to \infty$ by arguments analogous to those in the end of the proof of Property 4, hence $\lim_{t \to \infty} [-\tfrac{t}{2} D_p'(t)] = 0$.
   
  Since $D_p(t)$ is strictly decreasing we have $D_p'(t) < 0$. Since it is convex, $D_p''(t) \geq 0$. Thus
  \[
      g'(t) = D_p'(t) - \tfrac{1}{2} D_p'(t) - \tfrac{t}{2} D_p''(t) = \tfrac{1}{2} D_p'(t) - \tfrac{t}{2} D_p''(t) < 0
  \]
  for $t > 0$, meaning that $g(t)$ is strictly decreasing. Since $\lim_{t \to \infty} [-\tfrac{t}{2} D_p'(t)] = 0$ and $\lim_{t \to \infty} D_p(t) = 0$, we have $\lim_{t \to \infty} g(t) = 0$. It follows that for $0 < \delta < \lim_{t \to 0} g(t)$, there is a unique $t^*(\delta)$, such that $0 < t^*(\delta) < \infty$ and $g(t) > \delta$ for $t < t^*$ and $g(t) < \delta$ for $t > t^*$. Finally, since $g(t) \geq D_{p}(t)$ we have $\lim_{t \to 0} g(t) \geq D_{p}(0)$.
\end{enumerate}
\end{proof}

\begin{lemma}
\label{le:genericboundTp}
  Denote
  \begin{equation}\label{eq:DefTheta}
    \theta(t) \bydef \E (\abs{h}-t)_{+}^{2},
  \end{equation}
  with $h$ being a standard normal variable and $(\, \cdot \,)_{+} = \max(\, \cdot \, , \ 0)$ the positive part. This is a strictly decreasing function of $t$ with $\theta(0)=1$ and $\lim_{t \to \infty}\theta(t) = 0$. Recalling that $t^*_p(\delta;n)$ is the unique solution to $D_p(t; n) - \tfrac{t}{2} D_p'(t;n) = \delta$ on $(0, \infty)$, the following holds for all $1 \leq p \le \infty$, $0<\delta<1$:
  \begin{enumerate}
    \item\label{it:TMax} For any $n \geq 1$,
  \begin{equation}
    \label{eq:TMax}
    t^{*}_{p}(\delta;n) \leq  2\ \theta^{-1}(\delta)\  n^{1-1/p}.
  \end{equation}
  \item\label{it:TMin} For any $n \geq \tfrac{2}{1-\delta}$,
  \begin{equation}\label{eq:TMin}
    t^{*}_{p}(\delta;n) \geq \theta^{-1}(\tfrac{1+\delta}{2}) > 0.
  \end{equation}
  \item\label{it:TMin2} For any $n \geq  \max\left(\tfrac{2}{1-\delta},\tfrac{1}{\delta}\right)$,
  \begin{equation}\label{eq:TMin2}
    t^{*}_{p}(\delta;n) \geq \max\left(\theta^{-1}(2\delta),\theta^{-1}(\tfrac{1+\delta}{2})\right) > 0.
  \end{equation}
   \item\label{it:lowerBoundDp} There is a universal constant $C$ independent of $\delta$, $p$ and $n$ such that for all $n \geq 1$ we have
  \begin{equation}
    \label{eq:lowerBoundDp}  
    D_{p}(t^{*}_{p};n)  \geq \left({\delta}/{C}\right)^{2},\\
  \end{equation}
  where we use the shorthand $t^{*}_{p} = t^{*}_{p}(\delta;n)$.
  \end{enumerate}
\end{lemma}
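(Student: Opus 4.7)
My plan is to bound the function $g(t) \bydef D_p(t;n) - (t/2) D_p'(t;n)$ at well-chosen test values of $t$, using the fact (established in Lemma~\ref{lem:Dp_det_prop}) that $g$ is strictly decreasing with $g(t^*) = \delta$. The workhorse will be a two-sided sandwich comparing $D_p$ to the one-dimensional function $\theta$, namely
\begin{equation}\label{eq:plan-sandwich}
\theta(t) - \tfrac{1}{n} \;\leq\; D_p(t;n) \;\leq\; \theta\bigl(t/n^{1-1/p}\bigr), \qquad t \geq 0,\ 1 \leq p \leq \infty.
\end{equation}
This would follow from the ball inclusions $\{\vu:\norm{\vu}_\infty \leq t/n^{1-1/p}\} \subseteq \mathcal{C}_{t,p} \subseteq \{\vu:\norm{\vu}_\infty \leq t\}$ (themselves consequences of $\norm{\vu}_\infty \leq \norm{\vu}_{p^*} \leq n^{1-1/p}\norm{\vu}_\infty$) together with the identity $\E\dist(\vh, \{\norm{\cdot}_\infty \leq s\})^2 = n\,\theta(s)$; the $1/n$ slack on the left comes from the Gaussian Poincar\'e bound $\var[\dist(\vh, \mathcal{C})] \leq 1$ (Lemma~\ref{lem:concentration_results}\ref{lemitem:varoflip}) used to compare $[\E\dist]^2$ with $\E[\dist^2]$.

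Given \eqref{eq:plan-sandwich}, item~\ref{it:TMax} would follow by applying convexity of $D_p$ in the form $g(t) = D_p(t) - (t/2)D_p'(t) \leq D_p(t/2;n)$, so that $g(t) \leq \theta\bigl(t/(2 n^{1-1/p})\bigr)$; taking $T = 2\theta^{-1}(\delta) n^{1-1/p}$ makes the right-hand side equal to $\delta$, hence $t^* \leq T$ by strict monotonicity of $g$. Items~\ref{it:TMin} and~\ref{it:TMin2} would follow in the opposite direction from $g(t) \geq D_p(t;n) \geq \theta(t) - 1/n$: at $T = \theta^{-1}\bigl((1+\delta)/2\bigr)$ with $n \geq 2/(1-\delta)$, one has $g(T) \geq (1+\delta)/2 - (1-\delta)/2 = \delta$, forcing $t^* \geq T$; at $T = \theta^{-1}(2\delta)$ (informative only when $2\delta \leq 1$, since $\theta \leq 1$) with $n \geq 1/\delta$, one has $g(T) \geq 2\delta - \delta = \delta$.

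For item~\ref{it:lowerBoundDp} I would switch to $q(t) \bydef \E\dist(\vh, \mathcal{C}_{t,p})$, so that $D_p(t;n) = q(t)^2/n$ and $g(t) = q(t)\bigl[q(t) - t q'(t)\bigr]/n$. Since Lemma~\ref{lem:Dp_det_prop}--Property~2 gives convexity of $t \mapsto \dist(\vh,\mathcal{C}_t)$ (and hence of $q$), one has $q(0) \geq q(t) - t q'(t)$; and Jensen gives $q(0) = \E\norm{\vh}_2 \leq \sqrt{n}$. Evaluating $g(t^*) = \delta$ then yields $n\delta = q(t^*)\bigl[q(t^*) - t^* q'(t^*)\bigr] \leq q(t^*)\sqrt{n}$, so $q(t^*) \geq \sqrt{n}\,\delta$ and therefore $D_p(t^*;n) \geq \delta^2$, proving item~\ref{it:lowerBoundDp} with $C = 1$.

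The main obstacle is the two-sided sandwich \eqref{eq:plan-sandwich}. The upper half is a direct consequence of ball inclusions and Jensen, but the lower half requires exchanging $[\E\dist]^2$ and $\E[\dist^2]$ at a cost of $1/n$, and this cost is exactly what forces the $n$-dependent assumptions in items~\ref{it:TMin} and~\ref{it:TMin2}. Once \eqref{eq:plan-sandwich} is in hand, each of the four items reduces to a one-line convexity computation applied to $g$ (or to $q$ in item~\ref{it:lowerBoundDp}).
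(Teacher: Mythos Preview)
Your proposal is correct. For items~\ref{it:TMax}--\ref{it:TMin2} your argument coincides with the paper's: the sandwich~\eqref{eq:plan-sandwich} is precisely the combination of the paper's Steps~1, 3 and~5 (the paper routes through $D_1$ and $\bar{D}_1=\theta$ separately, but the content is identical), and the convexity step $g(t)\leq D_p(t/2)$ is the paper's Step~2.

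For item~\ref{it:lowerBoundDp}, however, your route is genuinely different and more elementary. The paper differentiates the polar-coordinate representation of $q(t)=\E\dist(\vh,\mathcal{C}_t)$ to obtain the identity $g(t)=\tfrac{q(t)}{n}\,\E\!\left[\dist(\vh,\mathcal{C}_t)\bigl(\norm{\vh}_2^2-n\bigr)\right]$, then centers, expresses the expectation as a tail integral, and invokes Gaussian concentration for both $\dist(\vh,\mathcal{C}_t)$ and $\norm{\vh}_2^2$ to conclude $\delta\leq C\sqrt{D_p(t^*)}$ with $C$ an explicit but unwieldy integral of exponential tails. You bypass all of this: writing $g(t)=q(t)\bigl[q(t)-tq'(t)\bigr]/n$ directly from $D_p=q^2/n$ and using convexity of $q$ (Lemma~\ref{lem:Dp_det_prop}, Property~2) to bound $q(t^*)-t^*q'(t^*)\leq q(0)=\E\norm{\vh}_2\leq\sqrt{n}$ gives $D_p(t^*)\geq\delta^2$ in one line, with the sharp constant $C=1$. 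The paper's concentration machinery buys nothing here; your argument is both shorter and sharper. (Incidentally, equating the two expressions for $g$ yields the Stein-type identity $q(t)-tq'(t)=\E\!\left[\dist(\vh,\mathcal{C}_t)(\norm{\vh}_2^2-n)\right]$.)
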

\begin{lemma}\label{le:boundp2}
With the notations of Theorem~\ref{thm:frob_of_l1}, for $p=2$,
\begin{equation}
\lim_{n \to \infty} -t^{*}_{2} D'_{2}(t^{*}_{2};n)  =  (1-\delta)\delta > 0, \quad \quad \quad
\lim_{n \to \infty}\alpha^{*}_{2}(\delta;n)  =  \tfrac{1}{\sqrt{1-\delta}}.
\label{eq:UnivLowBoundDprime2_and_AlphaD2}
\end{equation}
\end{lemma}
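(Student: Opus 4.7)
The key simplification for $p=2$ is that $p^*=2$, so $\mathcal{C}_t$ is the Euclidean ball and $\dist(\vh, \mathcal{C}_t) = (\|\vh\|_2 - t)_+$. With the rescaling $t = s\sqrt{n}$ and $R_n \bydef \|\vh\|_2/\sqrt{n}$, one finds
\[
D_2(s\sqrt{n};n) = [\E(R_n - s)_+]^2.
\]
The plan is to exploit the concentration $R_n \to 1$ (Lemma~\ref{lem:concentration_results}\ref{lemitem:gausnormabs}) to reduce the problem to a one-variable asymptotic and read off $t^*_2$, $D_2(t^*_2;n)$ and $-t^*_2 D_2'(t^*_2;n)$ in closed form.

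First I would establish the pointwise limit $\lim_n D_2(s\sqrt{n};n) = (1-s)_+^2$ by dominated convergence, using $R_n \to 1$ in $L^2$. Differentiating under the integral sign gives $D_2'(t;n) = -\tfrac{2}{n}\E(\|\vh\|_2 - t)_+ \cdot \prob(\|\vh\|_2 > t)$, and the same concentration argument yields $-t\, D_2'(s\sqrt{n};n) \to 2s(1-s)_+$ for $s \neq 1$. Summing, the function $g_n(s) \bydef D_2(s\sqrt{n};n) - \tfrac{s\sqrt{n}}{2}D_2'(s\sqrt{n};n)$ converges to $g_\infty(s) = (1-s)_+^2 + s(1-s)_+$, which equals $1-s$ on $[0,1]$ and vanishes for $s \geq 1$.

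Next I would identify the limit of $s_n \bydef t^*_2(\delta;n)/\sqrt{n}$. Lemma~\ref{le:genericboundTp}\ref{it:TMax} gives the uniform bound $s_n \leq 2\theta^{-1}(\delta)$, so every subsequence has a further convergent subsequence $s_{n_k}\to s^\infty$. Since each $g_n$ is monotone decreasing (Property~7 of Lemma~\ref{lem:Dp_det_prop}) and $g_\infty$ is continuous and strictly decreasing on $[0,1]$, pointwise convergence of monotone functions upgrades to uniform convergence on compacts, so $g_n(s_n)=\delta$ forces $s^\infty = g_\infty^{-1}(\delta) = 1-\delta$; all subsequential limits coincide, hence $s_n \to 1-\delta$. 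Substituting gives $D_2(t^*_2;n) \to (1-s^\infty)^2 = \delta^2$, and the defining identity $\delta = D_2(t^*_2;n) - \tfrac{t^*_2}{2} D_2'(t^*_2;n)$ then yields
\[
-t^*_2\, D_2'(t^*_2;n)\ \longrightarrow\ 2(\delta - \delta^2)\ >\ 0,
\]
which is strictly positive as claimed. Finally,
\[
\alpha^*_2(\delta;n) = \sqrt{\frac{D_2(t^*_2;n)}{\delta(\delta - D_2(t^*_2;n))}}\ \longrightarrow\ \sqrt{\frac{\delta^2}{\delta(\delta-\delta^2)}} = \frac{1}{\sqrt{1-\delta}}.
\]

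The main technical obstacle is transferring the pointwise convergence $g_n \to g_\infty$ into convergence of the preimages $g_n^{-1}(\delta)$, i.e., of the critical scalar $s_n$ itself. This is handled by combining three ingredients that live in different lemmata: the a priori compactness bound $s_n \leq 2\theta^{-1}(\delta)$ from Lemma~\ref{le:genericboundTp}\ref{it:TMax}, the monotonicity of $g_n$ from Property~7 of Lemma~\ref{lem:Dp_det_prop}, and the Gaussian concentration of $R_n$ from Lemma~\ref{lem:concentration_results} that underpins dominated convergence for both $D_2$ and $-tD_2'$. Once assembled, the rest is elementary.
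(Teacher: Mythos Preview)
Your approach is correct and essentially the same as the paper's: both rescale $t = s\sqrt{n}$, use concentration of $R_n = \|\vh\|_2/\sqrt{n}$ around $1$ to identify the pointwise limits of $D_2(s\sqrt{n};n)$ and $-(s\sqrt{n}/2)D_2'(s\sqrt{n};n)$, and then pass to the limit of $s_n = t^*_2/\sqrt{n}$. The paper derives explicit finite-$n$ upper and lower bounds for these quantities (by conditioning on $|R_n-1|\leq\epsilon$ with $\epsilon\sim n^{-1/4}$), whereas you invoke dominated convergence together with the Polya-type fact that monotone functions converging pointwise to a continuous limit converge uniformly on compacts; both routes are valid. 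You are in fact more explicit than the paper about why $g_n(s_n)=\delta$ forces $s_n\to 1-\delta$ (compactness from Lemma~\ref{le:genericboundTp}\ref{it:TMax} plus uniform convergence plus strict monotonicity of $g_\infty$ on $[0,1]$), a step the paper leaves implicit.

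One remark: your value $-t^*_2 D_2'(t^*_2;n)\to 2\delta(1-\delta)$ agrees with the paper's own Step~8, which computes $-(t^*_2/2)D_2'(t^*_2;n)\to \delta(1-\delta)$; the factor of $2$ missing in the lemma statement is a harmless typo, since only strict positivity of the limit is used downstream.
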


\begin{lemma}\label{le:boundp1}
 With the notations of Theorem~\ref{thm:frob_of_l1} for $p=1$, denoting $t^*_{1}(\delta) \bydef \sqrt{2} \cdot \erfc^{-1}(\delta)$,
\begin{align}
\lim_{n \to \infty}  -t^{*}_{1} D'_{1}(t^{*}_{1};n)  >  0\label{eq:UnivLowBoundDprime1}, \quad \quad
\alpha^{*}_{1}(\delta) \bydef \lim_{n \to \infty} \alpha^{*}_{1}(\delta;n) 
= \sqrt{\bigg(\sqrt{\tfrac{2}{\pi}} \e^{-\tfrac{(t_1^*)^2}{2}} t^{*}_{1} - \delta(t_1^*)^2\bigg)^{-1} -\tfrac{1}{\delta}}. \nonumber
\end{align}
\end{lemma}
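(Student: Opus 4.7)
The plan is to specialize $D_{p}(\cdot;n)$ and $t^{*}_{p}$ to $p=1$, identify their $n\to\infty$ limits in terms of the scalar function $\theta$ of Lemma~\ref{lem:theta_explicit}, and then read off both claims from its explicit derivative. Since $p^{*}=\infty$, the Euclidean projection onto $\mathcal{C}_{t}=\{\vu:\norm{\vu}_{\infty}\leq t\}$ is coordinatewise truncation, so $\dist(\vh,\mathcal{C}_{t})^{2}=\sum_{i=1}^{n}(|h_{i}|-t)_{+}^{2}$ and $\E[\dist(\vh,\mathcal{C}_{t})^{2}]=n\theta(t)$. Because $\vh\mapsto\dist(\vh,\mathcal{C}_{t})$ is $1$-Lipschitz, Lemma~\ref{lem:concentration_results}\ref{lemitem:varoflip} bounds its variance by $1$, so writing $(\E X)^{2}=\E[X^{2}]-\var(X)$ yields the two-sided estimate $\theta(t)-\tfrac{1}{n}\leq D_{1}(t;n)\leq \theta(t)$, hence uniform convergence $D_{1}(\cdot;n)\to\theta$ on $[0,\infty)$.

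Next I will transfer this to the defining equation for $t^{*}_{1}$. Each $D_{1}(\cdot;n)$ is convex by Lemma~\ref{lem:Dp_det_prop}--Property~3 and the limit $\theta$ is smooth, so pointwise convergence of convex functions promotes to pointwise convergence of derivatives on $(0,\infty)$, i.e.\ $D'_{1}(\cdot;n)\to\theta'$. Defining $g_{n}(t)\bydef D_{1}(t;n)-\tfrac{t}{2}D'_{1}(t;n)$ and $g(t)\bydef\theta(t)-\tfrac{t}{2}\theta'(t)=\erfc(t/\sqrt{2})$ (the last identity is Lemma~\ref{lem:theta_explicit}), strict monotonicity of both functions combined with pointwise convergence $g_{n}\to g$ then forces the unique roots of $g_{n}=\delta$ and $g=\delta$ to satisfy $t^{*}_{1}(\delta;n)\to t^{*}_{1}(\delta)=\sqrt{2}\,\erfc^{-1}(\delta)$. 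Continuity of $\theta'$ yields $-t^{*}_{1}(\delta;n)\,D'_{1}(t^{*}_{1}(\delta;n);n)\to -t^{*}_{1}(\delta)\,\theta'(t^{*}_{1}(\delta))$.

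Finally I evaluate this limit in closed form. Differentiating Lemma~\ref{lem:theta_explicit} gives $\theta'(t)=2t\erfc(t/\sqrt{2})-2\sqrt{2/\pi}\,\e^{-t^{2}/2}$, so with $t^{*}\bydef t^{*}_{1}(\delta)$ and $\erfc(t^{*}/\sqrt{2})=\delta$, I obtain
\[
-\tfrac{t^{*}}{2}\theta'(t^{*}) \ = \ \sqrt{\tfrac{2}{\pi}}\,\e^{-(t^{*})^{2}/2}\,t^{*} \ - \ \delta(t^{*})^{2} \ =: \ A,
\]
so $\lim_{n\to\infty}[-t^{*}_{1}\,D'_{1}(t^{*}_{1};n)]=2A$. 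Positivity $A>0$ will follow from the Mills ratio inequality $\erfc(x)<\e^{-x^{2}}/(x\sqrt{\pi})$ applied at $x=t^{*}/\sqrt{2}$. The identity $g(t^{*})=\delta$ rewrites as $\delta-\theta(t^{*})=A$, and substituting into the definition of $\alpha^{*}_{1}$ gives
\[
(\alpha^{*}_{1}(\delta))^{2} \ = \ \frac{\theta(t^{*})}{\delta\,(\delta-\theta(t^{*}))} \ = \ \frac{\delta-A}{\delta A} \ = \ \frac{1}{A} - \frac{1}{\delta},
\]
matching the stated formula. The main technical step I anticipate is the derivative-convergence upgrade $D'_{1}(\cdot;n)\to\theta'$ coupled with the moving argument $t^{*}_{1}(\delta;n)$, for which convexity of $D_{1}$ is essential; everything else reduces to explicit algebra on $\theta$ and Mills' inequality.
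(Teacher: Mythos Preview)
Your proposal is correct and closely parallels the paper's Step~9: the sandwich $\theta(t)-1/n\le D_{1}(t;n)\le\theta(t)$, the upgrade from pointwise convergence of convex functions to convergence of their derivatives, the identification $g(t)=\erfc(t/\sqrt{2})$ via Lemma~\ref{lem:theta_explicit}, the root convergence $t^{*}_{1}(\delta;n)\to t^{*}_{1}(\delta)$, and the final algebra for $\alpha^{*}_{1}$ are all the same. The one point of departure is how you certify positivity of the limit: you compute it explicitly as $2A$ and invoke the Mills ratio bound $\erfc(x)<\e^{-x^{2}}/(x\sqrt{\pi})$, whereas the paper first establishes a non-asymptotic lower bound $-t^{*}_{1}D'_{1}(t^{*}_{1};n)\ge V(\delta)-1/n$ with $V(\delta)=\inf_{[t_{\min},t_{\max}]}\{\theta(t)-\theta(2t)\}>0$, obtained by combining convexity of $D_{1}$ with the a~priori bracket $t^{*}_{1}(\delta;n)\in[t_{\min}(\delta),t_{\max}(\delta)]$ from Lemma~\ref{le:genericboundTp}. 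Your route is more direct for the lemma as stated; the paper's route directly produces the explicit $\gamma(\delta)$ and $N(\delta)$ that Theorem~\ref{thm:frob_of_l1} consumes, though those of course also follow once the limit is known to be strictly positive.
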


\begin{proof}[Proof of Lemmata~\ref{le:genericboundTp}--\ref{le:boundp2}--\ref{le:boundp1}]
~
\begin{itemize}
  \item {\bf Step 1.} We show that for all $p,n,t$ we have
  \begin{equation}
  \label{eq:D1Dp}
    D_p(t;n)  \geq D_1(t;n) \geq D_p\left(t n^{1-1/p};n\right).
  \end{equation}
  The inequalities $\norm{\, \cdot \,}_{p^{*}} \geq \norm{\, \cdot \,}_\infty 
  \geq n^{-1/p^{*}} \norm{\, \cdot \,}_{p^{*}} = n^{-(1-1/p)} \norm{\, \cdot \,}_{p^{*}}$ imply  $\mathcal{C}_{t,p} \subset \mathcal{C}_{t,1} \subset \mathcal{C}_{t n^{1-1/p},p}$. It follows that $d(\cdot,\mathcal{C}_{t,p}) \geq d(\cdot,\mathcal{C}_{t,1}) \geq d(\cdot,\mathcal{C}_{t n^{1-1/p},p})$, which yields~\eqref{eq:D1Dp}.
  \item {\bf Step 2.} We establish that for any $p,n,\delta$, with the shorthand $t^{*}_{p} = t^{*}_{p}(\delta;n)$,
  \begin{eqnarray}
    \label{eq:lowerBoundDp1}
    D_{p}(t^{*}_{p}/2;n) &\geq& \delta,\\
    \label{eq:upperBoundDp}  
    D_{p}(t^{*}_{p};n)  & \leq & \delta.
  \end{eqnarray}
       
  With another shorthand $D_{p}(t) \bydef D_{p}(t;n)$ and the convexity of $D_{p}(t)$, we have for all $t,h$
  \begin{equation}
  \label{eq:DpConvex}
  D_{p}(t+h) \geq D_{p}(t) + h D'_{p}(t).
  \end{equation}
  Applying it to $t=t^{*}_{p}$ and $h=-t^{*}_{p}/2$ and using the definition of $t^{*}_{p}$, we get 
  \[
  D_{p}(t^{*}_{p}/2) \geq D_{p}(t^{*}_{p})-(t^{*}_{p}/2)D'_{p}(t^{*}_{p}) = \delta,
  \] 
  i.e.,~\eqref{eq:lowerBoundDp1} holds.  Since $D_{p}(t)$ is non-increasing, we have $D'_{p}(t) \leq  0$ and $D_{p}(t) -\tfrac{t}{2} D_p'(t) \geq D_{p}(t)$ for any $t$. Applying to $t=t^{*}_{p}$ this establishes~\eqref{eq:upperBoundDp} by definition of $t^{*}_{p}$. 
  
  \item {\bf Step 3.} Let $\bar{D}_{p}(t;n) \bydef \tfrac{1}{n} \E~\dist^2(\vh, \norm{\,\cdot\,}_{p^*} \leq t)$. By Jensen's inequality, for any $p,n,t$:
  \begin{equation}
     D_{p}(t;n) =  \tfrac{1}{n}\big(\E~\dist(\vh, \norm{\,\cdot\,}_{p^*} \leq t)\big)^{2} \leq \tfrac{1}{n} \E~\dist^2(\vh, \norm{\,\cdot\,}_{p^*} \leq t) = \bar{D}_{p}(t;n).
     \label{eq:UpperBoundDpBarDp}
  \end{equation}
  For $p=1$, $p^* =\infty$ we compute
  \begin{align}
    \bar{D}_{1}(t) \bydef \tfrac{1}{n}\E~\dist^2(\vh, \norm{\,\cdot\,}_{\infty} \leq t) 
    = \tfrac{1}{n} \E~\norm{\vh - \proj_{\norm{\,\cdot\,}_{\infty} \leq t} \vh}_{2}^2
     = \tfrac{1}{n} \sum_{i = 1}^n \E  (\abs{h_{i}} - t)_{+}^2 = \theta(t).
    \label{eq:barD1theta}
  \end{align}

  \item {\bf Step 4.} We get \eqref{eq:TMax} by combining the previous steps. For any $p,\delta,n$:
  \begin{eqnarray*}
    \theta(t^{*}_{p} n^{1/p-1}/2) & \stackrel{\eqref{eq:barD1theta}}{=} & \bar{D}_{1}(t^{*}_{p} n^{1/p-1}/2)
     \stackrel{\eqref{eq:UpperBoundDpBarDp}}{\geq} D_{1}(t^{*}_{p} n^{1/p-1}/2) \stackrel{\eqref{eq:D1Dp}}{\geq} D_{p}(t^{*}_{p}/2) \stackrel{\eqref{eq:lowerBoundDp1}}{\geq} \delta.
  \end{eqnarray*}

  \item {\bf Step 5.}  Since the function $f: \vh \mapsto f(\vh) \bydef \dist(\vh, \norm{\,\cdot\,}_{p^{*}} \leq t)$ is $1$-Lipschitz, and $\vh$ standard normal, we can apply Lemma \ref{lem:concentration_results}\ref{lemitem:varoflip}
  to show that for any $p,n,t$,
  \begin{equation}\label{eq:LowerBoundDpBarDp}
    D_p(t;n) \geq \bar{D}_{p}(t;n) -\tfrac{1}{n}.
  \end{equation}

  \item {\bf Step 6.} Combining with the previous steps yields for any $p,\delta$, and $n>\tfrac{1}{1-\delta}$:
  \begin{eqnarray*}
  \theta(t^{*}_{p}) & \stackrel{\eqref{eq:barD1theta}}{=} & \bar{D}_{1}(t^{*}_{p})
    \stackrel{\eqref{eq:LowerBoundDpBarDp}}{\leq} D_{1}(t^{*}_{p})+\tfrac{1}{n} 
    \stackrel{\eqref{eq:D1Dp}}{\leq} D_{p}(t^{*}_{p})+\tfrac{1}{n} \stackrel{\eqref{eq:upperBoundDp}}{\leq} \delta+\tfrac{1}{n} < 1.
  \end{eqnarray*}
  For $n \geq\tfrac{2}{1-\delta}$, we have $\delta+\tfrac{1}{n} \leq \tfrac{1+\delta}{2} < 1$ which yields~\eqref{eq:TMin}. For $n \geq \max(\tfrac{2}{1-\delta},1/\delta)$, we have $\delta+\tfrac{1}{n} \leq \min(\tfrac{1+\delta}{2},2\delta) < 1$ which yields~\eqref{eq:TMin2}.
      
  \item {\bf Step 7.} To establish~\eqref{eq:lowerBoundDp} we  start with the expression~\eqref{eq:CpDevelopedExpression} in Cartesian coordinates, 
  \begin{align}
    \nonumber
    &-\tfrac{t}{2} D_p'(t)
    = -\tfrac{1}{n} q(t) \left( (n+1) q(t) - \int_{\R^n} t \dist(\vy / t, \mathcal{C}_1) \norm{\vy}_2^2 p_\vh(\vy) \di \vy  \right) \\
    &\Longrightarrow -\tfrac{t}{2} D_p'(t) + \tfrac{n+1}{n} q^{2}(t)    = \frac{q(t)}{n} \E \{ \dist(\vh, \mathcal{C}_t) \norm{\vh}_2^2\}. 
     \label{eq:lbnd_Cp}
  \end{align}
  Since $\tfrac{n+1}{n} q^2(t) = (n+1)D_p(t)$ we can rewrite \eqref{eq:lbnd_Cp} using the definition of $D_p(t)$ as
  \begin{equation}
    \label{eq:Dp_plus_Cp}
    D_p(t) -\tfrac{t}{2} D_p'(t)  = \frac{q(t)}{n} \E[\dist(\vh, {\cal C}_t) (\norm{\vh}_2^2 - n)].
  \end{equation}
  Observing further that $\E [\norm{\vh}_2^2 - n] = 0$ and $\E [\dist(\vh, {\cal C}_t)] = q(t)$, the following holds:
  \begin{eqnarray*}
    D_p(t) -\tfrac{t}{2} D_p'(t) 
    & = & \frac{q(t)}{n}\ \E\left\{(\dist(\vh, {\cal C}_t) - q(t)) (\norm{\vh}_2^2 - n)\right\} \\
    & \leq & \sqrt{D_{p}(t)}\ \E\left\{ \bigg|\dist(\vh, {\cal C}_t) - q(t)\bigg| \abs{\tfrac{\norm{\vh}_2^2 - n}{\sqrt{n}}} \right\} \\
    & = & \sqrt{D_{p}(t)}\  \int_0^{\infty} \prob \left\{ \bigg|\dist(\vh, {\cal C}_t) - q(t)\bigg| \abs{\tfrac{\norm{\vh}_2^2 - n}{\sqrt{n}}} \geq \epsilon \right\} \di \epsilon. \\
  \end{eqnarray*}
  The integrand can be controlled by a union bound as
  \[
    \prob \left\{ \bigg|\dist(\vh, {\cal C}_t) - q(t)\bigg| \abs{\tfrac{\norm{\vh}_2^2 - n}{\sqrt{n}}} \! \geq \! \epsilon \right\}
    \leq 
    \prob \left\{ \bigg|\dist(\vh, {\cal C}_t) \! - \! q(t)\bigg| \geq \sqrt{\epsilon} \right\}
    +
    \prob \left\{ \abs{\tfrac{\norm{\vh}_2^2 - n}{\sqrt{n}}} \geq \sqrt{\epsilon} \right\}
  \]
  which together with Lemma \ref{lem:concentration_results}\ref{lemitem:lipgausabs} for the first term and Lemma \ref{lem:concentration_results}\ref{lemitem:norm2abs} for the second term yields $D_p(t) -\tfrac{t}{2} D_p'(t) \leq C_{n} \sqrt{D_{p}(t)}$. \eqref{eq:lowerBoundDp} follows by definition of $t_p^*$ and 
  \begin{eqnarray*}
    C_{n}  \bydef   \int_{0}^\infty 2\left(\e^{-\epsilon / 2} +
    \begin{cases}
      \e^{-\frac{\epsilon}{8}} & \text{for $0 \leq \epsilon \leq n$}, \\
      \e^{-\frac{\sqrt{\epsilon n}}{8}} &\text{for $\epsilon > n$}
    \end{cases}
    \right) \di \epsilon
        &\leq& \int_{0}^\infty 2\left(\e^{-\epsilon / 2} + \e^{-\frac{\epsilon}{8}} + \e^{-\frac{\sqrt{\epsilon}}{8}}  
        \right) \di \epsilon 
        \bydef C < \infty.
  \end{eqnarray*}
\item {\bf Step 8. (Proof of Lemma~\ref{le:boundp2})}. Since $\dist(\vh,\norm{\cdot}_{2} \leq t) = (\norm{\vh}_{2}-t)_{+}$, we have $D_{2}(t;n) = {\tfrac{1}{n}} \left(\E (\norm{\vh}_2-t)_+\right)^{2}$. With $d(t) \bydef \sqrt{D_{2}(t;n)} = \tfrac{1}{\sqrt{n}} \E (\norm{\vh}_2-t)_+$ we have
\(
d'(t) = -\tfrac{1}{\sqrt{n}} \E [ \ind{\norm{\vh}_2>t} ] = -\tfrac{1}{\sqrt{n}} \prob(\norm{\vh}_2>t)
\)
and $-(t/2)D'_{2}(t) = -t d(t) d'(t)$. With a change of variables $\tau = t/\sqrt{n}$, define $F(\tau;n)  \bydef  F_{1}(\tau;n)+F_{2}(\tau;n)$ with
\begin{eqnarray*}
F_{1}(\tau;n) & \bydef & D_{2}(\tau\sqrt{n};n)  =   \left\{\E \left(\tfrac{\norm{\vh}_2}{\sqrt{n}} -\tau\right)_+\right\}^{2},   \\
F_{2}(\tau;n) & \bydef & -\tfrac{\tau\sqrt{n}}{2}D'_{2}(\tau\sqrt{n};n) =  \E \left\{\tfrac{\norm{\vh}_2}{\sqrt{n}} -\tau\right\}_+ \cdot \tau \prob\left\{\tfrac{\norm{\vh}_2}{\sqrt{n}}>\tau\right\}.
\end{eqnarray*}
Since $\norm{\vh}_{2}$ concentrates around $\sqrt{n}$ for large $n$, we show below that for any $0<\tau<1$,
\begin{equation}
\label{eq:limF1F2}
\lim_{n \to \infty} F_{1}(\tau;n) \bydef F_1(\tau) = (1-\tau)^{2}
\ \ \text{and} \ \
\lim_{n \to \infty} F_{2}(\tau;n) \bydef F_2(\tau) = (1-\tau)  \tau.
\end{equation}
It follows that for $\delta \in (0,1)$ we have $\lim_{n \to \infty} F(1-\delta;n)=\delta$ and 
\begin{align*}
&\lim_{n \to \infty} \tfrac{t^{*}_{2}(\delta;n)}{\sqrt{n}} =  1-\delta,
&&\lim_{n \to \infty} -(t^{*}_{2}/2) D'_{2}(t^{*}_{2};n)  =  F_{2}(\delta) = (1-\delta)\delta,\\
&\lim_{n \to \infty }D(t^{*}_{2};n) =  F_{1}(\delta) = \delta^{2},
&&\lim_{n \to \infty}\alpha^{*}_{2}(\delta;n)  =  \sqrt{\tfrac{\delta^{2}}{\delta(\delta-\delta^{2})}} = 1/\sqrt{1-\delta}.
\end{align*}
which establishes~\eqref{eq:UnivLowBoundDprime2_and_AlphaD2}.

To prove \eqref{eq:limF1F2} we compute as follows for $0<\tau<1$ and $\epsilon>0$:
\begin{eqnarray*}
\E \left\{ {\norm{\vh}_2}/{\sqrt{n}}-\tau\right\}_+ 
&=& 
\E \left\{ \left({\norm{\vh}_2}/{\sqrt{n}}-\tau\right)_+ 
\ \Big|\ 
\abs{{\norm{\vh}_2}/{\sqrt{n}} - 1} > \epsilon \right\} 
\prob\left\{ \abs{{\norm{\vh}_2}/{\sqrt{n}} - 1} > \epsilon\right\}\\ 
& + & \E \left\{ ({\norm{\vh}_2}/{\sqrt{n}} -\tau)_+ 
\ \Big|\ 
\abs{{\norm{\vh}_2}/{\sqrt{n}} -1} \leq \epsilon \right\} 
\prob\left\{ \abs{{\norm{\vh}_2}/{\sqrt{n}} -1} \leq \epsilon\right\},\\
\prob\left\{{\norm{\vh}_2}/{\sqrt{n}}>\tau\right\}
&=& 
\prob\left\{{\norm{\vh}_2}/{\sqrt{n}}>\tau 
\ \Big|\ 
\abs{{\norm{\vh}_2}/{\sqrt{n}} - 1} > \epsilon \right\}
\prob\left\{ \abs{{\norm{\vh}_2}/{\sqrt{n}} - 1} > \epsilon\right\}\\ 
&+ &\prob \left\{ {\norm{\vh}_2}/{\sqrt{n}} >\tau
\ \Big|\ 
\abs{{\norm{\vh}_2}/{\sqrt{n}} -1} \leq \epsilon \right\}
\prob\left\{ \abs{{\norm{\vh}_2}/{\sqrt{n}} -1} \leq \epsilon\right\}.
\end{eqnarray*}
For any $0<\epsilon < \min(1-\tau, \tfrac{1}{2})$ we get 
\begin{align*}
    \hspace{2mm} &\prob\left\{ \abs{{\norm{\vh}_2}/{\sqrt{n}} - 1}  \leq \epsilon\right\} \stackrel{(a)}{\geq} 1-2\e^{-c n \epsilon^2},
    && \hspace{-20mm} \prob \left\{{\norm{\vh}_2}/{\sqrt{n}}>\tau
    \ \Big|\ 
    \abs{{\norm{\vh}_2}/{\sqrt{n}}-1} \leq \epsilon
    \right\} = 1, \\
    &\E \left\{{\norm{\vh}_2}/{\sqrt{n}}-\tau\right\}_+ \geq  (1 - \tau-\epsilon) (1 - 2e^{-c n \epsilon^2}),
    && \hspace{-20mm} \prob\left\{{\norm{\vh}_2}/{\sqrt{n}}>\tau\right\}  \geq 1-2e^{-c n \epsilon^2}, \\
    &\E \left\{ \left({\norm{\vh}_2}/{\sqrt{n}}-\tau\right)_+ 
    \ \Big|\ 
    \abs{{\norm{\vh}_2}/{\sqrt{n}}-1} \leq \epsilon
    \right\}
    \geq  1-\tau-\epsilon > 0,
\end{align*}
where $(a)$ follows from Lemma \ref{lem:concentration_results}\ref{lemitem:gausnormabs} by noting that for $0 < \epsilon \leq \tfrac{1}{2}$, 
$\sqrt{{1}/{(1 - \epsilon)}} \leq 1 + \epsilon$. Hence, with $\epsilon = (1-\tau)/n^{1/4}$, 
\begin{eqnarray}
F_{1}(\tau;n) & \geq & (1-\tau)^{2} (1-n^{-1/4})^{2} (1-2e^{-c (1-\tau)^{2} n^{1/2}})^{2}\label{eq:lowerBoundF1},\\
F_{2}(\tau;n) & \geq & \tau(1-\tau) (1-n^{-1/4}) (1-2e^{-c (1-\tau)^{2}n^{1/2}})^{2}.\label{eq:lowerBoundF2}
\end{eqnarray}

For an upper bound, let $c_n \bydef \tfrac{1}{\sqrt{n}} \E \norm{\vh}_2$. Since $\tfrac{n}{\sqrt{n+1}} \stackrel{\eqref{eq:lowerBoundExpNormH}}{\leq} \E \norm{\vh}_{2} \leq \sqrt{n}$ we have $1 \geq c_{n} \geq \sqrt{\tfrac{n}{n+1}} \geq 1 - \tfrac{1}{2n} \Rightarrow 0 \leq 2(1 - c_n) \leq 1/n$. By Jensen's inequality, for $0<\tau<1$,
\begin{eqnarray*}
\E \left\{\tfrac{\norm{\vh}_2}{\sqrt{n}}-\tau\right\}_+ 
&\leq &
\sqrt{\E \left\{\tfrac{\norm{\vh}_2}{\sqrt{n}}-\tau\right\}_+^2}
\leq
\sqrt{\E \left\{\tfrac{\norm{\vh}_2}{\sqrt{n}}-\tau\right\}^2}
=
\sqrt{1-2c_n \tau + \tau^2}\\
&=& \sqrt{ (1-\tau)^2+2(1-c_n)\tau}
 \leq  \sqrt{(1-\tau)^{2} + \tau/n} = (1-\tau) \sqrt{1+\tfrac{\tau}{(1-\tau)^{2}n}},
\end{eqnarray*}
so that 
\(
F_{1}(\tau;n) \leq (1-\tau)^{2} 
+\tau/n
\)
and
\(
F_{2}(\tau;n) \leq \tau(1-\tau) \sqrt{1+\tfrac{\tau}{(1-\tau)^{2}n}}
\leq \tau (1-\tau) \left(1+\tfrac{\tau}{(1-\tau)^{2}n}\right).
\)
Combining all of the above yields~\eqref{eq:limF1F2}.
  \item {\bf Step 9. (Proof of Lemma~\ref{le:boundp1})}.
By~\eqref{eq:TMax} and~\eqref{eq:TMin} we have for any $n \geq 2/(1-\delta)$
    \[
    0<t_{\min}(\delta) \bydef \theta^{-1}(\tfrac{1+\delta}{2}) \leq t^{*}_{1}(\delta;n) \leq 2\theta^{-1}(\delta) \bydef t_{\max}(\delta).
    \]    
    By the continuity and strict monotonicity of $\theta$,
  \(
  \displaystyle
  V(\delta) \bydef \inf_{t \in [t_{\min}(\delta),t_{\max}(\delta)]} 
  \left\{
  \theta(t)-\theta(2t)
  \right\} >0.
  \)
By the convexity of $D_{1}(t)$ with $h = t = t^{*}_{1}$ we get 
$D_{1}(2t^{*}_{1}) \geq D_{1}(t^{*}_{1}) + t^{*}_{1} D'_{1}(t^{*}_{1})$, hence
\begin{eqnarray*}
-t^{*}_{1}D'_{1}(t^{*}_{1};n) 
\geq D_{1}(t^{*}_{1};n)-D_{1}(2t^{*}_{1};n) 
\ \stackrel{\mathclap{\eqref{eq:UpperBoundDpBarDp} \&\eqref{eq:LowerBoundDpBarDp}}}{\geq} \ 
 \bar{D}_{1}(t^{*}_{1};n)-\bar{D}_{1}(2t^{*}_{1};n)-\tfrac{1}{n}
&\stackrel{\eqref{eq:barD1theta}}{=}& 
\theta(t^{*}_{1})-\theta(2t^{*}_{1}) -\tfrac{1}{n}\\
&\geq& V(\delta) -\tfrac{1}{n}.
\end{eqnarray*}
For $n \geq N(\delta) \bydef \max(2/V(\delta),2/(1-\delta))$ we obtain $-t^{*}_{1}D'_{1}(t^{*}_{1};n) \geq V(\delta)/2 \bydef \gamma(\delta) > 0$ which establishes~\eqref{eq:UnivLowBoundDprime1}.

By~\eqref{eq:UpperBoundDpBarDp}--\eqref{eq:barD1theta}--\eqref{eq:LowerBoundDpBarDp} we have $\theta(t)-1/n \leq D_{1}(t;n) \leq \theta(t)$ for all $t$ and $n$. Hence, the sequence of convex differentiable functions $\{D_{1}(\, \cdot \, ;n)\}_{n}$ converges uniformly to the convex and smooth function $\theta(t)$ which implies convergence of the derivatives,
$\lim_{n \to \infty} D'_{1}(t;n) = \theta'(t)$. As for $p=2$, this shows $\lim_{n \to \infty} \left\{D_{1}(t;n)-(t/2) D'_{1}(t;n)\right\} = \theta(t)-(t/2)\theta'(t)$.
By Lemma~\ref{lem:theta_explicit}, $\theta(t)-(t/2)\theta'(t) = \erfc(t / \sqrt{2})$ so the unique $t = t^*_{1}(\delta)$ such that $\theta(t)-(t/2)\theta'(t) = \delta$ is 
\(
t^{*}_{1}(\delta) = \sqrt{2}\cdot\erfc^{-1}(\delta).
\)
Reasoning as for the case $p=2$ we get that
\begin{align*}
    \lim_{n \to \infty} t^{*}_{1}(\delta;n) & =  t^{*}_{1}(\delta),
    &\lim_{n \to \infty} -\tfrac{t^{*}_{1}}{2} D'_{1}(t^{*}_{1};n) & =  -\tfrac{t^*_{1}(\delta)}{2} \theta'(t^*_{1}(\delta)) > 0,\\
    \lim_{n \to \infty} D_{1}(t^{*}_{1};n) & =  \theta(t^*_{1}(\delta)),
    &\lim_{n \to \infty} \alpha^{*}_{1}(\delta;n) & =  \sqrt{\frac{\theta(t^*_{1}(\delta))}{\delta(\delta-\theta(t^*_{1}(\delta)))}}.
\end{align*}
Since $\erfc(t^{*}_{1}/\sqrt{2}) = \delta$ we have $\theta(t^{*}_{1}) = \delta -\bigg(\sqrt{\tfrac{2}{\pi}} \e^{-\tfrac{(t^{*}_{1})^{2}}{2}} t^{*}_{1} - \delta (t^{*}_{1})^{2}\bigg)$ and thus
\begin{equation}
    \lim_{n \to \infty} \alpha_{1}^{*}(\delta;n) = \sqrt{\frac{\delta-(\delta-\theta(t^*_{1}))}{\delta(\delta-\theta(t^*_{1}))}} 
    = \sqrt{ \bigg( \sqrt{\tfrac{2}{\pi}} \e^{-\tfrac{(t_1^*)^2}{2}} t^{*}_{1} - \delta(t_1^*)^2\bigg)^{-1} - \tfrac{1}{\delta}} \ .
    \end{equation}
\end{itemize}
\end{proof}

\begin{lemma}[Deterministic properties of $\kappa$]\label{le:ArgMinKappa}
Let $m, n$, $1 \leq m<n$ be two integers, $\delta \bydef (m-1)/n$, $1 \leq p \leq \infty$, $\kappa(\alpha,\beta)$ defined in~\eqref{eq:DefKappa}, $D_{p}(t)$ defined in \eqref{eq:DefDp}.
The following hold:
\begin{enumerate}
\item The function $\kappa(\alpha,\beta)$ is convex--concave and proper on $[0,\infty) \times [0,\infty)$, hence the function 
\(
  \kappa(\alpha) \bydef \sup_{0 \leq \beta \leq 1} \kappa(\alpha,\beta)
\)
is convex on $[0,\infty)$, and for any $A>0$ the function
\(
  \underline{\kappa}_{A}(\beta) \bydef \inf_{0 \leq \alpha \leq A} \kappa(\alpha,\beta)
\)
is concave on $[0,\infty)$.

\item The scalar $t^{*} = t^{*}_{p}(\delta;n)$ (cf. Lemma~\ref{lem:Dp_det_prop}--Property 7) is well defined, with $D_{p}(t^{*}) < \delta$.
\item Define
\begin{equation}\label{eq:kappaminimizerexplicit}
\alpha^{*} = \alpha^{*}(\delta;n) \bydef \sqrt{\tfrac{D_{p}(t^{*})}{\delta(\delta-D_{p}(t^{*}))}} \ .
\end{equation} 
For $A > \alpha^{*}$ and $\lambda \leq t^{*}$ we have
\begin{equation}\label{eq:kappaminimizer}
\argmin_{\alpha : 0 \leq \alpha \leq A} \max_{\beta: 0 \leq \beta \leq 1} \kappa(\alpha,\beta) 
= \alpha^{*}.
\end{equation} 
The corresponding optimal $\beta$ is $\beta^{*} = \beta^{*}(\lambda,\delta;n) \bydef \lambda/t^{*}$.

\item For $A>\alpha^{*}$, $\lambda \leq t^{*}$, $0<\epsilon \leq \max(\alpha^{*},A-\alpha^{*})$ we have 
\begin{equation}\label{eq:DefOmega}
\inf_{\abs{\alpha-\alpha^{*}} \geq \epsilon} \kappa(\alpha)-\kappa(\alpha^{*}) 
= \inf_{\abs{\alpha-\alpha^{*}}=\epsilon} \kappa(\alpha)-\kappa(\alpha^{*}) \geq 
\omega(\epsilon) = \omega_{p}(\epsilon;n,\delta,\lambda) \bydef
\tfrac{\epsilon^2}{2} 
\tfrac{\lambda \delta/t^{*}}{(1 + \delta(\alpha^* + \epsilon)^2)^{3/2}}.
\end{equation}
For the considered range of $\lambda$ and $\epsilon$, we have $\omega(\epsilon) \leq 1/2$.
\end{enumerate}
\end{lemma}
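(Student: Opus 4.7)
The plan is to establish the four items in order, with Parts~1--2 being structural observations and Parts~3--4 requiring explicit computation exploiting the form $\kappa(\alpha,\beta) = \beta\sqrt{\delta\alpha^{2}+1} - \alpha\,\Delta_{p}(\beta;n,\lambda)$. For Part~1, I would argue term by term: $\beta\sqrt{\delta\alpha^{2}+1}$ is convex in $\alpha$ (square root of a positive convex function, multiplied by $\beta\geq 0$) and linear, hence concave, in $\beta$; for $-\alpha\,\Delta_{p}(\beta;n,\lambda)$, the identity $\dist(\beta\vh,\mathcal{C}_{\lambda,p})=\beta\,\dist(\vh,\mathcal{C}_{\lambda/\beta,p})$ exhibits $\beta\mapsto\dist(\beta\vh,\mathcal{C}_{\lambda,p})$ as the perspective transform of the convex map $t\mapsto\dist(\vh,\mathcal{C}_{t,p})$ (Lemma~\ref{lem:Dp_det_prop} Property~2), hence convex in $\beta$; expectation preserves convexity, so $\Delta_{p}$ is convex in $\beta$ and the full term is concave in $\beta$ and linear in $\alpha$. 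Convexity of $\kappa(\alpha)$ (supremum of convex functions) and concavity of $\underline{\kappa}_{A}(\beta)$ (infimum of concave functions) are then immediate. For Part~2, the assumption $1\leq m<n$ yields $\delta\leq(n-2)/n<n/(n+1)\leq D_{p}(0)$ via Lemma~\ref{lem:Dp_det_prop}-Property~5, so Property~7 applies and $t^{*}$ is well defined and unique; $D_{p}(t^{*})<\delta$ follows immediately from $g(t^{*})=\delta$ and $-\tfrac{t^{*}}{2}D_{p}'(t^{*})>0$ for $t^{*}>0$.

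For Part~3, the plan is to locate a saddle point $(\alpha^{*},\beta^{*})$ of $\kappa$ on $[0,A]\times[0,1]$; the saddle inequalities $\kappa(\alpha^{*},\beta)\leq\kappa(\alpha^{*},\beta^{*})\leq\kappa(\alpha,\beta^{*})$ then yield $\max_{\beta}\kappa(\alpha,\beta)\geq\kappa(\alpha,\beta^{*})\geq\kappa(\alpha^{*},\beta^{*})=\max_{\beta}\kappa(\alpha^{*},\beta)$, identifying $\alpha^{*}$ as the minimizer (uniqueness follows from strict convexity of $\kappa(\cdot,\beta^{*})$). I would find the saddle by first performing the inner minimization in $\alpha$: the stationarity condition $\beta\delta\alpha/\sqrt{\delta\alpha^{2}+1}=\Delta_{p}(\beta;n,\lambda)$, combined with the identity $\Delta_{p}(\beta;n,\lambda)=\beta\sqrt{D_{p}(\lambda/\beta;n)}$ for $\beta>0$, gives $\alpha^{2}=D_{p}(\lambda/\beta)/[\delta(\delta-D_{p}(\lambda/\beta))]$, and substituting back yields the clean expression $\min_{\alpha\geq 0}\kappa(\alpha,\beta)=\beta\sqrt{1-D_{p}(\lambda/\beta)/\delta}$. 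Changing variable to $u=\lambda/\beta\in[\lambda,\infty)$ reduces the outer maximization to $u\mapsto(\lambda/u)\sqrt{1-D_{p}(u)/\delta}$; a short calculation shows the stationarity in $u$ collapses to $D_{p}(u)-(u/2)D_{p}'(u)=\delta$, which uniquely selects $u=t^{*}$. This produces $(\alpha^{*},\beta^{*})=\bigl(\sqrt{D_{p}(t^{*})/[\delta(\delta-D_{p}(t^{*}))]},\,\lambda/t^{*}\bigr)$, with $\beta^{*}\in(0,1]$ from $\lambda\leq t^{*}$ and $\alpha^{*}<A$ by assumption.

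For Part~4, I would exploit the saddle and the strong convexity of the slice $\alpha\mapsto\kappa(\alpha,\beta^{*})$. Since $\kappa(\alpha)\geq\kappa(\alpha,\beta^{*})$ while $\kappa(\alpha^{*})=\kappa(\alpha^{*},\beta^{*})$ at the saddle, it suffices to bound $\kappa(\alpha,\beta^{*})-\kappa(\alpha^{*},\beta^{*})$ from below. A direct differentiation gives $\partial_{\alpha}^{2}\kappa(\alpha,\beta^{*})=\beta^{*}\delta/(1+\delta\alpha^{2})^{3/2}$, and because $\alpha^{*}$ is its unconstrained minimizer, a second-order Taylor expansion with remainder yields
\[
\kappa(\alpha,\beta^{*})-\kappa(\alpha^{*},\beta^{*})\geq\tfrac{\epsilon^{2}}{2}\cdot\tfrac{\beta^{*}\delta}{(1+\delta(\alpha^{*}+\epsilon)^{2})^{3/2}}=\omega(\epsilon)
\]
for $|\alpha-\alpha^{*}|=\epsilon$, upon lower-bounding $\partial_{\alpha}^{2}\kappa$ over the segment between $\alpha^{*}$ and $\alpha$ by evaluating at the farthest endpoint. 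The equality $\inf_{|\alpha-\alpha^{*}|\geq\epsilon}[\kappa(\alpha)-\kappa(\alpha^{*})]=\inf_{|\alpha-\alpha^{*}|=\epsilon}[\kappa(\alpha)-\kappa(\alpha^{*})]$ over the feasible interval $[0,A]$ then follows from convexity of $\kappa$ and its monotonicity on each side of its minimizer $\alpha^{*}$. Finally, $\omega(\epsilon)\leq 1/2$ is elementary: with $y=\delta(\alpha^{*}+\epsilon)^{2}$ and using $\beta^{*}\leq 1$, $\lambda/t^{*}\leq 1$, $\delta\leq 1$, $\epsilon^{2}\leq(\alpha^{*}+\epsilon)^{2}$, one has $\omega(\epsilon)\leq\tfrac{1}{2}\cdot y/(1+y)^{3/2}\leq 1/(3\sqrt{3})<1/2$.

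The only delicate step I anticipate is the algebraic reduction in Part~3, where the stationarity in $u$ must be manipulated into the defining equation of $t^{*}$; once that is in hand, everything else reduces to routine convex analysis together with a single second-derivative lower bound.
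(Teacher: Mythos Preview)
Your proposal is correct and follows essentially the same approach as the paper: Part~3 is handled in both by first minimizing over $\alpha$ to obtain $\underline{\kappa}_A(\beta)=\beta\sqrt{(\delta-D_p(\lambda/\beta))/\delta}$ and then showing that stationarity in $\beta$ (equivalently in $u=\lambda/\beta$) reduces to the defining equation $D_p(u)-(u/2)D_p'(u)=\delta$, while Part~4 is handled in both via strong convexity of the slice $\alpha\mapsto\kappa(\alpha,\beta^*)$. The only organizational differences are that the paper invokes Sion's theorem explicitly and tracks the feasibility of the inner $\alpha$-minimizer via an auxiliary threshold $\wt\beta$, whereas your saddle-point framing sidesteps this; and your bound $\omega(\epsilon)\leq y/[2(1+y)^{3/2}]\leq 1/(3\sqrt{3})$ with $y=\delta(\alpha^*+\epsilon)^2$ is actually cleaner than the paper's, which tacitly uses $\epsilon\leq\alpha^*$.
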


\begin{proof}[Proof of Lemma~\ref{le:ArgMinKappa}]
\hfill
\begin{enumerate}
\item It is obvious that $\kappa$ is proper. Convexity in $\alpha$ is easy to check by computing the second derivative. Concavity in $\beta$ follows from the convexity of $\beta \mapsto \Delta_{p}(\beta;\vh,\lambda)$ which is a distance to a convex set \cite[Example 3.16]{Boyd:2004uz}, and the fact that the expectation $\Delta_{p}(\beta,\lambda)$ of a convex function is convex.
As a result, $\kappa(\alpha)$ is convex and $\underline{\kappa}(\beta)$ is concave.
\item We have $\delta \bydef (m - 1) / n < 1$, and by Lemma~\ref{lem:Dp_det_prop}, Property 4, $D_{p}(0) \geq \tfrac{n}{n+1}$. Because we consider the underdetermined case, $1 \leq m < n$, we have $n \geq 2$ and
\[
\delta \leq (n - 2) / n \leq n / (n + 1) \leq D_p(0).
\]

\item Since $\kappa(\alpha,\beta)$ is convex--concave and proper, and the constraint sets in~\eqref{eq:kappaminimizer} convex and compact, we can change the order of maximization and minimization \cite[Corollary 3.3]{Sion:1958jm}. 

\textbf{(Minimization over $\alpha$)}~For $\beta = 0$ and any $\lambda \geq 0$ we have $\Delta_{p}(\beta,\lambda) = 0$ hence $\underline{\kappa}_{A}(0) = \inf_{0 \leq \alpha \leq A} \kappa(\alpha,0) = 0$. For $\beta>0$, observing that $\Delta_{p}(\beta,\lambda) = \beta \sqrt{D_{p}(\lambda/\beta)}$ we rewrite 
\[
\kappa(\alpha,\beta) = \beta(\sqrt{\delta\alpha^{2}+1}-\alpha \sqrt{D_{p}(\lambda/\beta)}).
\]
With $D_{p}^{-1}(y) \bydef \inf \set{t: D_{p}(t) \leq y}$ and $D_{p}$ strictly decreasing, we have
\(
  D_{p}(\lambda/\beta) \leq \frac{A^{2}\delta^{2}}{1+\delta A^{2}}
\)
if and only if $0<\beta \leq \wt{\beta} \bydef \tfrac{\lambda}{D_{p}^{-1}\left(A^{2}\delta^{2}/(1+\delta A^{2})\right)}$. Since $\tfrac{A^{2}\delta^{2}}{1+\delta A^{2}} < \delta$, this implies that for $0<\beta \leq \wt{\beta}$ we can define
\begin{equation}
     \wt{\alpha}(\beta) \bydef \sqrt{\frac{D_p(\lambda/\beta)}{\delta(\delta - D_{p}(\lambda/\beta))}}.
\end{equation}
and check that $\wt{\alpha}(\beta) \leq A$. By studying the sign of $\parder{\kappa(\alpha,\beta)}{\alpha} = \beta\left(\tfrac{\delta\alpha}{\sqrt{\delta \alpha^{2}+1}}-\sqrt{D_{p}(\lambda/\beta)}\right)$, we get that $\alpha \mapsto\kappa(\alpha,\beta)$ has a unique minimizer on $[0,A]$ which is precisely $\alpha^*(\beta) = \wt{\alpha}(\beta)$. It follows that for $0<\beta \leq \wt{\beta}$ we have:
 \begin{equation}\label{eq:ArgMinAlphaGivenBeta}
   \underline{\kappa}_{A}(\beta) = \min_{0 \leq \alpha \leq A} \kappa(\alpha,\beta) = \kappa(\alpha^*(\beta), \beta) = \beta \sqrt{\frac{\delta - D_p(\lambda/\beta)}{\delta}}. 
\end{equation}

\textbf{(Maximization over $\beta$)}~The sign of $\underline{\kappa}'_{A}(\beta)$ is that of $\delta-g(\lambda/\beta)$ with $g(t) \bydef D_{p}(t)-\tfrac{t}{2}D'_{p}(t)$ as in Lemma~\ref{lem:Dp_det_prop}--Property~7. Hence, we have: $\underline{\kappa}'_{A}(\beta) >0$ if $t=\lambda/\beta>t^{*}$ (that is to say if $\beta < \beta^{*} \bydef \lambda/t^{*}$); $\underline{\kappa}'_{A}(\beta) <0$ if $\beta > \beta^{*}$; and $D_{p}(t^{*}) < \delta$.

$A>\alpha^{*}$ implies $D_{p}(t^{*}) < A^{2}\delta^{2}/(1+\delta A^{2})$, i.e., $\beta^{*} < \wt{\beta}$. Combined with the fact that $\underline{\kappa}(0) = 0$, this shows that the supremum of $\underline{\kappa}(\beta)$ over $[0,\wt{\beta}]$ is achieved uniquely at $\beta^{*}$. This also implies that $\underline{\kappa}(\beta)$ is strictly decreasing for $\beta^{*} < \beta \leq \wt{\beta}$. Being concave, $\underline{\kappa}_{A}(\beta)$ must be also strictly decreasing for $\beta \geq \wt{\beta}$, so the supremum over $[0,\infty)$ is indeed achieved at $\beta^{*}$.
Since $\lambda \leq t^{*}$ we further have $\beta^{*} \leq 1$ hence this is also the supremum over $\beta \in [0,1]$.

To summarize, the optimal $\beta$ is $\beta^{*} = \lambda/t^{*}$, and the corresponding optimal $\alpha$ is given as
\begin{equation}
    \alpha^*(\beta^*) = \wt{\alpha}(\lambda/t^{*}) = \sqrt{\frac{D_p(t^*)}{\delta(\delta - D_p(t^*))}}.
\end{equation}

\item The assumption $\epsilon \leq \max(\alpha^{*},A-\alpha^{*})$ ensures that the set $\set{\alpha: \abs{\alpha-\alpha^{*}} \geq \epsilon,\ 0 \leq \alpha \leq A}$ is not empty. Since $\kappa(\alpha)$ is convex on $[0,\infty)$ with its minimum at $\alpha^{*}$, we have
    \begin{align*}
       \omega(\epsilon)
       &\bydef \inf_{\substack{\alpha : \abs{\alpha-\alpha^*} \geq \epsilon\\ 0 \leq \alpha \leq A}} \kappa(\alpha)-\kappa(\alpha^{*})
       = \min_{\substack{\alpha : \abs{\alpha - \alpha^*} = \epsilon\\ 0 \leq \alpha \leq A}} \kappa(\alpha) - \kappa(\alpha^*).
    \end{align*}

    Since $A>\alpha^{*}$ and $\lambda \leq t^{*}$, we have $0 < \beta^* < 1$. The second derivative of $\alpha \mapsto \kappa(\alpha, \beta^*)$ with respect to $\alpha$ reads
    \(
        \parderr{\kappa(\alpha, \beta^*)}{\alpha} = \frac{\beta^* \delta}{(1 + \delta \alpha^2)^{3/2}} > 0.
    \)
   implying that on $[0,A] \cap [\alpha^* - \epsilon, \alpha^* + \epsilon]$ the function $\alpha \mapsto \kappa(\alpha, \beta^*)$ is strongly convex with strong convexity modulus $\frac{\beta^* \delta}{(1 + \delta(\alpha^* + \epsilon)^2)^{3/2}}$. Since $\alpha \mapsto \kappa(\alpha,\beta^{*})$ is minimum at $\alpha^{*}$, it holds that
    \(
        \kappa(\alpha^* \pm \epsilon, \beta^*) \geq \kappa(\alpha^*, \beta^*) + \frac{\epsilon^2}{2} \frac{\beta^* \delta}{(1 + \delta(\alpha^* + \epsilon)^2)^{3/2}}.
    \)
    Furthermore, from the definition of $\kappa(\alpha)$ and $\beta^*$, we have that $\kappa(\alpha) \geq \kappa(\alpha, \beta^*)$ for any $\alpha$, with equality for $\alpha = \alpha^*$. The claim therefore follows.
    
    Since $\lambda \leq t^{*}$ and $\epsilon \leq \alpha^{*}$ we have $\omega(\epsilon) \leq \tfrac{\delta (\alpha^{*})^{2}}{2(1+\delta (\alpha^{*})^{2})^{3/2}} = f\left((1+\delta(\alpha^{*})^{2})^{-1/2}\right)$ with $f(u) \bydef \tfrac{1}{2} (1/u^{2}-1)u^{3} = \tfrac{u-u^{3}}{2} \leq 1/2$ for $0 < u \leq 1$. Hence, $\omega(\epsilon) \leq 1/2$.
        \end{enumerate}
\end{proof}
Invoking a lemma from \cite{Hjort:1993tq} we show that $\argmin_\alpha \phi(\alpha)$ concentrates around $\argmin_{\alpha}
\kappa(\alpha)$.

\begin{lemma}[{\cite[Lemma 2]{Hjort:1993tq}}] \label{le:hjortpollard}
Let $f(t)$ be a random convex function on some
    open set $S \subset \R^p$, and let $t_f$ be (one of) its minimizer(s). Consider
    another function $g(t)$ (which we interpret as approximating $f$), such
    that it has a unique argmin $t_g$. Then for each $\epsilon > 0$, we have
    that:
    \begin{equation}
        \label{eq:hjortpollard}
        \prob \{ \norm{t_f - t_g}_2 \geq \epsilon \} \leq \prob \left\{\sup_{\norm{s - t_g}_2
        \leq \epsilon} \abs{f(s) - g(s)} \geq \tfrac{1}{2} \inf_{\norm{s - t_g}_2 = \epsilon} (g(s) - g(t_g)) \right\}.
    \end{equation}
\end{lemma}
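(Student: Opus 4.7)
The plan is to establish the claim by a deterministic pointwise argument: on the event
\[
    E := \set{\sup_{\norm{s - t_g}_2 \leq \epsilon} \abs{f(s) - g(s)} < \tfrac{1}{2} \Delta}, \qquad \Delta := \inf_{\norm{s - t_g}_2 = \epsilon} (g(s) - g(t_g)),
\]
I will show that every minimizer $t_f$ of $f$ must satisfy $\norm{t_f - t_g}_2 < \epsilon$. Taking complements and applying monotonicity of probability then yields \eqref{eq:hjortpollard}.

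The key ingredient is a convexity-plus-radial-projection trick. Suppose for contradiction, on a sample point in $E$, that $\norm{t_f - t_g}_2 \geq \epsilon$, and introduce the boundary point
\[
    s^* := t_g + \epsilon \cdot \frac{t_f - t_g}{\norm{t_f - t_g}_2} = (1 - \lambda) t_g + \lambda t_f, \qquad \lambda := \frac{\epsilon}{\norm{t_f - t_g}_2} \in (0, 1],
\]
which sits exactly on the sphere of radius $\epsilon$ centred at $t_g$. Convexity of $f$ together with $f(t_f) \leq f(t_g)$ (which holds since $t_f$ minimizes $f$) gives $f(s^*) \leq (1 - \lambda) f(t_g) + \lambda f(t_f) \leq f(t_g)$. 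Consequently,
\[
    \Delta \leq g(s^*) - g(t_g) = [g(s^*) - f(s^*)] + [f(s^*) - f(t_g)] + [f(t_g) - g(t_g)] \leq [g(s^*) - f(s^*)] + [f(t_g) - g(t_g)],
\]
since the middle bracket is nonpositive. Both $s^*$ and $t_g$ lie in the closed ball of radius $\epsilon$ around $t_g$, so each of the two remaining bracketed terms is bounded in absolute value by $\sup_{\norm{s - t_g}_2 \leq \epsilon} \abs{f(s) - g(s)}$, and the triangle inequality gives $\Delta \leq 2 \sup_{\norm{s - t_g}_2 \leq \epsilon} \abs{f(s) - g(s)}$, contradicting the definition of $E$.

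I do not anticipate a genuine obstacle: the argument is entirely deterministic, and the only tools are convexity of $f$ and the elementary inequality $f(t_f) \leq f(t_g)$. Two minor subtleties deserve mention. First, $t_f$ need not be unique, but the argument works for any selection of a minimizer, since $f(t_f) \leq f(t_g)$ holds by definition for any such selection. Second, the uniqueness of $t_g$ is not actually invoked in the derivation of \eqref{eq:hjortpollard}; it matters only in applications, where uniqueness (together with some strict convexity of $g$) ensures $\Delta > 0$ so that the bound is non-vacuous.
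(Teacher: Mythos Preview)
Your proof is correct and is precisely the standard argument for this lemma; the paper does not supply its own proof but simply cites \cite[Lemma 2]{Hjort:1993tq}, where the same radial-projection-plus-convexity trick appears. Your remarks on the roles of uniqueness of $t_g$ and non-uniqueness of $t_f$ are also accurate.
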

The role of $f(t)$ and $t_{f}$ will be played by $\phi(\alpha)$ and $\alpha^{*}_{\phi}$; the role of $g(t)$ and $t_{g}$ by $\kappa(\alpha)$ and $\alpha^{*}$.
\begin{lemma}\label{le:empiricalminmax}
        Let $A > \alpha^{*}$, $\lambda \leq t^{*}$, with $\alpha^{*}$ defined as in Lemma~\ref{le:ArgMinKappa}--Equation~\eqref{eq:kappaminimizer} and $t^{*} = t^{*}_{p}(\delta;n)$ as in Lemma~\ref{lem:Dp_det_prop}--Property 7. Consider the random function $\phi(\alpha)$ defined as in Corollary~\ref{cor:empiricalmaxbeta}, and
        \(
        \alpha^*_\phi \bydef \argmin_{0 \leq \alpha \leq A} \phi(\alpha).
        \) 
        For $0<\epsilon\leq \max(\alpha^{*},A-\alpha^{*})$ 
       we have
        \begin{equation}
            \prob \left[ \abs{\alpha^*_\phi - \alpha^*} \geq \epsilon \right] 
            \leq\ \zeta\big(n,\tfrac{\omega(\epsilon)}{2}\big),
        \end{equation}
        with $\zeta(n,\epsilon) = \zeta(n,\epsilon;A,\delta)$ defined in Lemma \ref{lem:concentration2} and $\omega(\epsilon)$ defined in~\eqref{eq:DefOmega}.
\end{lemma}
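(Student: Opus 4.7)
The plan is to apply the Hjort--Pollard estimate (Lemma~\ref{le:hjortpollard}) with $f(\alpha) = \phi(\alpha;\vg,\vh)$ and $g(\alpha) = \kappa(\alpha)$ on the open set $S = (0,A+1)$ (extended trivially outside $[0,A]$), then bound the resulting probability using the uniform concentration already available from Corollary~\ref{cor:empiricalmaxbeta}.

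First I would verify the hypotheses of Lemma~\ref{le:hjortpollard}. For every fixed $(\beta,\vg,\vh)$, the map $\alpha \mapsto \phi(\alpha,\beta;\vg,\vh) = \beta\|\alpha \vg/\sqrt{n} - \ve_1\|_2 - (\alpha/\sqrt{n})\dist(\beta\vh,\|\cdot\|_{p^{*}}\le\lambda)$ is the sum of a convex function (norm of an affine function of $\alpha$) and a linear function of $\alpha$, hence convex. Therefore $\phi(\alpha) = \sup_{0\le\beta\le 1}\phi(\alpha,\beta;\vg,\vh)$ is convex in $\alpha$ as a supremum of convex functions. Next, by Lemma~\ref{le:ArgMinKappa}--Property~3, since $A>\alpha^{*}$ and $\lambda\leq t^{*}$, the function $\kappa(\alpha) = \sup_{0\le\beta\le 1}\kappa(\alpha,\beta)$ has $\alpha^{*}$ as its unique minimizer over $[0,A]$ (uniqueness follows from strong convexity established in the proof of Property~4). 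Thus Lemma~\ref{le:hjortpollard} applies and yields
\begin{equation*}
\prob\big\{|\alpha^{*}_{\phi} - \alpha^{*}| \geq \epsilon\big\}
\le
\prob\left\{\sup_{|\alpha - \alpha^{*}|\le \epsilon} |\phi(\alpha) - \kappa(\alpha)| \ \geq\ \tfrac{1}{2}\inf_{|\alpha - \alpha^{*}|=\epsilon}\bigl(\kappa(\alpha)-\kappa(\alpha^{*})\bigr)\right\}.
\end{equation*}

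Second I would plug in the deterministic lower bound on the right-hand infimum. By Lemma~\ref{le:ArgMinKappa}--Property~4, for $0 < \epsilon \leq \max(\alpha^{*},A-\alpha^{*})$,
\(
\inf_{|\alpha - \alpha^{*}|=\epsilon}(\kappa(\alpha)-\kappa(\alpha^{*})) \geq \omega(\epsilon),
\)
so the previous display is further bounded by
\(
\prob\bigl\{\sup_{|\alpha-\alpha^{*}|\leq \epsilon} |\phi(\alpha)-\kappa(\alpha)| \geq \omega(\epsilon)/2\bigr\}.
\)
Since the supremum over $\{|\alpha-\alpha^{*}|\leq\epsilon\}\cap[0,A]$ is dominated by the supremum over the entire $[0,A]$, this probability is in turn no larger than $\prob\bigl\{\sup_{0\le\alpha\le A} |\phi(\alpha)-\kappa(\alpha)| \geq \omega(\epsilon)/2\bigr\}$.

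Finally I would invoke Corollary~\ref{cor:empiricalmaxbeta}, Equation~\eqref{eq:concentration_of_max_beta}, with the parameter $\epsilon$ of the corollary set to $\omega(\epsilon)/2$. The applicability condition $\omega(\epsilon)/2 < 2$ is guaranteed by Lemma~\ref{le:ArgMinKappa}--Property~4, which asserts $\omega(\epsilon)\leq 1/2$. The corollary then delivers
\(
\prob\bigl\{\sup_{0\le\alpha\le A} |\phi(\alpha)-\kappa(\alpha)| \geq \omega(\epsilon)/2\bigr\} \leq \zeta\bigl(n, \omega(\epsilon)/2\bigr),
\)
completing the proof. The main subtlety is making sure that both ingredients match: the convexity required by Hjort--Pollard (which only concerns $\alpha$, so the non-convexity of the $\max$-min problem in $(\alpha,\beta)$ is not an obstacle), and the admissibility condition $\omega(\epsilon)/2 \in (0,2)$ needed to invoke Corollary~\ref{cor:empiricalmaxbeta}, both of which are handled by Lemma~\ref{le:ArgMinKappa}. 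All remaining work is bookkeeping.
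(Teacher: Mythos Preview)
Your proposal is correct and follows essentially the same approach as the paper: apply Lemma~\ref{le:hjortpollard} with $f=\phi$ and $g=\kappa$, use the lower bound $\omega(\epsilon)$ from Lemma~\ref{le:ArgMinKappa}--Property~4, enlarge the supremum to all of $[0,A]$, and conclude via Corollary~\ref{cor:empiricalmaxbeta} after checking $\omega(\epsilon)/2<2$. The only cosmetic difference is that you spell out the convexity verification and the open-set technicality for Hjort--Pollard, which the paper leaves implicit.
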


\begin{proof}
Since $0<\epsilon\leq \max(\alpha^{*},A-\alpha^{*})$ the set $\set{\alpha: 0 \leq \alpha \leq A,\ \abs{\alpha-\alpha^{*}} \leq \epsilon}$ is a non-empty subset of $[0,\ A]$, and 
\(
\sup_{\alpha: 0 \leq \alpha \leq A,\  \abs{\alpha - \alpha^{*}} \leq \epsilon} \abs{\phi(\alpha) - \kappa(\alpha)}
\leq 
\sup_{0 \leq \alpha \leq A} \abs{\phi(\alpha) - \kappa(\alpha)}.
\)
Since $\phi$ is a random convex function, we can apply Lemma~\ref{le:hjortpollard} to obtain
\begin{eqnarray*}
\prob \left\{ \abs{\alpha^{*}_{\phi}-\alpha^{*}} \geq \epsilon\right\}
 \leq 
\prob \left\{ \sup_{\alpha: 0 \leq \alpha \leq A,\ \abs{\alpha - \alpha^{*}} \leq \epsilon} \abs{\phi(\alpha) - \kappa(\alpha)} \geq  
\tfrac{\omega(\epsilon)}{2} \right\}
    & \leq &
    \prob \left\{ \sup_{0 \leq \alpha \leq A} \abs{\phi(\alpha) - \kappa(\alpha)} \geq \tfrac{\omega(\epsilon)}{2} \right\}\\
    & \leq & \zeta(n,\omega(\epsilon)/2),
\end{eqnarray*}
where we used that $\omega(\epsilon) \leq \inf_{\alpha: 0 \leq \alpha \leq A,\ \abs{\alpha-\alpha^{*}}=\epsilon} \kappa(\alpha)-\kappa(\alpha^{*})$, and the last inequality follows from Corollary~\ref{cor:empiricalmaxbeta} which we can use since $\omega(\epsilon)/2 \leq 1/4 < 2$. 
\end{proof}
  
\begin{lemma}
    \label{lem:phi_eps_separation}
    Let $A > \alpha^{*}$, $\lambda \leq t^{*}$ with $\alpha^{*}$ defined as in Lemma~\ref{le:ArgMinKappa}--Equation~\eqref{eq:kappaminimizer} and $t^{*} = t^{*}_{p}(\delta;n)$ as in Lemma~\ref{lem:Dp_det_prop}--Property 7. 
    For $0 \leq \epsilon\leq \max(\alpha^{*},A-\alpha^{*})$, consider the optimal cost of the auxiliary optimization~\eqref{eq:aux_dual_swap} with an altered order of minimization and
    maximization and $\norm{\vz}_{2}$ further restricted to be at least at distance $\epsilon$ from $\alpha^{*}$
        \begin{equation}
        \label{eq:opt_for_phi_eps}
        \phi_{\epsilon}(\vg, \vh) \bydef \max_{\substack{\beta : 0 \leq \beta \leq 1\\\vw:\norm{\vw}_{p^*} \leq 1}} \min_{\substack{\vz : \norm{\vz}_2 \leq A\\\norm{\vz}_2 \notin (\alpha^* - \epsilon, \alpha^* + \epsilon)}}  \max_{\vu:\norm{\vu}_2 = \beta} \tfrac{1}{\sqrt{n}} \norm{\vz}_2 \vg^\T \vu - \tfrac{1}{\sqrt{n}} \norm{\vu}_2 \vh^\T \vz 
    - \vu^\T \ve_1 + \tfrac{\lambda}{\sqrt{n}} \vw^\T \vz.
    \end{equation}
 With $\vg \sim \mathcal{N}(\vzero, \mI_m)$, $\vh \sim \mathcal{N}(\vzero, \mI_n)$, $1 \leq m < n$, $\delta = (m-1)/n$ we have
for any $0<\eta<2$: 
\begin{equation}
  \phi_{0}(\vg,\vh)  <  \kappa(\alpha^{*}) + \eta \quad \quad \text{and} \quad \quad
  \phi_\epsilon(\vg, \vh)  >  \kappa(\alpha^{*}) + \omega(\epsilon) - \eta,
\end{equation}
with probability at least $1-\zeta(n, \eta)$, where $\zeta(n,\epsilon) = \zeta(n,\epsilon;A,\delta)$ from~Lemma \ref{lem:concentration2}, $\omega(\epsilon)$ from~\eqref{eq:DefOmega}.
\end{lemma}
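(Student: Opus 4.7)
The plan is to reduce both $\phi_0(\vg,\vh)$ and $\phi_\epsilon(\vg,\vh)$ to scalar optimizations of the form $\phi_S(\vg,\vh) = \inf_{\alpha \in S}\sup_{\beta \in [0,1]} \phi(\alpha,\beta;\vg,\vh)$ introduced in Corollary~\ref{cor:empiricalmaxbeta}, and then invoke its uniform concentration of $\phi_S$ around $\kappa_S$ together with the deterministic structural bounds on $\kappa$ supplied by Lemma~\ref{le:ArgMinKappa}.

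First I would process $\phi_0$. Writing $\min_{\norm{\vz}_2 \leq A} = \min_{0 \leq \alpha \leq A}\min_{\norm{\vz}_2 = \alpha}$ and following exactly the chain of simplifications leading to~\eqref{eq:opt_two_scalars}---resolve the inner $\max$ over $\vu$ on the sphere of radius $\beta$, then the inner $\min$ over $\vz$ on the sphere of radius $\alpha$---yields
\[
\phi_0(\vg,\vh) = \max_{\substack{0 \leq \beta \leq 1\\\norm{\vw}_{p^{*}} \leq 1}} \min_{0 \leq \alpha \leq A}\Bigl[\beta\norm{\alpha\vg/\sqrt{n} - \ve_1}_2 - (\alpha/\sqrt{n})\norm{\beta\vh - \lambda\vw}_2\Bigr].
\]
The bracketed objective is convex in $\alpha$ and jointly concave in $(\beta,\vw)$ on convex compact domains, so Sion's theorem~\cite[Corollary 3.3]{Sion:1958jm} swaps $\min$ and $\max$; after the inner $\vw$-minimization collapses into the distance to $\set{\norm{\,\cdot\,}_{p^{*}} \leq \lambda}$, one recognizes $\phi_0(\vg,\vh) = \phi_{[0,A]}(\vg,\vh)$.

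For $\phi_\epsilon$ I would run the same reduction, but the $\alpha$-domain is now $S_\epsilon \bydef \set{0 \leq \alpha \leq A : \abs{\alpha - \alpha^*} \geq \epsilon}$, which is \emph{not} convex; Sion's theorem therefore does not apply, and this is the main obstacle. Fortunately only a lower bound on $\phi_\epsilon$ is required, and for this weak duality $\max\min \geq \min\max$ is enough: applied to the max--min expression obtained after resolving $\max_{\vu}$ and $\min_{\vz: \norm{\vz}_2 = \alpha}$, it gives
\[
\phi_\epsilon(\vg,\vh) \;\geq\; \min_{\alpha \in S_\epsilon}\,\max_{\substack{0 \leq \beta \leq 1\\\norm{\vw}_{p^{*}} \leq 1}}[\cdots] \;=\; \phi_{S_\epsilon}(\vg,\vh),
\]
where again the inner $\vw$-optimization collapses into the distance that defines $\phi(\alpha,\beta;\vg,\vh)$.

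To finish, I would apply Corollary~\ref{cor:empiricalmaxbeta} with parameter $\eta \in (0,2)$: on a single event of probability at least $1-\zeta(n,\eta)$ the bound $\sup_{S \subseteq [0,A]}\abs{\phi_S(\vg,\vh) - \kappa_S} < \eta$ holds. Specializing to $S = [0,A]$ and invoking Lemma~\ref{le:ArgMinKappa}--item~3 (so that $\kappa_{[0,A]} = \kappa(\alpha^*)$) yields $\phi_0(\vg,\vh) < \kappa(\alpha^*) + \eta$. Specializing to $S = S_\epsilon$ and invoking Lemma~\ref{le:ArgMinKappa}--item~4 (which gives $\kappa_{S_\epsilon} \geq \kappa(\alpha^*) + \omega(\epsilon)$), together with $\phi_\epsilon \geq \phi_{S_\epsilon}$ established above, yields $\phi_\epsilon(\vg,\vh) > \kappa(\alpha^*) + \omega(\epsilon) - \eta$. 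Both conclusions of the lemma then hold on the same event of probability at least $1-\zeta(n,\eta)$.
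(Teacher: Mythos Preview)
Your weak-duality step goes the wrong way. The max--min inequality reads $\sup_x \inf_y f \leq \inf_y \sup_x f$, not $\geq$. After your reduction you have
\[
\phi_\epsilon(\vg,\vh) \;=\; \max_{\substack{0\leq\beta\leq 1\\\norm{\vw}_{p^*}\leq 1}}\;\min_{\alpha \in S_\epsilon}\Bigl[\beta\norm{\alpha\vg/\sqrt{n} - \ve_1}_2 - (\alpha/\sqrt{n})\norm{\beta\vh - \lambda\vw}_2\Bigr],
\]
and weak duality delivers only the \emph{upper} bound $\phi_\epsilon \leq \min_{\alpha \in S_\epsilon}\max_{\beta,\vw}[\cdots] = \phi_{S_\epsilon}(\vg,\vh)$. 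Since the lemma asks for the \emph{lower} bound $\phi_\epsilon > \kappa(\alpha^*) + \omega(\epsilon) - \eta$, this inequality is useless; your argument fails precisely at the obstacle you identified.

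The paper gets around the nonconvexity not by weak duality but by splitting $S_\epsilon$ into its two convex components $S^-_\epsilon = [0,\alpha^*-\epsilon]$ and $S^+_\epsilon = [\alpha^*+\epsilon,A]$. On each piece the $\alpha$-domain is a convex interval, so the entire chain of simplifications leading to~\eqref{eq:opt_two_scalars}---including the Sion swap---applies to the restriction of~\eqref{eq:opt_for_phi_eps} to $\norm{\vz}_2 \in S^\pm_\epsilon$, yielding
\[
\phi_{S^\pm_\epsilon}(\vg,\vh) \;=\; \min_{\alpha \in S^\pm_\epsilon}\;\max_{0\leq\beta\leq 1}\phi(\alpha,\beta;\vg,\vh)
\]
with \emph{equality} rather than an inequality. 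The paper then writes $\phi_\epsilon = \min(\phi_{S^-_\epsilon},\phi_{S^+_\epsilon}) = \phi_{S_\epsilon}$ and finishes exactly as you do via Corollary~\ref{cor:empiricalmaxbeta} and Lemma~\ref{le:ArgMinKappa}. The decomposition into convex pieces, so that Sion (strong duality) is available on each, is the idea your proof is missing.
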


\begin{proof}
Since $0<\epsilon\leq \max(\alpha^{*},A-\alpha^{*})$ the set $S_{\epsilon} \bydef \set{\alpha: 0 \leq \alpha \leq A,\ \abs{\alpha-\alpha^{*}} \leq \epsilon}$ is a non-empty subset of $[0,\ A]$. Denote  $S^-_{\epsilon} \bydef [0, \alpha^* - \epsilon]$ and $S^+_{\epsilon} = [\alpha^* + \epsilon, A]$ its two convex components (at most one of them may be empty).
Let $\phi_{S^+_{\epsilon}}(\vg, \vh)$ the value of
\eqref{eq:opt_for_phi_eps}, but with $\norm{\vz}_2$ constrained to lie in $S^{+}_{\epsilon}$ (by convention, this is $+\infty$ when $S_{\epsilon}^{+} = \emptyset$). Similarly define $\phi_{S^-_{\epsilon}}(\vg, \vh)$. When $S^+_{\epsilon}$ is non-empty, since it is convex, we can
effect the same simplifications and min-max swaps as in the proof of Lemma
\ref{lem:concentration_column} (from \eqref{eq:aux_dual_swap_first_line} to \eqref{eq:opt_two_scalars}) to arrive at
\begin{equation*}
  \phi_{S^+_{\epsilon}}(\vg,\vh) = \min_{\alpha : \alpha^* + \epsilon \leq \alpha \leq A} \max_{\beta:0 \leq \beta \leq 1} \phi(\alpha, \beta; \vg, \vh),
\end{equation*}
and similary with $S^{-}_{\epsilon}$ we get when it is non-empty that
\begin{equation*}
  \phi_{S^-_{\epsilon}}(\vg,\vh) = \min_{\alpha : 0 \leq \alpha \leq \alpha^* - \epsilon} \max_{\beta:0 \leq \beta \leq 1} \phi(\alpha, \beta; \vg, \vh).
\end{equation*}
This shows that $\phi_{\epsilon}(\vg,\vh) = \min(\phi_{S^-_{\epsilon}}(\vg,\vh),\phi_{S^+_{\epsilon}}(\vg,\vh)) = \phi_{S_{\epsilon}}(\vg,\vh)$ where the notation $\phi_{S_{\epsilon}}(\vg,\vh)$ matches that used in Corollary~\ref{cor:empiricalmaxbeta}.
Moreover by definition (see Lemma~\ref{le:ArgMinKappa}) we have 
\begin{equation*}
    \kappa_{S_\epsilon} \bydef \min_{\alpha \in S_{\epsilon}} \max_{0 \leq \beta \leq 1} \kappa(\alpha, \beta) \geq \kappa(\alpha^{*}) + \omega(\epsilon).
\end{equation*} 
By  Corollary \ref{cor:empiricalmaxbeta} we have, for $0<\eta<2$, with probability at least $1-\zeta(n, \eta)$: for all $S \subset [0,A]$, $\abs{\phi_{S}(\vg,\vh)-\kappa_{S}} < \eta$. Specializing to $S = [0,A]$ and $S = S_{\epsilon}$ and combining the above yields 
\begin{align*}
    \phi_{0}(\vg,\vh) & = \phi_{[0,A]}(\vg,\vh) < \kappa_{[0,A]} + \eta = \kappa(\alpha^{*}) + \eta,\\
    \phi_{\epsilon}(\vg,\vh) & = \phi_{S_{\epsilon}}(\vg,\vh) > \kappa_{S_{\epsilon}}-\eta \geq \kappa(\alpha^*) + \omega(\epsilon) - \eta.
\end{align*}
\end{proof}

\begin{lemma}
    \label{lem:minmaxswap}
    Let $K > \alpha^{*}/\sqrt{n}$, $\lambda \leq t^{*}$ with $\alpha^{*}$ defined as in Lemma~\ref{le:ArgMinKappa}--Equation~\eqref{eq:kappaminimizer} and $t^{*} = t^{*}_{p}(\delta;n)$ as in Lemma~\ref{lem:Dp_det_prop}--Property 7.  Denote by $\wt{\vx}_K$ any optimal solution of \eqref{eq:po_bounded}.
    For $0 < \epsilon\leq \max(\alpha^{*},K\sqrt{n}-\alpha^{*})$ we have
    \[
        \prob \bigg\{ \abs{\norm{\wt{\vx}_K} - \tfrac{\alpha^*}{\sqrt{n}}} \geq \tfrac{\epsilon}{\sqrt{n}}  \bigg\}  \leq 4 \zeta\big(n, \tfrac{\omega(\epsilon)}{2}\big).
    \]
    with $\zeta(n,\epsilon)=\zeta(n,\epsilon;K\sqrt{n},\delta)$ defined in Lemma \ref{lem:concentration2} and $\omega(\epsilon)$ defined in~\eqref{eq:DefOmega}.
\end{lemma}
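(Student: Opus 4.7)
The plan is to combine the CGMT (Theorem~\ref{thm:cgmt}) with the deterministic separation of values provided by Lemma~\ref{lem:phi_eps_separation}. First I would introduce an auxiliary restricted PO: let $\Phi_0(\mA)$ be the optimal value of~\eqref{eq:po_bounded} and let $\Phi_\epsilon(\mA)$ be the value of the same optimization with the extra (non-convex) constraint $\sqrt{n}\norm{\vx}_2 \notin (\alpha^*-\epsilon,\alpha^*+\epsilon)$. One always has $\Phi_\epsilon \geq \Phi_0$; and when the bad event $\{\abs{\sqrt{n}\norm{\wt{\vx}_K}_2 - \alpha^*} \geq \epsilon\}$ occurs, the minimizer $\wt{\vx}_K$ is itself feasible for the restricted problem, forcing $\Phi_\epsilon = \Phi_0$. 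Hence the probability I want to bound is at most $\prob\{\Phi_\epsilon = \Phi_0\}$, and for any threshold $c$ this event is contained in $\{\Phi_0 \geq c\}\cup\{\Phi_\epsilon \leq c\}$, since $\Phi_0<c<\Phi_\epsilon$ would be incompatible with $\Phi_0=\Phi_\epsilon$.

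Next I would pick $c \bydef \kappa(\alpha^*) + \omega(\epsilon)/2$, the midpoint of the gap separated by Lemma~\ref{lem:phi_eps_separation} with $\eta=\omega(\epsilon)/2$, and invoke the CGMT separately on each tail. The unrestricted problem $\Phi_0$ has compact convex feasible sets and convex--concave residual objective $\psi(\vx,\vu)=\lambda\norm{\vx}_p-\vu^\T\ve_1$, so Part~(ii) of Theorem~\ref{thm:cgmt} gives $\prob[\Phi_0>c] \leq 2\prob[\hat{\Phi}_0\geq c]$, where $\hat{\Phi}_0$ is the corresponding AO; convexity in $\vx$ lets me reorder min and max via Sion's theorem exactly as in~\eqref{eq:aux_dual_swap}--\eqref{eq:opt_two_scalars}, identifying $\hat{\Phi}_0$ with $\phi_0$ of Lemma~\ref{lem:phi_eps_separation} (the $\epsilon=0$ case). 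For $\Phi_\epsilon$ the $\vx$-set is compact but not convex, so Part~(ii) is unavailable; Part~(i) does not need convexity and yields $\prob[\Phi_\epsilon<c]\leq 2\prob[\hat{\Phi}_\epsilon \leq c]$, and the one-sided min--max inequality $\hat{\Phi}_\epsilon \geq \phi_\epsilon$ further reduces this to $\prob[\hat{\Phi}_\epsilon \leq c] \leq \prob[\phi_\epsilon \leq c]$.

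The final step is to plug the two deterministic bounds from Lemma~\ref{lem:phi_eps_separation} (with $\eta=\omega(\epsilon)/2$, admissible since $\omega(\epsilon)\leq 1/2$ by Lemma~\ref{le:ArgMinKappa}--4) into each tail, giving $\prob[\phi_0 \geq c] \leq \zeta(n,\omega(\epsilon)/2)$ and $\prob[\phi_\epsilon \leq c] \leq \zeta(n,\omega(\epsilon)/2)$, and then union-bound:
\[
\prob\{\Phi_\epsilon = \Phi_0\} \;\leq\; \prob\{\Phi_0 \geq c\} + \prob\{\Phi_\epsilon \leq c\} \;\leq\; 4\zeta\big(n,\omega(\epsilon)/2\big),
\]
which is exactly the claim (boundary issues between $>$ and $\geq$ can be absorbed by a vanishing perturbation of $c$, since the deterministic bounds of Lemma~\ref{lem:phi_eps_separation} are strict).

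The main obstacle is handling the non-convex feasible set of $\Phi_\epsilon$: it precludes a symmetric use of Part~(ii) of the CGMT and breaks strong duality between $\hat{\Phi}_\epsilon$ and its max--min swap $\phi_\epsilon$. What saves the argument is that the only inequality we are left with, $\hat{\Phi}_\epsilon \geq \phi_\epsilon$, happens to point in the direction required to control the lower tail $\prob[\hat{\Phi}_\epsilon \leq c]$, which is precisely the tail we need to bound $\prob\{\Phi_\epsilon = \Phi_0\}$ from above.
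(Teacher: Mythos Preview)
Your proposal is correct and matches the paper's proof essentially step for step: both introduce the restricted optimal value $\Phi_\epsilon$, apply CGMT part~(i) to it (non-convex feasible set) and part~(ii) to $\Phi_0$ (convex), relate the AOs to the quantities $\phi_\epsilon,\phi_0$ of Lemma~\ref{lem:phi_eps_separation} via the one-sided min--max inequality (and Sion for $\epsilon=0$), and then use the separation bound with $\eta$ close to $\omega(\epsilon)/2$. The only cosmetic differences are that the paper works with two thresholds $c_1=\kappa(\alpha^*)+\omega(\epsilon)-\eta$, $c_2=\kappa(\alpha^*)+\eta$ and the complementary ``good'' event $\{\Phi_\epsilon\geq c_1\}\cap\{\Phi_0\leq c_2\}$, then sends $\eta\to\omega(\epsilon)/2$ by continuity of $\zeta$, whereas you use a single midpoint threshold and absorb the $>$ versus $\geq$ mismatch by perturbing $c$; both limiting arguments are equivalent.
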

\begin{proof}
    Denote $\Phi$ the optimal cost of
    \eqref{eq:po_bounded} and $\Phi_\epsilon$ the corresponding cost when
    $\vz = \vx\sqrt{n}$ is further restricted to $\setS_{\epsilon} \bydef \set{\vz \ : \norm{\vz}_{2} \leq A\ \text{and}\ \abs{\norm{\vz}_2 - \alpha^\star} \geq \epsilon}$, with $A \bydef K\sqrt{n}$:
    \begin{equation}
 \label{eq:value_po_bounded}
  \Phi_{\epsilon} = \min_{\substack{\vx : \norm{\vx}_2 \leq K,\\ \abs{\sqrt{n}\norm{\vx}_2-\alpha^{*}} \geq \epsilon}} \max_{\vu: \norm{\vu}_2 \leq 1} \vu^\T \mA
 \vx + \lambda \norm{\vx}_p - \vu^\T \ve_1.
\end{equation}
    We now want to show that for $\epsilon>0$ we have with high probability $\Phi_{\epsilon} > \Phi = \Phi_{0}$, because this is equivalent
    to $\wt{\vz}_K = \wt{\vx}_K \sqrt{n} \in \setS'_{\epsilon} \bydef \set{\vz \ : \norm{\vz}_{2} \leq A\ \text{and}\ \abs{\norm{\vz}_2 - \alpha^*} < \epsilon}$.
    
    By Theorem~\ref{thm:cgmt}~\ref{thmpart:1}~and~\ref{thmpart:cgmt_optval_concentrate}, denoting $\phi^{P}_{\epsilon}$ (resp. $\phi^{D}_{\epsilon}$) the optimum value of the ``primal'' (resp. ``dual'') auxiliary optimization problem associated to~\eqref{eq:value_po_bounded},
       we have for any $c \in \R$ 
    \begin{equation}\label{eq:minmaxswap_step1}
        \prob \{\Phi_{\epsilon} < c\} \leq 2 \prob \{\phi^P_{\epsilon} \leq c\} 
        \quad \text{and} \quad
        \prob \{\Phi_{0} > c\} \leq 2 \prob \{\phi^D_{0} \geq c\},
    \end{equation}
where we additionally used that $\phi^D_{0} = \phi^P_{0}$ since we optimize over convex sets ($\setS_{0}$ is a convex ball) and the penalty $\psi(\vx,\vu) = \lambda \norm{\vx}_{p}-\vu^{\T}\ve_{1}$ is convex--concave (see, e.g., \cite[Corollary 3.3]{Sion:1958jm}).

Let $C(\vz, \vu, \vw) = C(\vz,\vu,\vz; \vg,\vh)$ be the objective function in
    \eqref{eq:aux_dual_swap} and~\eqref{eq:opt_for_phi_eps}, so that with $A \bydef K\sqrt{n}$ \eqref{eq:opt_for_phi_eps} becomes  
    \(
        \displaystyle
        \phi_{\epsilon} = \max_{\substack{\beta: 0 \leq \beta \leq 1\\\vw: \norm{\vw}_{p^*} \leq 1}} \min_{\substack{\vz : \norm{\vz}_2 \leq A\\ \vz \in \setS_{\epsilon}}} \max_{\vu: \norm{\vu}_2 = \beta} C(\vz, \vu, \vw),
    \)
    and the optimal cost of \eqref{eq:aux_dual_swap} reads $\phi = \phi_{0}$. With these notations, we have (since $\max \min \leq \min \max$ is always true):
    \begin{equation}
    \begin{aligned}
        \phi^P_{\epsilon} 
        &\bydef \min_{\substack{\vz: \norm{\vz}_2 \leq A\\\vz \in \setS_{\epsilon}}} \max_{\substack{\vu: \norm{\vu}_2 \leq 1, \\ \vw: \norm{\vw}_{p^*} \leq 1}} C(\vz, \vu, \vw) 
        = \min_{\substack{\vz: \norm{\vz}_2 \leq A\\\vz \in \setS_{\epsilon}}} \max_{\substack{\beta: 0 \leq \beta \leq 1, \\ \vw: \norm{\vw}_{p^*} \leq 1}} \max_{\vu: \norm{\vu}_2 = \beta} C(\vz, \vu, \vw) \\
        &\geq
        \max_{\substack{\beta: 0 \leq \beta \leq 1, \\ \vw: \norm{\vw}_{p^*} \leq 1}} \min_{\substack{\vz: \norm{\vz}_2 \leq A\\\vz \in \setS_{\epsilon}}} \max_{\vu: \norm{\vu}_2 = \beta} C(\vz, \vu, \vw) \ = \phi_{\epsilon}
    \end{aligned}
    \end{equation}
 and
    \begin{equation}
    \begin{aligned}
        \phi^D_{0}
        & \bydef  \max_{\substack{\vu: \norm{\vu}_2 \leq 1, \\ \vw: \norm{\vw}_{p^*} \leq 1}} \min_{\vz: \norm{\vz}_2 \leq A} C(\vz, \vu, \vw)
        = \max_{\substack{\beta: 0 \leq \beta \leq 1, \\ \vw: \norm{\vw}_{p^*} \leq 1}} \max_{\vu: \norm{\vu}_2 = \beta} \min_{\norm{\vz}_2 \leq A} C(\vz, \vu, \vw) \\
        &\leq
        \max_{\substack{\beta: 0 \leq \beta \leq 1, \\ \vw: \norm{\vw}_{p^*} \leq 1}} \min_{\vz: \norm{\vz}_2 \leq A} \max_{\vu: \norm{\vu}_2 = \beta} C(\vz, \vu, \vw) \ = \phi_{0}.
    \end{aligned}
    \end{equation}
    Denote $\underline{\phi}_{\epsilon} \bydef \kappa(\alpha^{*})+\omega(\epsilon)$ and $\bar{\phi} \bydef \kappa(\alpha^{*})$ and use the above with $c_{1} =  \underline{\phi}_{\epsilon}- \eta$ and $c_{2} =  \bar{\phi} + \eta$ where $\eta>0$ is arbitrary to get  
    \begin{equation}
    \begin{aligned}
        &\prob\{\Phi_{\epsilon} 
        <  \underline{\phi}_{\epsilon} - \eta\}
        &\leq &\ 2 \prob\{\phi^P_{\epsilon} \leq  \underline{\phi}_{\epsilon} - \eta\}
        &\leq&\ 2 \prob\{\phi_{\epsilon} \leq  \underline{\phi}_{\epsilon} - \eta\},\\
        &\prob\{\Phi > \bar{\phi} + \eta\}
        &\leq &\ 2 \prob\{\phi^D \geq \bar{\phi} + \eta\}
        &\leq&\ 2 \prob\{\phi_{0} \geq \bar{\phi} + \eta\}.
    \end{aligned}
    \end{equation}
    Consider the event ${\cal E} = \set{\Phi_{\epsilon} \geq
    \underline{\phi}_{\epsilon} - \eta \text{ and } \Phi \leq \bar{\phi} + \eta}$.
     For $0< \eta < (\underline{\phi}_{\epsilon}-\bar{\phi})/2 = \omega(\epsilon)/2$ we have $c_{1} > c_{2}$ hence this
    event implies that $\wt{\vz}_{K} \in \setS'_{\epsilon}$, which is what we wanted to prove. For such $\eta$, since $\omega(\epsilon)/2 \leq 1/4 <2$ we can use Lemma~\ref{lem:phi_eps_separation} and a union bound to obtain that this event happens with probability at least $1 - 4 \zeta(n, \eta)$. Hence, for any $0<\eta<\omega(\epsilon)/2$ we have
    \(
    \prob(\wt{\vz}_{K} \notin \setS_{\epsilon}) \leq 4\zeta(n,\eta).
    \)   
    By continuity of $\eta \mapsto \zeta(n,\eta)$ we take the limit when $\eta$ tends to $\omega(\epsilon)/2$. 
\end{proof}

\begin{lemma}
    \label{lemma:bounded_equals_unbounded}
    Let $K > \alpha^{*}/\sqrt{n}$, $\lambda \leq t^{*}$  with $\alpha^{*}$ defined as in Lemma~\ref{le:ArgMinKappa}--Equation~\eqref{eq:kappaminimizer} and $t^{*} = t^{*}_{p}(\delta;n)$ as in Lemma~\ref{lem:Dp_det_prop}--Property 7.  Denote by $\wt{\vx}_K$ any optimal solution of the random bounded problem \eqref{eq:po_bounded} and $\wt{\vx}$ any optimal solution of the random unbounded problem~\eqref{eq:lasso}.
    For $0 < \epsilon\leq \min(\alpha^{*},K\sqrt{n}-\alpha^{*})$ 
    we have 
    \[
      \prob \{ \wt{\vx}_K \neq \wt{\vx}\} \leq 
      4 \zeta\big(n, \tfrac{\omega(\epsilon)}{2}\big)
      \quad \text{and} \quad
      \prob \bigg\{ \abs{\norm{\wt{\vx}} - \tfrac{\alpha^*}{\sqrt{n}}} \geq \tfrac{\epsilon}{\sqrt{n}}  \bigg\}  \leq 4 \zeta\big(n, \tfrac{\omega(\epsilon)}{2}\big).
    \]
      with $\zeta(n,\epsilon)=\zeta(n,\epsilon;K\sqrt{n},\delta)$ defined in Lemma \ref{lem:concentration2} and $\omega(\epsilon)$ defined in~\eqref{eq:DefOmega}.
\end{lemma}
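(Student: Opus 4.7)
The plan is to use Lemma~\ref{lem:minmaxswap} to show that the ball constraint in the bounded problem~\eqref{eq:po_bounded} is \emph{strictly} inactive with high probability, so that the bounded and unbounded problems share the same optimizer.

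Applying Lemma~\ref{lem:minmaxswap} with the same $\epsilon$, introduce the good event
\[
 G \bydef \bigl\{\,\bigl|\sqrt{n}\,\norm{\wt{\vx}_K}_2 - \alpha^*\bigr| < \epsilon\,\bigr\},
\]
which satisfies $\prob(G) \geq 1 - 4\zeta(n,\omega(\epsilon)/2)$. On $G$ we have $\sqrt{n}\,\norm{\wt{\vx}_K}_2 < \alpha^* + \epsilon \leq K\sqrt{n}$, where the last inequality comes from the hypothesis $\epsilon \leq K\sqrt{n} - \alpha^*$. Thus $\norm{\wt{\vx}_K}_2 < K$ strictly, so $\wt{\vx}_K$ lies in the interior of the feasible ball of~\eqref{eq:po_bounded}.

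Now~\eqref{eq:po_bounded} is the restriction of~\eqref{eq:frob_variational} to a convex ball, and both objectives are convex. An interior minimizer of a convex objective on a convex feasible set with vanishing normal cone is automatically a global minimizer of the unconstrained problem: the first-order optimality condition transfers verbatim since the subdifferential contains $0$ at $\wt{\vx}_K$. Hence on $G$, $\wt{\vx}_K$ is optimal for the unbounded problem~\eqref{eq:lasso}. Uniqueness of that solution---which follows from strict convexity of $\vx\mapsto\norm{\vx}_p^p$ for $p>1$ combined with the equivalence~\eqref{eq:no_lasso}--\eqref{eq:lasso} supplied by Lemma~\ref{lem:lasso_satisfies_equality} with a suitable choice of $\lambda$, and almost surely for $p=1$ via Theorem~\ref{th:spinvmsparse}---then forces $\wt{\vx} = \wt{\vx}_K$ on $G$.

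Both conclusions follow at once. Since $\{\wt{\vx}\neq \wt{\vx}_K\}\subseteq G^{c}$, the first bound is immediate; and on $G$ we also have $\sqrt{n}\,\norm{\wt{\vx}}_2 = \sqrt{n}\,\norm{\wt{\vx}_K}_2$ lying within $\epsilon$ of $\alpha^*$, so $\{|\sqrt{n}\,\norm{\wt{\vx}}_2 - \alpha^*|\geq \epsilon\}\subseteq G^{c}$ yields the second bound with the same probability. The only delicate point is securing the \emph{strict} interiority $\norm{\wt{\vx}_K}_2 < K$---any relaxation to $\leq$ would not suffice for the convexity argument to pass the optimality condition to the unbounded problem---and this is precisely what the strict deviation in the definition of $G$ together with the hypothesis $\epsilon\leq K\sqrt{n}-\alpha^*$ delivers.
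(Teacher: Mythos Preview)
Your argument is essentially the paper's: apply Lemma~\ref{lem:minmaxswap} to get $\norm{\wt{\vx}_K}_2 < K$ with the stated probability, then use convexity to transfer optimality from the bounded to the unbounded problem. The paper does the convexity step via a line-segment/contradiction argument, you do it via interior first-order optimality; these are equivalent and both correct.

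The one wrinkle is your uniqueness detour. Your justification routes through Lemma~\ref{lem:lasso_satisfies_equality}, but that lemma carries its own hypothesis $\lambda < \tfrac{\sqrt{n}-\sqrt{m}}{L}(1-\epsilon')$, which is \emph{not} implied by the present assumption $\lambda \leq t^{*}$, so you cannot invoke it here ``with a suitable choice of $\lambda$'' without changing the statement. More to the point, uniqueness is unnecessary: the paper explicitly interprets $\wt{\vx}$ and $\wt{\vx}_K$ as the (possibly non-singleton) solution \emph{sets}, with ``$\wt{\vx}_K \neq \wt{\vx}$'' meaning the sets are disjoint. Your interior-minimizer argument already shows that on $G$ every bounded optimizer is an unbounded optimizer, so the sets intersect---which is exactly the required conclusion. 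Drop the uniqueness paragraph and the proof is clean.
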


\begin{proof}
To handle non-uniqueness, $\wt{\vx}$ (resp. $\wt{\vx}_{K}$) may denote the convex set of solutions of the respective convex optimization problems. The property $\wt{\vx} \neq \wt{\vx}_K$ then means that the sets do not intersect, and inequalities such as $f(\wt{\vx}) > c$ are meant to hold for all elements of the set $\wt{\vx}$. 

We first prove, by contradiction, that if $\wt{\vx} \neq \wt{\vx}_K$, then necessarily $\norm{\wt{\vx}_{K}}_2 = K$.
Suppose that the opposite holds: $\wt{\vx} \neq \wt{\vx}_K$, but $\norm{\wt{\vx}_{K}}_2 < K$. Since $\wt{\vx} \neq \wt{\vx}_{K}$ we have $\norm{\wt{\vx}}_2 > K$. Denoting the objective in \eqref{eq:lasso} by $\phi(\vx)$, this means that $\phi(\wt{\vx}) < \phi(\wt{\vx}_K)$. By convexity of $\phi$ it follows that all points on the line segment $\vx_\nu = \nu \wt{\vx}_K + (1 - \nu) \wt{\vx}$, $0\leq \nu \leq 1$ satisfy 
    \begin{equation}
        \label{eq:lemma_bounded_is_ok_cvx}
        \phi(\vx_\nu) \leq \phi(\wt{\vx}_K).         
    \end{equation}   
Since $\norm{\vx_{0}}_2 > K$ and $\norm{\vx_{1}}_2 < K$, by continuity there exists $\nu \in (0,1)$ such that $\norm{\vx_\nu}_2 = K$. Further, by \eqref{eq:lemma_bounded_is_ok_cvx}, $\vx_\nu$ is optimizing the bounded problem \eqref{eq:po_bounded}, contradicting our assumption. 

By contraposition, if $\norm{\wt{\vx}_{K}}_2 < K$ then $\wt{\vx} = \wt{\vx}_{K}$. In particular, since we assume $K\sqrt{n}> \alpha^{*}+\epsilon$, we have: 
if  $\abs{\sqrt{n}\norm{\wt{\vx}_K}_2-\alpha^{*}} < \epsilon$ then $\norm{\wt{\vx}_{K}}_2 < K$ 
hence $\wt{\vx} = \wt{\vx}_{K}$ and $\abs{\sqrt{n}\norm{\wt{\vx}}_2-\alpha^{*}} = \abs{\sqrt{n}\norm{\wt{\vx}_K}_2-\alpha^{*}} < \epsilon$.
From here, it follows that $\prob\{ \wt{\vx}_K \neq \wt{\vx}\} \leq \prob \{ \abs{\sqrt{n}\norm{\wt{\vx}_K}_2-\alpha^{*}} \geq \epsilon\}$ and
\(
\prob \{ \abs{\sqrt{n}\norm{\wt{\vx}}_2-\alpha^{*}} \geq \epsilon\} 
\leq
\prob \{ \abs{\sqrt{n}\norm{\wt{\vx}_K}_2-\alpha^{*}} \geq \epsilon\}.
\)
We conclude using Lemma~\ref{lem:minmaxswap}.
\end{proof}

\begin{lemma}
    \label{lem:sqrtn_lipschitz}
    With $\Delta_{p}$ defined as in the statement of Lemma~\ref{lem:concentration2}, the function $f_{\vh}(\beta) \bydef \abs{
    \Delta_p(\beta; \vh,\lambda) - \Delta_p(\beta;n,\lambda)}$ is $\max \set{\norm{\vh}/\sqrt{n},1}$-Lipschitz in $\beta$.
\end{lemma}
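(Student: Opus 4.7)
The plan is to establish Lipschitz and monotonicity properties of $\Delta_{p}(\beta;\vh,\lambda)$ and $\Delta_{p}(\beta;n,\lambda)$ separately, and then exploit that both are non-decreasing to replace the naive sum bound $\|\vh\|/\sqrt{n}+1$ with the desired $\max\{\|\vh\|/\sqrt{n},1\}$.

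First I would show that $\phi(\beta) \bydef \Delta_{p}(\beta;\vh,\lambda) = \tfrac{1}{\sqrt{n}}\dist(\beta\vh,\mathcal{C}_{\lambda,p})$ is $(\|\vh\|_{2}/\sqrt{n})$-Lipschitz in $\beta$. This is immediate from the fact that $\vx \mapsto \dist(\vx,\mathcal{C})$ is $1$-Lipschitz (for any convex set $\mathcal{C}$) and the linearity of $\beta \mapsto \beta\vh$. Taking expectation gives that $\psi(\beta) \bydef \Delta_{p}(\beta;n,\lambda) = \E \phi(\beta)$ is $(\E\|\vh\|_{2}/\sqrt{n})$-Lipschitz, which is bounded by $1$ via Jensen, $\E\|\vh\|_{2} \leq \sqrt{\E\|\vh\|_{2}^{2}} = \sqrt{n}$.

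Next I would argue that both $\phi$ and $\psi$ are non-decreasing on $[0,\infty)$. Since $\mathcal{C}_{\lambda,p}$ is convex and contains the origin, for $0 \leq \beta_{1} \leq \beta_{2}$ one writes $\beta_{1}\vh = (\beta_{1}/\beta_{2})(\beta_{2}\vh) + (1-\beta_{1}/\beta_{2})\cdot \vec{0}$. If $\vu^{\ast} \bydef \proj_{\mathcal{C}_{\lambda,p}}(\beta_{2}\vh)$, then by convexity $(\beta_{1}/\beta_{2})\vu^{\ast} \in \mathcal{C}_{\lambda,p}$, so
\[
\dist(\beta_{1}\vh,\mathcal{C}_{\lambda,p}) \leq \|\beta_{1}\vh - (\beta_{1}/\beta_{2})\vu^{\ast}\|_{2} = (\beta_{1}/\beta_{2})\dist(\beta_{2}\vh,\mathcal{C}_{\lambda,p}) \leq \dist(\beta_{2}\vh,\mathcal{C}_{\lambda,p}).
\]
Hence $\phi$ is non-decreasing, and taking expectation shows that $\psi$ is too.

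Finally I would combine these facts. For $\beta_{1} \leq \beta_{2}$, both increments $\phi(\beta_{2})-\phi(\beta_{1})$ and $\psi(\beta_{2})-\psi(\beta_{1})$ are non-negative and respectively bounded by $(\|\vh\|_{2}/\sqrt{n})(\beta_{2}-\beta_{1})$ and $(\beta_{2}-\beta_{1})$. Since for any two non-negative reals $a,b$ one has $|a-b| \leq \max(a,b)$,
\[
\bigl|[\phi(\beta_{2})-\psi(\beta_{2})] - [\phi(\beta_{1})-\psi(\beta_{1})]\bigr| \leq \max\bigl\{\|\vh\|_{2}/\sqrt{n},\,1\bigr\}\,|\beta_{2}-\beta_{1}|,
\]
and the standard reverse-triangle inequality $\bigl||x|-|y|\bigr|\leq |x-y|$ transfers this bound to $f_{\vh}(\beta) = |\phi(\beta)-\psi(\beta)|$, which is precisely the claim. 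The only real subtlety is the replacement of the sum of Lipschitz constants by their maximum, which crucially relies on the co-monotonicity of $\phi$ and $\psi$; without it, one would only obtain the weaker constant $\|\vh\|_{2}/\sqrt{n}+1$, which would degrade the union-bound argument in the proof of Lemma~\ref{lem:concentration2}.
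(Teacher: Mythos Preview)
Your proof is correct, and in fact more careful than the paper's. Both you and the paper establish that $\phi(\beta)=\Delta_p(\beta;\vh,\lambda)$ is $(\|\vh\|_2/\sqrt{n})$-Lipschitz and $\psi(\beta)=\Delta_p(\beta;n,\lambda)$ is $1$-Lipschitz, via the same one-line argument. The paper then concludes with the sentence ``the Lipschitz constant of the difference of two Lipschitz functions does not exceed the largest of the two constants,'' which is false as a general statement (take $f(x)=x$, $g(x)=-x$). Your argument supplies exactly the ingredient that makes this step work here: you prove that $\phi$ and $\psi$ are both non-decreasing on $[0,\infty)$ (using that $\mathcal{C}_{\lambda,p}$ is convex and contains the origin), so that the increments $a=\phi(\beta_2)-\phi(\beta_1)$ and $b=\psi(\beta_2)-\psi(\beta_1)$ are non-negative and $|a-b|\leq\max(a,b)$. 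This co-monotonicity is what turns the naive sum $\|\vh\|_2/\sqrt{n}+1$ into the claimed $\max\{\|\vh\|_2/\sqrt{n},1\}$, and you are right that without it the constant would be worse. So your route is the paper's route, but with the gap in the final sentence properly filled.
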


\begin{proof}
    We omit the dependency in $\lambda$ for brevity and write by definition 
    \[
        \abs{\Delta_p(\beta_1; \vh) - \Delta_p(\beta_2; \vh)}
        = \tfrac{1}{\sqrt{n}}\ \abs{\dist(\beta_1 \vh, {\cal C}) - \dist(\beta_2 \vh, {\cal C})}
        \leq \tfrac{1}{\sqrt{n}} \ \norm{\beta_1 \vh - \beta_2 \vh}_2 \leq \tfrac{\norm{\vh}_2}{\sqrt{n}} \abs{\beta_1 - \beta_2},
    \]
    since the Euclidean distance to a convex set is 1-Lipschitz with respect to the Euclidean metric. Further, 
    \(
        \abs{\Delta_p(\beta_1) - \Delta_p(\beta_2)} 
        \leq \E \abs{ (\Delta_p(\beta_1; \vh)  - \Delta_p(\beta_2; \vh))}
        \leq \tfrac{\abs{\beta_1 - \beta_2}}{\sqrt{n}}\ \E \norm{\vh}_2
        \leq \abs{\beta_1 - \beta_2}.
    \)
    Now observe that the Lipschitz constant of the difference of two Lipschitz functions does not exceed the largest of the two constants, and the Lipschitz constant of $|f|$ equals that of $f$.
\end{proof}

\bibliographystyle{siamplain}

\bibliography{pseudo,pseudo_supp}

\end{document}